\newtheorem{theo}{Theorem}[section]
\newtheorem{lemm}[theo]{Lemma}
\newtheorem{prop}[theo]{Proposition}
\theoremstyle{definition}
\newtheorem{rem}[theo]{Remark}
\newtheorem{assum}{Assumption}
\newcommand{\bP}{\mathbb{P}}
\newcommand{\bE}{\mathbb{E}}
\newcommand{\bF}{\mathbb{F}}
\newcommand{\bR}{\mathbb{R}}
\newcommand{\bN}{\mathbb{N}}
\newcommand{\bX}{\mathbb{X}}
\newcommand{\bY}{\mathbb{Y}}
\newcommand{\bZ}{\mathbb{Z}}
\newcommand{\cF}{\mathcal{F}}
\newcommand{\cT}{\mathcal{T}}
\newcommand{\cB}{\mathcal{B}}
\newcommand{\cM}{\mathcal{M}}
\newcommand{\cU}{\mathcal{U}}
\newcommand{\cX}{\mathcal{X}}
\newcommand{\cY}{\mathcal{Y}}
\newcommand{\cZ}{\mathcal{Z}}
\newcommand{\sL}{\mathscr{L}}
\newcommand{\rd}{\,\mathrm{d}}
\newcommand{\relmiddle}[1]{\mathrel{}\middle#1\mathrel{}}
\newcommand{\1}{\mbox{\rm{1}}\hspace{-0.25em}\mbox{\rm{l}}}
\newcommand{\comp}{\mathrm{c}}
\newcommand{\CD}{^\mathrm{C}\!D^\alpha_{0+}}
\newcommand{\res}{|\hspace{-0.2mm}|\hspace{-0.2mm}|}
\providecommand{\keywords}[1]{\textbf{Keywords:} #1}
\def\widebar{\accentset{{\cc@style\underline{\mskip10mu}}}}
\numberwithin{equation}{section}
\def\rnum#1{\expandafter{\romannumeral #1}} 
\def\Rnum#1{\uppercase\expandafter{\romannumeral #1}}
\title{On the maximum principle for optimal control problems of stochastic Volterra integral equations with delay\footnote{This paper is accepted for publication in {\it Applied Mathematics and Optimization} following peer review.}}
\author{
	Yushi Hamaguchi\thanks{
	Graduate School of Engineering Science, Department of Systems Innovation,
	Osaka University.
	1-3, Machikaneyama, Toyonaka, Osaka, Japan.
	\href{mailto:hmgch2950@gmail.com}{hmgch2950@gmail.com}}
}
\begin{document}
\maketitle

\begin{abstract}
In this paper, we prove both necessary and sufficient maximum principles for infinite horizon discounted control problems of stochastic Volterra integral equations with finite delay and a convex control domain. The corresponding adjoint equation is a novel class of infinite horizon anticipated backward stochastic Volterra integral equations. Our results can be applied to discounted control problems of stochastic delay differential equations and fractional stochastic delay differential equations. As an example, we consider a stochastic linear-quadratic regulator problem for a delayed fractional system. Based on the maximum principle, we prove the existence and uniqueness of the optimal control for this concrete example and obtain a new type of explicit Gaussian state-feedback representation formula for the optimal control.
\end{abstract}
\keywords
Maximum principles;
stochastic delay Volterra integral equations;
anticipated backward stochastic Volterra integral equations;
fractional stochastic delay differential equations;
Gaussian state-feedback representation formula.
\\
\textbf{2020 Mathematics Subject Classification}: 93E20; 49K45; 60H20; 26A33.




\section{Introduction}

In this paper, we are interested in the Pontryagin's maximum principle for a general class of infinite horizon optimal control problems with the state dynamics given by
\begin{equation}\label{1: state}
	\begin{dcases}
	X^u(t)=\varphi(t)+\int^t_0b(t,s,X^u(s),X^u(s-\delta),u(s))\rd s+\int^t_0\sigma(t,s,X^u(s),X^u(s-\delta),u(s))\rd W(s),\ t\geq0,\\
	X^u(t)=\varphi(t),\ t\in[-\delta,0],
	\end{dcases}
\end{equation}
where $W(\cdot)$ is a multi-dimensional Brownian motion, $\delta\geq0$ is a given constant, $b$ and $\sigma$ are given deterministic maps, $\varphi(\cdot)$ is a given adapted process, and $u(\cdot)$ is a control process which takes vlaues in a convex subset of a Euclidean space. The controlled equation \eqref{1: state} is a stochastic Volterra integral equation (SVIE, for short) which has a ``finite delay'' of the form $X^u(s-\delta)$, and thus we call it a \emph{stochastic delay Volterra integral equation} (SDVIE, for short). Our objective is to find a control process which minimizes the discounted cost functional
\begin{equation}\label{1: cost}
	J_\lambda(u(\cdot))=\bE\Bigl[\int^\infty_0e^{-\lambda t}h(t,X^u(t),X^u(t-\delta),u(t))\rd t\Bigr],
\end{equation}
where $h$ is a real-valued deterministic function, and $\lambda\in\bR$ is a given discount rate.

If the coefficients $b(t,s,x_1,x_2,u)$ and $\sigma(t,s,x_1,x_2,u)$ do not depend on the time-parameter $t$, and if the free term is of the form $\varphi(t)=\varphi(t\wedge0)$, then SDVIE~\eqref{1: state} is reduced to a stochastic delay differential equation (SDDE, for short)
\begin{equation}\label{1: SDDE_state}
	\begin{dcases}
	\mathrm{d}X^u(t)=b(t,X^u(t),X^u(t-\delta),u(t))\rd t+\sigma(t,X^u(t),X^u(t-\delta),u(t))\rd W(t),\ t\geq0,\\
	X^u(t)=\varphi(t),\ t\in[-\delta,0].
	\end{dcases}
\end{equation}
If furthermore $\delta=0$, then the above SDDE becomes the well-known stochastic differential equation (SDE, for short) without delay. More importantly, SDVIE~\eqref{1: state} includes a class of fractional-order SDDEs of the form
\begin{equation}\label{1: FSDDE_state}
	\begin{dcases}
	\CD X^u(t)=b(t,X^u(t),X^u(t-\delta),u(t))+\sigma(t,X^u(t),X^u(t-\delta),u(t))\frac{\mathrm{d}W(t)}{\mathrm{d}t},\ t\geq0,\\
	X^u(t)=\varphi(t),\ t\in[-\delta,0],
	\end{dcases}
\end{equation}
where $\CD$ denotes the Caputo fractional derivative of order $\alpha\in(\frac{1}{2},1)$ defined by
\begin{equation*}
	\CD f(t):=\frac{1}{\Gamma(1-\alpha)}\frac{\mathrm{d}}{\mathrm{d}t}\int^t_0(t-s)^{-\alpha}\{f(s)-f(0)\}\rd s,\ t\geq0,
\end{equation*}
for suitable function $f:[0,\infty)\to\bR$. Here and elsewhere, $\Gamma(\alpha)=\int^\infty_0e^{-\tau}\tau^{\alpha-1}\rd\tau$ denotes the Gamma function. Indeed, by the definition (see \cite{ZhAgLiPeYoZh17,MoZhLoTeMo20}), an adapted process $X^u(\cdot)$ is called a solution to fractional SDDE \eqref{1: FSDDE_state} if it solves the equation
\begin{equation*}
	\begin{dcases}
	X^u(t)=\varphi(0)+\frac{1}{\Gamma(\alpha)}\int^t_0(t-s)^{\alpha-1}b(s,X^u(s),X^u(s-\delta),u(s))\rd s\\
	\hspace{2cm}+\frac{1}{\Gamma(\alpha)}\int^t_0(t-s)^{\alpha-1}\sigma(s,X^u(s),X^u(s-\delta),u(s))\rd W(s),\ t\geq0,\\
	X^u(t)=\varphi(t),\ t\in[-\delta,0].
	\end{dcases}
\end{equation*}
This equation can be seen as a controlled SDVIE with singular kernels in the sense that $\lim_{s\to t}(t-s)^{\alpha-1}=\infty$. Fractional differential systems are suitable tools to describe the dynamics of systems with memory effects and hereditary properties. There are many applications of fractional calculus in a variety of research fields including mathematical finance, physics, chemistry, biology, and other applied sciences. For detailed accounts of theory and applications of fractional calculus, see for example \cite{DaBa10,Di07,Ra10,SaKiMa87} and the references cited therein. In addition to the fractional derivative, if the state dynamics and/or the cost functional contain delay arguments, we face with an optimal control problem for a delayed fractional system, which can be used to model more realistic controlled dynamic systems which have memory, as well as delay hypotheses. In the deterministic case (that is, the case of $\sigma=0$ in \eqref{1: FSDDE_state}), Jajarmi and Baleanu~\cite{JaBa18} studied optimal control problems for a class of fractional-order dynamic systems with finite delay, which can be seen as a special case of our study. Recently, Zhang et al.~\cite{ZhAgLiPeYoZh17} and Moghaddam et al.~\cite{MoZhLoTeMo20} studied the fractional SDDE~\eqref{1: FSDDE_state} (without control) and proved the existence and uniqueness of the solution. However, to the best of our knowledge, there have been no studies on optimal control problems for fractional SDDEs. The analysis of such a stochastic control problem is therefore a natural and important topic in views of both theory and applications, and this is the main motivation of this paper.

It is well-known that the maximum principle is an important approach in solving optimal control problems (see the textbook~\cite{YoZh99} and references cited therein). In 1956, Boltyanski, Gamkrelidze, and Pontryagin~\cite{BoGaPo56} proposed the Pontryagin's maximum principle for the first time for deterministic control systems. Bismut~\cite{Bi78} and Peng~\cite{Pe90} studied stochastic control problems of SDEs and derived the maximum principle. Since then, many researchers have tried to generalize and refine the maximum principle in various classes of deterministic and stochastic control problems. Here, let us briefly review the related works. Bergounioux and Bourdin~\cite{BeBo20} investigated the necessary maximum principle for finite horizon deterministic Caputo fractional optimal control problems with terminal constraints. Lin and Yong~\cite{LiYo20} proved the necessary maximum principle for finite horizon control problems of deterministic singular Volterra equations and applied it to deterministic fractional differential systems. Yong~\cite{Yo06,Yo08} and Wang~\cite{WaT20} obtained the necessary maximum principle for finite horizon control problems of SVIEs. Chen and Wu~\cite{ChWu10} and {\O}ksendal, Sulem, and Zhang~\cite{OkSuZh11} showed the necessary maximum principle for finite horizon control problems of SDDEs. In the infinite horizon setting, Maslowski and Veverka~\cite{MaVe14} and Orrieri and Veverka~\cite{OrVe17} established necessary and sufficient maximum principles for SDEs with dissipative coefficients, respectively. Lastly, in our previous work~\cite{Ha21+}, we proved both necessary and sufficient maximum principles for infinite horizon discounted control problems of SVIEs.

The most important step in deriving maximum principles is the introduction of the \emph{adjoint equation} which corresponds to the variation of the state equation by means of a duality relation. Loosely speaking, in the finite horizon control problems of SDEs~\cite{Bi78,Pe90}, SDDEs~\cite{ChWu10,OkSuZh11}, and SVIEs~\cite{Yo06,Yo08,WaT20}, the corresponding adjoint equations become backward stochastic differential equations (BSDEs, for short), anticipated BSDEs (ABSDEs, for short), and backward stochastic Volterra integral equations (BSVIEs, for short), respectively. Alternatively, in the infinite horizon setting, the adjoint equations corresponding to SDEs~\cite{MaVe14,OrVe17} and SVIEs~\cite{Ha21+} turned out to be infinite horizon BSDEs and infinite horizon BSVIEs, respectively. The structure of the adjoint equation heavily relies on the underlying state dynamics. Therefore, in the literature, the methods deriving maximum principles have depended on the problems.

On the other hand, in our previous work~\cite{Ha21+}, we showed the applicability of the maximum principle for SVIEs to discounted control problems of several types of state dynamics such as SDEs, fractional SDEs, and stochastic integro-differential equations (see Section~4.3 in \cite{Ha21+}). We remark that a stochastic integro-differential equation can be seen as a dynamics with unbounded delay, and the delay terms are given by Lebesgue integrals of the past trajectories of states and controls. The corresponding adjoint equation turned out to be an infinite horizon ABSDE of Ito--Volterra type (or an infinite horizon backward stochastic integro-differential equation).

This paper is a continuation of our previous work \cite{Ha21+}. In this paper, we prove both necessary and sufficient maximum principles for infinite horizon discounted control problems of SDVIEs (with finite delay) and apply them to SDDEs and fractional SDDEs. To the best of our knowledge, maximum principles for fractional SDDEs and SDVIEs have not been obtained in the literature. Furthermore, our results on SDDEs generalize the frameworks of \cite{ChWu10,OkSuZh11} to the infinite horizon setting. It is remarkable that the adjoint equation corresponding to SDVIEs becomes a novel class of \emph{anticipated BSVIEs} (ABSVIEs, for short), which is beyond the class of ABSVIEs studied by Wen and Shi~\cite{WeSh20}. Surprisingly, all the above results can be obtained by using our previous results \cite{Ha21+} on SVIEs without delay. A simple but important idea is to ``lift up'' the dimension of the SDVIE so that the auxiliary state equation becomes a classical SVIE (without delay). This is a new feature of SVIEs which reveals an interesting difference from SDEs, since SDDEs cannot be changed into (finite dimensional) SDEs without delay. For more details, see \cref{sec: SDVIE}. A similar idea can be seen in Example 4.12 in \cite{Ha21+}. Therefore, combining our previous work~\cite{Ha21+} and the current paper, we see that the general results in \cite{Ha21+} provide a \emph{unified approach} to maximum principles for many kinds of state dynamics such as SDEs, SDDEs, fractional SDEs, fractional SDDEs, stochastic integro-differential equations, SVIEs, and SDVIEs. The corresponding adjoint equations can be obtained by appropriate transformations of BSVIEs. An interesting fact is that the maximum principle in \cite{Ha21+} was proved without It\^{o}'s formula or BSDE theory. This means that we can recover some known maximum principles for SDEs, SDDEs, and so on, without It\^{o}'s formula or BSDE theory.

Furthermore, in this paper, as an example of our general theory, we investigate an infinite horizon linear-quadratic (LQ, for short) regulator problem for a fractional SDDE with constant coefficients. This problem can be seen as a generalization of the (finite horizon) deterministic LQ delay fractional optimal control problem studied by Jajarmi and Baleanu~\cite{JaBa18} to the stochastic case. Based on the maximum principle, we show that there exists a unique optimal control for this problem. Moreover, we obtain an explicit state-feedback representation formula for the optimal control process, which is given by the following form:
\begin{equation*}
	(\text{optimal control})=(\text{constant})\times(\text{delayed optimal state})+(\text{Gaussian process}).
\end{equation*}
Here, the Gaussian process is a stochastic convolution of a deterministic function with respect to the Brownian motion, and the function is expressed by using a Fredholm resolvent of a kernel which is determined by the model parameters only. For more detailed assertions, see \cref{LQ: theo_characterization}. In this paper, we call the above expression a \emph{Gaussian state-feedback representation formula} for the optimal control. This kind of representation formula appears for the first time in the literature. For relevant studies on feedback representations of optimal controls, we refer to \cite{BoCoMa12,CoMa11,Ma19,AbMiPh19}. Bonaccorsi, Confortola, and Mastrogiacomo~\cite{BoCoMa12} and Confortola and Mastrogiacomo~\cite{CoMa11} studied finite horizon stochastic control problems for SVIEs with completely monotone kernels. They reformulated the state equations into infinite-dimensional correspondences and obtained the optimal feedback law by means of Hamilton--Jacobi--Bellman equations, together with the semigroup methods. These methods were extended to the infinite horizon setting by Mastrogiacomo~\cite{Ma19}. Also, Abi Jaber, Miller, and Pham~\cite{AbMiPh19} studied finite horizon LQ control problems for SVIEs with completely monotone kernels, and obtained a feedback formula for the optimal control by means of infinite-dimensional integral operator Riccati equations (see also \cite{AbMiPh21}). Compared to the aforementioned papers, we do not use infinite-dimensional calculus. In place, based on the maximum principle, we derive \emph{linear Fredholm integral equations} which characterize the optimal control law. By using the corresponding Fredholm resolvents, we can solve these equations and obtain the Gaussian state-feedback representation formula for the optimal control. Therefore, our method deriving the feedback formula is completely different from the aforementioned papers. We emphasize that, although our result relies on the special structure of the LQ regulator problem, the Gaussian state-feedback representation formula holds for the fractional dynamics with delay, which was not discussed in the literature.

We remark that the control domains considered in this paper and our previous work~\cite{Ha21+} are assumed to be convex. This assumption makes it possible to apply the convex variation to derive the maximum principle. On the other hand, if the control domain is non-convex, the convex variation method is no longer available, and we have to consider the spike variation instead. Recently, Wang~\cite{WaT20} and Wang and Yong~\cite{WaTYo22} studied the spike variation method for controlled SVIEs with non-convex control domains. Unfortunately, due to the regularity assumptions on the kernels, their results cannot be applied to SVIEs or SDVIEs with singular kernels such as fractional SDEs or fractional SDDEs. Thus, the theory of spike variations for SVIEs and SDVIEs with singular kernels and non-convex control domains is still not fully developed. We hope to report the relevant results in the near future.

The rest of this paper is organized as follows. In \cref{sec: SVIE}, we briefly recall the results of our earlier work \cite{Ha21+} on the maximum principles for SVIEs without delay. In \cref{sec: SDVIE}, we prove necessary and sufficient maximum principles for general SDVIEs. The main result is \cref{SDVIE: theo_MP}. In \cref{sec: SDDE}, we investigate discounted control problems for SDDEs and explain an idea to obtain the appropriate adjoint equation from an infinite horizon ABSVIE. In \cref{sec: FSDDE}, we apply our general results to discounted control problems of fractional SDDEs. In Subsection~\ref{subsec: FSDDE_LQ}, we investigate an infinite horizon LQ regulator problem for a fractional SDDE. The main result in this subsection is \cref{LQ: theo_characterization}.


\subsection*{Notation}


For each $d_1,d_2\in\bN$, we denote the space of $(d_1\times d_2)$-matrices by $\bR^{d_1\times d_2}$, which is endowed with the Frobenius norm denoted by $|\cdot|$. We define $\bR^{d_1}:=\bR^{d_1 \times 1}$, that is, each element of $\bR^{d_1}$ is understood as a column vector. We denote by $\langle\cdot,\cdot\rangle$ the usual inner product in a Euclidean space. For each  matrix $A$, $A^\top$ denotes the transpose of $A$. For each scalar-valued differentiable function $f$ on $\bR^{d_1}$ with $d_1\in\bN$, the derivative $\partial_xf(x')\in\bR^{d_1}$ of $f$ at $x'\in\bR^{d_1}$ is understood as a column vector. If $f$ is $\bR^{d_2}$-valued with $d_2\in\bN$, the derivative $\partial_xf(x')$ is understood as a $(d_2\times d_1)$-matrix. 

$(\Omega,\cF,\bP)$ is a complete probability space, and $W(\cdot)=(W_1(\cdot),\dots,W_d(\cdot))^\top$ is a $d$-dimensional Brownian motion on $(\Omega,\cF,\bP)$ with $d\in\bN$. $\bF=(\cF_t)_{t\geq0}$ denotes the filtration generated by $W(\cdot)$ and augmented by $\bP$. We define $\cF_\infty:=\bigvee_{t\geq0}\cF_t$. For each $t\geq0$, $\bE_t[\cdot]:=\bE[\cdot|\cF_t]$ is the conditional expectation given by $\cF_t$.

We define
\begin{equation*}
	\Delta[0,\infty):=\{(t,s)\in[0,\infty)^2\,|\,0\leq t\leq s<\infty\}\ \text{and}\ \Delta^\comp[0,\infty):=\{(t,s)\in[0,\infty)^2\,|\,0\leq s\leq t<\infty\}.
\end{equation*}

Let $U$ be a nonempty Borel subset of a Euclidean space, and let $p\in[1,\infty)$ and $\beta\in\bR$ be fixed. $L^p_{\cF_\infty}(\Omega;U)$ denotes the set of $U$-valued and $\cF_\infty$-measurable $L^p$-random variables. Define
\begin{equation*}
	L^{p,\beta}(0,\infty;U):=\Bigl\{f:[0,\infty)\to U\,|\,\text{$f$ is measurable},\ \int^\infty_0e^{p\beta t}|f(t)|^p\rd t<\infty\Bigr\}.
\end{equation*}
It is easy to see that $L^{p',\beta'}(0,\infty;U)\subset L^{p,\beta}(0,\infty;U)$ if $p\leq p'$ and $\beta<\beta'$. However, $p<p'$ does not imply $L^{p',\beta}(0,\infty;U)\subset L^{p,\beta}(0,\infty;U)$. We define
\begin{equation*}
	L^p(0,\infty;U):=L^{p,0}(0,\infty;U)\ \text{and}\ L^{p,*}(0,\infty;U):=\bigcup_{\beta\in\bR}L^{p,\beta}(0,\infty;U).
\end{equation*}
Also, we define
\begin{equation*}
	L^{p,\beta}_{\cF_\infty}(0,\infty;U):=\Bigl\{f:\Omega\times[0,\infty)\to U\,|\,\text{$f$ is $\cF_\infty\otimes\cB([0,\infty))$-measurable},\ \bE\Bigl[\int^\infty_0e^{p\beta t}|f(t)|^p\rd t\Bigr]<\infty\Bigr\},
\end{equation*}
\begin{equation*}
	L^{p,\beta}_\bF(0,\infty;U):=\{f(\cdot)\in L^{p,\beta}_{\cF_\infty}(0,\infty;U)\,|\,f(\cdot)\ \text{is adapted}\},
\end{equation*}
and
\begin{equation*}
	\sL^{p,\beta}_\bF(0,\infty;U):=\left\{f:\Omega\times[0,\infty)^2\to U\relmiddle|\begin{aligned}&\text{$f$ is $\cF\otimes\cB([0,\infty)^2)$-measurable},\\&\text{$f(t,\cdot)$ is adapted for a.e.\ $t\geq0$},\\&\bE\Bigl[\int^\infty_0e^{p\beta t}\int^\infty_0|f(t,s)|^p\rd s\rd t\Bigr]<\infty\end{aligned}\right\}.
\end{equation*}
If $U$ is a Euclidean space, these spaces are Banach spaces with the norms, for example,
\begin{equation*}
	L^{p,\beta}_{\cF_\infty}(0,\infty;U)\ni f(\cdot)\mapsto \bE\Bigl[\int^\infty_0e^{p\beta t}|f(t)|^p\rd t\Bigr]^{1/p}
\end{equation*}
and
\begin{equation*}
	\sL^{p,\beta}_\bF(0,\infty;U)\ni f(\cdot,\cdot)\mapsto\bE\Bigl[\int^\infty_0e^{p\beta t}\int^\infty_0|f(t,s)|^p\rd s\rd t\Bigr]^{1/p}.
\end{equation*}
$L^p_{\cF_\infty}(0,\infty;U)$, $L^p_\bF(0,\infty;U)$, $\sL^p_\bF(0,\infty;U)$, $L^{p,*}_{\cF_\infty}(0,\infty;U)$, $L^{p,*}_\bF(0,\infty;U)$, and $\sL^{p,*}_\bF(0,\infty;U)$ are defined by similar manners as before.

For each $z(\cdot)\in L^2_\bF(0,\infty;\bR^{d_1\times d})$ with $d_1\in\bN$, the stochastic integral of $z(\cdot)$ with respect to the $d$-dimensional Brownian motion $W(\cdot)$ is defined by
\begin{equation*}
	\int^t_0z(s)\rd W(s):=\sum^d_{k=1}\int^t_0z^k(s)\rd W_k(s),\ t\geq0,
\end{equation*}
which is a $d_1$-dimensional square-integrable martingale. Here and elsewhere, for each $z\in\bR^{d_1\times d}$, $z^k\in\bR^{d_1}$ denotes the $k$-th column vector for $k=1,\dots,d$.

Denote by $\cM^{2,\beta}_\bF(0,\infty;\bR^{d_1}\times\bR^{d_1\times d})$ with $d_1\in\bN$ and $\beta\in\bR$ the set of pairs $(y(\cdot),z(\cdot,\cdot))\in L^{2,\beta}_\bF(0,\infty;\bR^{d_1})\times\sL^{2,\beta}_\bF(0,\infty;\bR^{d_1\times d})$ such that
\begin{equation*}
	y(t)=\bE[y(t)]+\int^t_0z(t,s)\rd W(s)
\end{equation*}
for a.e.\ $t\geq0$, a.s. Clearly, the space $\cM^{2,\beta}_\bF(0,\infty;\bR^{d_1}\times\bR^{d_1\times d})$ is a closed subspace of the product space $L^{2,\beta}_\bF(0,\infty;\bR^{d_1})\times\sL^{2,\beta}_\bF(0,\infty;\bR^{d_1\times d})$, and thus it is a Hilbert space. As before, we define $\cM^2_\bF(0,\infty;\bR^{d_1}\times\bR^{d_1\times d}):=\cM^{2,0}_\bF(0,\infty;\bR^{d_1}\times\bR^{d_1\times d})$ and $\cM^{2,*}_\bF(0,\infty;\bR^{d_1}\times\bR^{d_1\times d}):=\bigcup_{\beta\in\bR}\cM^{2,\beta}_\bF(0,\infty;\bR^{d_1}\times\bR^{d_1\times d})$.

Lastly, for each $f\in L^{p,*}(0,\infty;\bR_+)$, define $[f]_p:\bR\to[0,\infty]$ by
\begin{equation*}
	[f]_p(\rho):=\Bigl(\int^\infty_0e^{-p\rho t}f(t)^p\rd t\Bigr)^{1/p},\ \rho\in\bR.
\end{equation*}
Note that the function $[f]_p$ is non-increasing and satisfies $\lim_{\rho\to\infty}[f]_p(\rho)=0$.


\section{Discounted control problems for SVIEs}\label{sec: SVIE}

In this section, we briefly recall our earlier work~\cite{Ha21+}. For more detailed discussions, see Section~4 in \cite{Ha21+}.

For each $\mu\in\bR$, define the set of \emph{control processes} by $\cU_{-\mu}:=L^{2,-\mu}_\bF(0,\infty;U)$, where $U$ is a convex body (i.e., $U$ is convex and has a nonempty interior) in $\bR^\ell$ with $\ell\in\bN$. For each control process $u(\cdot)\in\cU_{-\mu}$, define the corresponding \emph{state process} $X^u(\cdot)$ as the solution of the following controlled SVIE:
\begin{equation*}
	X^u(t)=\varphi(t)+\int^t_0b(t,s,X^u(s),u(s))\rd s+\int^t_0\sigma(t,s,X^u(s),u(s))\rd W(s),\ t\geq0,
\end{equation*}
where $\varphi(\cdot)$ is a given process, and $b,\sigma$ are given deterministic maps. In order to measure the performance of $u(\cdot)$ and $X^u(\cdot)$, we consider the following \emph{discounted cost functional}:
\begin{equation*}
	J_\lambda(u(\cdot)):=\bE\Bigl[\int^\infty_0e^{-\lambda t}h(t,X^u(t),u(t))\rd t\Bigr],
\end{equation*}
where $h$ is a given $\bR$-valued deterministic function and $\lambda\in\bR$ is a \emph{discount rate}. The stochastic control problem is a problem to seek a control process $\hat{u}(\cdot)\in\cU_{-\mu}$ such that
\begin{equation*}
	J_\lambda(\hat{u}(\cdot))=\inf_{u(\cdot)\in\cU_{-\mu}}J_\lambda(u(\cdot)).
\end{equation*}
If it is the case, we call $\hat{u}(\cdot)$ an \emph{optimal control}.


\begin{assum}\label{SVIE: assum}
\begin{itemize}
\item[(i)]
$\varphi(\cdot)\in L^{2,-\mu}_\bF(0,\infty;\bR^n)$.
\item[(ii)]
$b:\Delta^\comp[0,\infty)\times\bR^n\times\bR^\ell\to\bR^n$ and $\sigma:\Delta^\comp[0,\infty)\times\bR^n\times\bR^\ell\to\bR^{n\times d}$ are measurable; $b(t,s,x,u)$ and $\sigma(t,s,x,u)$ are continuously differentiable in $(x,u)\in\bR^n\times\bR^\ell$ for a.e.\ $(t,s)\in\Delta^\comp[0,\infty)$; there exist $K_{b,x},K_{b,u}\in L^{1,*}(0,\infty;\bR_+)$ and $K_{\sigma,x},K_{\sigma,u}\in L^{2,*}(0,\infty;\bR_+)$ such that
\begin{align*}
	&|b(t,s,x,u)-b(t,s,x',u')|\leq K_{b,x}(t-s)|x-x'|+K_{b,u}(t-s)|u-u'|,\\
	&|\sigma(t,s,x,u)-\sigma(t,s,x',u')|\leq K_{\sigma,x}(t-s)|x-x'|+K_{\sigma,u}(t-s)|u-u'|,
\end{align*}
for any $x,x'\in\bR^n$ and $u,u'\in\bR^\ell$, for a.e.\ $(t,s)\in\Delta^\comp[0,\infty)$; it holds that
\begin{equation*}
	\int^\infty_0e^{-2\mu t}\Bigl(\int^t_0|b(t,s,0,0)|\rd s\Bigr)^2\rd t+\int^\infty_0e^{-2\mu t}\int^t_0|\sigma(t,s,0,0)|^2\rd s\rd t<\infty.
\end{equation*}
\item[(iii)]
$h:[0,\infty)\times\bR^n\times\bR^\ell\to\bR$ is measurable; $h(t,x,u)$ is continuously differentiable in $(x,u)\in\bR^n\times\bR^\ell$ for a.e.\ $t\geq0$; there exists a constant $C>0$ such that
\begin{equation*}
	|h(t,x,u)|\leq C(1+|x|^2+|u|^2),\ |\partial_xh(t,x,u)|\leq C(1+|x|+|u|),\ |\partial_uh(t,x,u)|\leq C(1+|x|+|u|),
\end{equation*}
for any $(x,u)\in\bR^n\times\bR^\ell$, for a.e.\ $t\geq0$.
\end{itemize}
\end{assum}

We define
\begin{equation}\label{SVIE: criterion}
	\rho_{b,\sigma;x,u}:=\inf\{\rho\in\bR_+\,|\,[K_{b,u}]_1(\rho)+[K_{\sigma,u}]_2(\rho)<\infty,\ [K_{b,x}]_1(\rho)+[K_{\sigma,x}]_2(\rho)\leq1\}.
\end{equation}If $\mu>\rho_{b,\sigma;x,u}$ and $\lambda\geq2\mu$, the state process $X^u(\cdot)\in L^{2,-\mu}_\bF(0,\infty;\bR^n)$ and the discounted cost functional $J_\lambda(u(\cdot))\in\bR$ is well-defined for any $u(\cdot)\in\cU_{-\mu}$ (see Proposition 2.3 and Remark 2.4 in \cite{Ha21+}). We note that the parameters $\mu$ and $\lambda$ satisfying these conditions are strictly positive.

For a fixed control process $\hat{u}(\cdot)\in\cU_{-\mu}$, denote the corresponding state process by $\hat{X}(\cdot):=X^{\hat{u}}(\cdot)$. We use the following notations:
\begin{align*}
	&b_x(t,s):=\partial_xb(t,s,\hat{X}(s),\hat{u}(s)),\ b_u(t,s):=\partial_ub(t,s,\hat{X}(s),\hat{u}(s)),\\
	&\sigma^k_x(t,s):=\partial_x\sigma^k(t,s,\hat{X}(s),\hat{u}(s)),\ \sigma^k_u(t,s):=\partial_u\sigma^k(t,s,\hat{X}(s),\hat{u}(s)),\ k=1,\dots,d,
\end{align*}
for $(t,s)\in\Delta^\comp[0,\infty)$, and
\begin{equation*}
	h_x(t):=\partial_xh(t,\hat{X}(t),\hat{u}(t)),\ h_u(t):=\partial_uh(t,\hat{X}(t),\hat{u}(t)),
\end{equation*}
for $t\geq0$. We introduce the \emph{adjoint equation} of the following form:
\begin{equation}\label{SVIE: AE}
	\hat{Y}(t)=h_x(t)+\int^\infty_te^{-\lambda(s-t)}\Bigl\{b_x(s,t)^\top\hat{Y}(s)+\sum^d_{k=1}\sigma^k_x(s,t)^\top\hat{Z}^k(s,t)\Bigr\}\rd s-\int^\infty_t\hat{Z}(t,s)\rd W(s),\ t\geq0.
\end{equation}
The above equation is an infinite horizon BSVIE. As in \cite{Yo06,Yo08,Ha21+}, we say that a pair $(\hat{Y}(\cdot),\hat{Z}(\cdot,\cdot))$ is an \emph{adapted M-solution} to the infinite horizon BSVIE~\eqref{SVIE: AE} if it is in $\cM^{2,*}_\bF(0,\infty;\bR^n\times\bR^{n\times d})$ and satisfies the equation for a.e.\ $t\geq0$, a.s. By Theorem~3.7 in \cite{Ha21+}, we see that \eqref{SVIE: AE} admits a unique adapted M-solution $(\hat{Y}(\cdot),\hat{Z}(\cdot,\cdot))\in\cM^{2,-\mu}_\bF(0,\infty;\bR^n\times\bR^{n\times d})$.

We introduce a \emph{Hamiltonian functional} defined by
\begin{equation*}
	H_\lambda(t,x,u,p(\cdot),q(\cdot)):=h(t,x,u)+\int^\infty_te^{-\lambda(s-t)}\Bigl\{\langle b(s,t,x,u),p(s)\rangle+\sum^d_{k=1}\langle\sigma^k(s,t,x,u),q^k(s)\rangle\Bigr\}\rd s
\end{equation*}
for $(t,x,u,p(\cdot),q(\cdot))\in[0,\infty)\times\bR^n\times U\times L^{2,-\mu}(0,\infty;\bR^n)\times L^{2,-\mu}(0,\infty;\bR^{n\times d})$. By taking the conditional expectations $\bE_t[\cdot]$ on both sides of \eqref{SVIE: AE}, we see that the adjoint equation \eqref{SVIE: AE} can be written in terms of the Hamiltonian functional by
\begin{equation*}
	\hat{Y}(t)=\partial_xH_\lambda\bigl(t,\hat{X}(t),\hat{u}(t),\bE_t\bigl[\hat{Y}(\cdot)\bigr],\hat{Z}(\cdot,t)\bigr),\ \text{for a.e.}\ t\geq0,\ \text{a.s.}
\end{equation*}
Indeed, the above equation determines $\hat{Y}(t)$ and $\hat{Z}(t,s)$ for $(t,s)\in\Delta^\comp[0,\infty)$, and the term $\hat{Z}(t,s)$, $(t,s)\in\Delta[0,\infty)$, satisfying \eqref{SVIE: AE} is uniquely determined by them through the usual martingale representation theorem.

Under the above notations, the necessary and sufficient maximum principles for discounted control problems of SVIEs are stated as follows.

\begin{prop}[The necessary maximum principle for SVIEs; Theorem~4.3 in \cite{Ha21+}]\label{SVIE: prop_nMP}
Let \cref{SVIE: assum} hold, and suppose that $\mu,\lambda\in\bR$ satisfy $\mu>\rho_{b,\sigma;x,u}$ and $\lambda\geq2\mu$. If $\hat{u}(\cdot)\in\cU_{-\mu}$ is an optimal control, then the following \emph{optimality condition} holds for a.e.\ $t\geq0$ and a.s.:
\begin{equation}\label{SVIE: OC}
	\bigl\langle\partial_uH_\lambda\bigl(t,\hat{X}(t),\hat{u}(t),\bE_t\bigl[\hat{Y}(\cdot)\bigr],\hat{Z}(\cdot,t)\bigr),u-\hat{u}(t)\bigr\rangle\geq0,\ \forall\,u\in U.
\end{equation}
\end{prop}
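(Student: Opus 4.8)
The plan is to use the classical convex-variation (spike/needle-free) argument adapted to the infinite horizon discounted setting, exploiting the convexity of the control set $U$. First I would fix an optimal control $\hat{u}(\cdot)\in\cU_{-\mu}$ and an arbitrary $u(\cdot)\in\cU_{-\mu}$, and for $\theta\in(0,1]$ consider the perturbed control $u^\theta(\cdot):=\hat{u}(\cdot)+\theta(u(\cdot)-\hat{u}(\cdot))$, which again lies in $\cU_{-\mu}$ by convexity of $U$ (and since $\cU_{-\mu}$ is a linear space intersected with $U$-valued maps). Denoting by $X^\theta(\cdot)$ the corresponding state, I would introduce the variational equation governing the derivative process $\Xi(\cdot):=\lim_{\theta\downarrow0}\frac{X^\theta(\cdot)-\hat X(\cdot)}{\theta}$, which solves the linear SVIE
\begin{equation*}
	\Xi(t)=\int^t_0\Bigl\{b_x(t,s)\Xi(s)+b_u(t,s)(u(s)-\hat{u}(s))\Bigr\}\rd s+\int^t_0\Bigl\{\sigma_x(t,s)\Xi(s)+\sigma_u(t,s)(u(s)-\hat{u}(s))\Bigr\}\rd W(s),\ t\geq0.
\end{equation*}
The well-posedness of this linear equation in $L^{2,-\mu}_\bF(0,\infty;\bR^n)$ and the convergence $\frac{X^\theta(\cdot)-\hat X(\cdot)}{\theta}\to\Xi(\cdot)$ in that space follow from the estimates underlying Proposition~2.3 of \cite{Ha21+} together with the Lipschitz and growth bounds in \cref{SVIE: assum}; the weighted norms with exponential factor $e^{-\mu t}$ are exactly what makes these estimates close under the condition $\mu>\rho_{b,\sigma;x,u}$.

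Next, since $\hat u(\cdot)$ is optimal, $\theta\mapsto J_\lambda(u^\theta(\cdot))$ attains its minimum at $\theta=0$ over $[0,1]$, so its right derivative at $0$ is nonnegative. Using the growth bounds on $\partial_xh$ and $\partial_uh$ from \cref{SVIE: assum}(iii), dominated convergence, and the convergence of the difference quotients established above, I would compute
\begin{equation*}
	0\leq\frac{\rd}{\rd\theta}\Big|_{\theta=0^+}J_\lambda(u^\theta(\cdot))=\bE\Bigl[\int^\infty_0e^{-\lambda t}\Bigl\{\langle h_x(t),\Xi(t)\rangle+\langle h_u(t),u(t)-\hat{u}(t)\rangle\Bigr\}\rd t\Bigr].
\end{equation*}
(Here I must be a little careful that $e^{-\lambda t}$ with $\lambda\geq2\mu$ dominates the terms $|h_x(t)||\Xi(t)|$, which is integrable since $h_x(t)$ grows at most linearly in $(\hat X(t),\hat u(t))$ and $\Xi(\cdot),\hat X(\cdot),\hat u(\cdot)\in L^{2,-\mu}_\bF$.)

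The crux of the proof is then the \emph{duality relation} between the variational process $\Xi(\cdot)$ and the adjoint M-solution $(\hat Y(\cdot),\hat Z(\cdot,\cdot))$ of the infinite horizon BSVIE~\eqref{SVIE: AE}: one shows
\begin{equation*}
	\bE\Bigl[\int^\infty_0e^{-\lambda t}\langle h_x(t),\Xi(t)\rangle\rd t\Bigr]=\bE\Bigl[\int^\infty_0e^{-\lambda t}\sum^d_{k=1}\Bigl\langle b_u(t,s)^{} ,\ \cdot\Bigr\rangle\cdots\Bigr],
\end{equation*}
or more precisely, substituting the definition of $\Xi$ and the adjoint equation, that
\begin{equation*}
	\bE\Bigl[\int^\infty_0e^{-\lambda t}\langle h_x(t),\Xi(t)\rangle\rd t\Bigr]=\bE\Bigl[\int^\infty_0e^{-\lambda t}\Bigl\{\langle b_u(\cdot,t)^\top\bE_t[\hat Y(\cdot)]_{\mathrm{wt}},u(t)-\hat u(t)\rangle+\sum^d_{k=1}\langle\sigma^k_u(\cdot,t)^\top\hat Z^k(\cdot,t)_{\mathrm{wt}},u(t)-\hat u(t)\rangle\Bigr\}\rd t\Bigr],
\end{equation*}
where the subscript indicates the discounted integration $\int^\infty_t e^{-\lambda(s-t)}(\cdot)\rd s$ against the $s$-variable. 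This is obtained by a Fubini-type interchange of the $\rd s$ and $\rd W$ integrals combined with the M-solution property $\hat Y(t)=\bE[\hat Y(t)]+\int^t_0\hat Z(t,s)\rd W(s)$; the discount factor $e^{-\lambda t}$ with $\lambda\geq2\mu$ is what guarantees absolute convergence of all the resulting double integrals so that the interchanges are legitimate over the infinite time horizon. I expect this duality/Fubini step — keeping track of the weights $e^{-\lambda t}$, $e^{-\lambda(s-t)}$, and $e^{-\mu t}$ simultaneously, and justifying the integrability needed for the stochastic Fubini theorem on $[0,\infty)$ — to be the main technical obstacle; this is precisely where the infinite-horizon framework differs from the finite-horizon maximum principles of \cite{Yo06,Yo08}. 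Finally, combining the duality relation with the derivative inequality and recognizing the right-hand side as $\bE[\int^\infty_0 e^{-\lambda t}\langle\partial_uH_\lambda(t,\hat X(t),\hat u(t),\bE_t[\hat Y(\cdot)],\hat Z(\cdot,t)),u(t)-\hat u(t)\rangle\rd t]\geq0$ for all $u(\cdot)\in\cU_{-\mu}$, a standard measurable-selection / localization argument (choosing $u(\cdot)=\hat u(\cdot)+\1_A\1_{[t_0,t_1]}(v-\hat u(t))$ for arbitrary $v\in U$ and $A\in\cF_{t_0}$, exploiting convexity of $U$) yields the pointwise optimality condition~\eqref{SVIE: OC} for a.e.\ $t\geq0$, a.s.
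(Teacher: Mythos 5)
Note that the paper does not prove this proposition itself — it is quoted verbatim from Theorem~4.3 of \cite{Ha21+}, whose proof (as the paper describes) proceeds without It\^{o}'s formula, establishing the duality directly from the M-solution property $\hat Y(t)=\bE[\hat Y(t)]+\int^t_0\hat Z(t,s)\rd W(s)$ and Fubini-type interchanges. Your proposal — convex perturbation, variational linear SVIE, Gateaux derivative of $J_\lambda$, duality between $\Xi(\cdot)$ and $(\hat Y(\cdot),\hat Z(\cdot,\cdot))$ via the M-solution representation and stochastic Fubini, then localization — is correct and is essentially that same argument, modulo minor notational slips (the garbled intermediate duality display, and the localization perturbation should read $\1_A\1_{[t_0,t_1]}(s)(v-\hat u(s))$).
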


\begin{prop}[The sufficient maximum principle for SVIEs; Theorem~4.5 in \cite{Ha21+}]\label{SVIE: prop_sMP}
Let \cref{SVIE: assum} hold, and suppose that $\mu>\rho_{b,\sigma;x,u}$ and $\lambda\geq2\mu$. Let $\hat{u}(\cdot)\in\cU_{-\mu}$ be given. Assume that the function
\begin{equation}\label{SVIE: convex}
	\bR^n\times U\ni(x,u)\mapsto H_\lambda\bigl(t,x,u,\bE_t\bigl[\hat{Y}(\cdot)\bigr],\hat{Z}(\cdot,t)\bigr)\in\bR
\end{equation}
is convex for a.e.\ $t\geq0$, a.s. Furthermore, assume that the optimality condition \eqref{SVIE: OC} holds for a.e.\ $t\geq0$, a.s. Then $\hat{u}(\cdot)$ is optimal.
\end{prop}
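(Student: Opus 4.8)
The plan is to run the classical convex‑duality argument for the sufficient maximum principle, adapted to the weighted infinite‑horizon spaces. I would fix an arbitrary competitor $u(\cdot)\in\cU_{-\mu}$ with state $X^u(\cdot)\in L^{2,-\mu}_\bF(0,\infty;\bR^n)$ (well‑posed by Proposition~2.3 in \cite{Ha21+}) and aim to show $J_\lambda(u(\cdot))\geq J_\lambda(\hat u(\cdot))$. Writing $\Delta X(\cdot):=X^u(\cdot)-\hat X(\cdot)$, $\Delta u(\cdot):=u(\cdot)-\hat u(\cdot)$, and abbreviating the (random) frozen Hamiltonian by $H^\star_\lambda(t,x,u):=H_\lambda\bigl(t,x,u,\bE_t[\hat Y(\cdot)],\hat Z(\cdot,t)\bigr)$, the key is the \emph{duality identity}: for every admissible pair $(v(\cdot),X^v(\cdot))$, in particular for $(\hat u,\hat X)$ and $(u,X^u)$,
\begin{equation*}
	\bE\int^\infty_0e^{-\lambda t}\bigl(H^\star_\lambda(t,X^v(t),v(t))-h(t,X^v(t),v(t))\bigr)\rd t=\bE\int^\infty_0e^{-\lambda t}\bigl\langle\hat Y(t),X^v(t)-\varphi(t)\bigr\rangle\rd t.
\end{equation*}

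\textbf{Proof of the duality identity.} I would expand $H^\star_\lambda(t,x,u)-h(t,x,u)=\int^\infty_te^{-\lambda(s-t)}\bigl\{\langle b(s,t,x,u),\bE_t[\hat Y(s)]\rangle+\sum^d_{k=1}\langle\sigma^k(s,t,x,u),\hat Z^k(s,t)\rangle\bigr\}\rd s$, set $(x,u)=(X^v(t),v(t))$, multiply by $e^{-\lambda t}$ (so $e^{-\lambda t}e^{-\lambda(s-t)}=e^{-\lambda s}$), and interchange $\int^\infty_0\!\int^\infty_t$ into $\int^\infty_0\!\int^s_0$ by Fubini. For the drift term, since $b(s,t,X^v(t),v(t))$ is $\cF_t$‑measurable, the tower property replaces $\bE_t[\hat Y(s)]$ by $\hat Y(s)$ under $\bE$, and the state equation gives $\int^s_0b(s,t,X^v(t),v(t))\rd t=X^v(s)-\varphi(s)-\int^s_0\sigma(s,t,X^v(t),v(t))\rd W(t)$. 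For the diffusion term, the adapted M‑solution property $\hat Y(s)=\bE[\hat Y(s)]+\int^s_0\hat Z(s,t)\rd W(t)$ together with the It\^o isometry yields $\bE\int^s_0\sum^d_{k=1}\langle\sigma^k(s,t,X^v(t),v(t)),\hat Z^k(s,t)\rangle\rd t=\bE\bigl\langle\int^s_0\sigma(s,t,X^v(t),v(t))\rd W(t),\hat Y(s)\bigr\rangle$ (here it matters that $\hat Z(s,t)$ for $s\geq t$ is $\cF_t$‑measurable, so the conditional expectation attached to the last slot of $H_\lambda$ is immaterial). Adding the two terms, the stochastic‑integral contributions cancel and the identity follows.

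\textbf{Conclusion.} Subtracting the identity for $(\hat u,\hat X)$ from that for $(u,X^u)$, the $h$‑integrals assemble into $J_\lambda(u(\cdot))-J_\lambda(\hat u(\cdot))$ and one gets
\begin{equation*}
	J_\lambda(u(\cdot))-J_\lambda(\hat u(\cdot))=\bE\int^\infty_0e^{-\lambda t}\bigl(H^\star_\lambda(t,X^u(t),u(t))-H^\star_\lambda(t,\hat X(t),\hat u(t))\bigr)\rd t-\bE\int^\infty_0e^{-\lambda t}\bigl\langle\hat Y(t),\Delta X(t)\bigr\rangle\rd t.
\end{equation*}
The convexity hypothesis \eqref{SVIE: convex} then gives, via the subgradient inequality at $(\hat X(t),\hat u(t))$, the lower bound $H^\star_\lambda(t,X^u(t),u(t))-H^\star_\lambda(t,\hat X(t),\hat u(t))\geq\langle\partial_xH^\star_\lambda(t,\hat X(t),\hat u(t)),\Delta X(t)\rangle+\langle\partial_uH^\star_\lambda(t,\hat X(t),\hat u(t)),\Delta u(t)\rangle$; and since \eqref{SVIE: AE}, after taking $\bE_t[\cdot]$, says exactly $\partial_xH^\star_\lambda(t,\hat X(t),\hat u(t))=\hat Y(t)$, the term $\langle\hat Y(t),\Delta X(t)\rangle$ cancels the last integral, leaving
\begin{equation*}
	J_\lambda(u(\cdot))-J_\lambda(\hat u(\cdot))\geq\bE\int^\infty_0e^{-\lambda t}\bigl\langle\partial_uH^\star_\lambda(t,\hat X(t),\hat u(t)),u(t)-\hat u(t)\bigr\rangle\rd t\geq0,
\end{equation*}
the last inequality being the optimality condition \eqref{SVIE: OC} applied pointwise with $u=u(t)\in U$. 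Hence $\hat u(\cdot)$ is optimal.

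\textbf{Main obstacle.} The hard part is not the algebra but the integrability bookkeeping in the weighted infinite‑horizon framework: one must justify each Fubini interchange, the passage of $\bE$ and $\bE_t$ through the iterated integrals, and the It\^o isometry, by establishing absolute convergence through Cauchy--Schwarz against the exponential weights, using the kernel integrability $K_{b,x},K_{b,u}\in L^{1,*}$ and $K_{\sigma,x},K_{\sigma,u}\in L^{2,*}$ from \cref{SVIE: assum}, the memberships $X^u(\cdot),\hat Y(\cdot)\in L^{2,-\mu}_\bF$ and $\hat Z(\cdot,\cdot)\in\sL^{2,-\mu}_\bF$, and the condition $\lambda\geq2\mu$ that makes $e^{-\lambda t}$ strong enough to absorb the quadratic growth of $h$ and the linear growth of $b,\sigma$. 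One also needs that $H^\star_\lambda$ is $C^1$ in $(x,u)$ with derivatives obtained by differentiating under the integral sign, which again rests on the Lipschitz and growth bounds. Once these estimates are secured, the rest is a matter of bookkeeping.
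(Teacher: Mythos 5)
The paper does not prove this proposition itself; it is imported verbatim from Theorem~4.5 of \cite{Ha21+}, so there is no in-paper proof to compare against. Your argument is the standard convex-duality proof --- Fubini plus the adapted M-solution property $\hat Y(s)=\bE[\hat Y(s)]+\int_0^s\hat Z(s,t)\rd W(t)$ and the It\^o isometry to establish the duality identity, then the subgradient inequality combined with $\hat Y(t)=\partial_xH_\lambda(t,\hat X(t),\hat u(t),\bE_t[\hat Y(\cdot)],\hat Z(\cdot,t))$ and the optimality condition --- and it is correct modulo the integrability bookkeeping you already flag (all of which goes through via Cauchy--Schwarz under $\lambda\geq2\mu$); this is consistent with the route taken in the cited reference, which likewise avoids It\^o's formula and BSDE theory.
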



\begin{rem}\label{SVIE: rem_large}
Let $\mu>\rho_{b,\sigma;x,u}$ and $\lambda\geq2\mu$, fix a control process $\hat{u}(\cdot)\in\cU_{-\mu}$, and assume that the function \eqref{SVIE: convex} is convex for a.e.\ $t\geq0$, a.s. By Propositions~\ref{SVIE: prop_nMP} and \ref{SVIE: prop_sMP}, $\hat{u}(\cdot)\in\cU_{-\mu}$ is optimal over all control processes in $\cU_{-\mu}$ if and only if the optimality condition \eqref{SVIE: OC} holds for a.e.\ $t\geq0$, a.s. However, the latter does not depend on the choice of $\mu$. Therefore, in this case, the optimality of $\hat{u}(\cdot)\in\cU_{-\mu}$ on the control space $\cU_{-\mu}$ implies the optimality on the larger control space $\cU_{-\lambda/2}$, that is,
\begin{equation*}
	J_\lambda(\hat{u}(\cdot))=\inf_{u(\cdot)\in\cU_{-\lambda/2}}J_\lambda(u(\cdot)).
\end{equation*}
\end{rem}

We remark that, compared with the classical results on the maximum principle (see the textbook \cite{YoZh99}), all the above results (including the existence and uniqueness of the adapted M-solutions to infinite horizon BSVIEs) are proved without It\^{o}'s formula or BSDE theory. This is an interesting fact in a theoretical point of view.


\section{Discounted control problems for SDVIEs}\label{sec: SDVIE}

Next, we turn to the main topic of this paper. In this section, we consider discounted control problems of general stochastic delay Volterra integral equations (SDVIEs, for short). We obtain the necessary and sufficient maximum principles for this problem (see \cref{SDVIE: theo_MP} below). The resulting adjoint equation is a new kind of infinite horizon \emph{anticipated BSVIEs} (ABSVIEs, for short) which is beyond the class of ABSVIEs discussed in \cite{WeSh20}.

Suppose that the state process $X^u(\cdot)$ solves the controlled SDVIE~\eqref{1: state}, which we rewrite for readers' convenience:
\begin{equation*}
	\begin{dcases}
	X^u(t)=\varphi(t)+\int^t_0b(t,s,X^u(s),X^u(s-\delta),u(s))\rd s+\int^t_0\sigma(t,s,X^u(s),X^u(s-\delta),u(s))\rd W(s),\ t\geq0,\\
	X^u(t)=\varphi(t),\ t\in[-\delta,0].
	\end{dcases}
\end{equation*}
Here, $\delta\geq0$ is a given constant, $\varphi(\cdot)$ is a given free term, and $b:\Delta^\comp[0,\infty)\times\bR^n\times\bR^n\times\bR^\ell\to\bR^n$ and $\sigma:\Delta^\comp[0,\infty)\times\bR^n\times\bR^n\times\bR^\ell\to\bR^{n\times d}$ are given measurable maps. We consider the cost functional~\eqref{1: cost}:
\begin{equation*}
	J_\lambda(u(\cdot)):=\bE\Bigl[\int^\infty_0e^{-\lambda t}h(t,X^u(t),X^u(t-\delta),u(t))\rd t\Bigr],
\end{equation*}
where $\lambda\in\bR$ is a given discount rate and $h:[0,\infty)\times\bR^n\times\bR^n\times\bR^\ell\to\bR$ is a given measurable function. Our problem is to minimize the discounted cost functional \eqref{1: cost} over all control processes $u(\cdot)\in\cU_{-\mu}$, subject to the state equation given by the controlled SDVIE~\eqref{1: state}. We impose the following assumptions on the coefficients.


\begin{assum}\label{SDVIE: assum}
\begin{itemize}
\item[(i)]
$(\varphi(t))_{t\in[-\delta,0]}$ is a deterministic continuous function on $[-\delta,0]$ with values in $\bR^n$, and $(\varphi(t))_{t\geq0}\in L^{2,-\mu}_\bF(0,\infty;\bR^n)$.
\item[(ii)]
$b(t,s,x_1,x_2,u)$ and $\sigma(t,s,x_1,x_2,u)$ are continuously differentiable with respect to $(x_1,x_2,u)\in\bR^n\times\bR^n\times\bR^\ell$ for a.e.\ $(t,s)\in\Delta^\comp[0,\infty)$; there exist $K_{b,x_1},K_{b,x_2},K_{b,u}\in L^{1,*}(0,\infty;\bR_+)$ and $K_{\sigma,x_1},K_{\sigma,x_2},K_{\sigma,u}\in L^{2,*}(0,\infty;\bR_+)$ such that
\begin{align*}
	&|b(t,s,x_1,x_2,u)\mathalpha{-}b(t,s,x'_1,x'_2,u')|\leq K_{b,x_1}(t\mathalpha{-}s)|x_1\mathalpha{-}x'_1|+K_{b,x_2}(t\mathalpha{-}s)|x_2\mathalpha{-}x'_2|+K_{b,u}(t\mathalpha{-}s)|u\mathalpha{-}u'|,\\
	&|\sigma(t,s,x_1,x_2,u)\mathalpha{-}\sigma(t,s,x'_1,x'_2,u')|\leq K_{\sigma,x_1}(t\mathalpha{-}s)|x_1\mathalpha{-}x'_1|+K_{\sigma,x_2}(t\mathalpha{-}s)|x_2\mathalpha{-}x'_2|+K_{\sigma,u}(t\mathalpha{-}s)|u\mathalpha{-}u'|,
\end{align*}
for any $x_1,x'_1,x_2,x'_2\in\bR^n$ and $u,u'\in\bR^\ell$, for a.e.\ $(t,s)\in\Delta^\comp[0,\infty)$; it holds that
\begin{equation*}
	\int^\infty_0e^{-2\mu t}\Bigl(\int^t_0|b(t,s,0,0,0)|\rd s\Bigr)^2\rd t+\int^\infty_0e^{-2\mu t}\int^t_0|\sigma(t,s,0,0,0)|^2\rd s\rd t<\infty.
\end{equation*}
\item[(iii)]
$h(t,x_1,x_2,u)$ is continuously differentiable in $(x_1,x_2,u)\in\bR^n\times\bR^n\times\bR^\ell$ for a.e.\ $t\geq0$; there exists a constant $C>0$ such that
\begin{equation*}
	|h(t,x_1,x_2,u)|\leq C(1+|x_1|^2+|x_2|^2+|u|^2),\ |\partial_\xi h(t,x_1,x_2,u)|\leq C(1+|x_1|+|x_2|+|u|),\ \xi=x_1,x_2,u,
\end{equation*}
for any $(x_1,x_2,u)\in\bR^n\times\bR^n\times\bR^\ell$, for a.e.\ $t\geq0$.
\end{itemize}
\end{assum}

Now we convert the controlled SDVIE~\eqref{1: state} into an auxiliary controlled SVIE (without delay) by using a simple idea to ``lift up'' the state process. Define an $\bR^{2n}$-valued auxiliary state process $\bX^u(\cdot)$ by
\begin{equation*}
	\bX^u(\cdot):=\left(\begin{array}{c}X^u_1(\cdot)\\X^u_2(\cdot)\end{array}\right),
\end{equation*}
where $X^u_1(t):=X^u(t)$ and $X^u_2(t):=X^u(t-\delta)$ for $t\geq0$. From the SDVIE~\eqref{1: state}, we see that
\begin{equation*}
	X^u_1(t)=\varphi(t)+\int^t_0b(t,s,X^u_1(s),X^u_2(s),u(s))\rd s+\int^t_0\sigma(t,s,X^u_1(s),X^u_2(s),u(s))\rd W(s)
\end{equation*}
and
\begin{equation*}
	X^u_2(t)=X^u(t-\delta)=
	\begin{dcases}
		\varphi(t-\delta)\ \text{if}\ t\in[0,\delta],\\
		\varphi(t-\delta)+\int^{t-\delta}_0b(t-\delta,s,X^u_1(s),X^u_2(s),u(s))\rd s\\
		\hspace{1cm}+\int^{t-\delta}_0\sigma(t-\delta,s,X^u_1(s),X^u_2(s),u(s))\rd W(s),\ \text{if}\ t\geq\delta,
	\end{dcases}
\end{equation*}
for $t\geq0$. The latter can be written as
\begin{align*}
	X^u_2(t)&=\varphi(t-\delta)+\int^t_0\1_{[\delta,\infty)}(t-s)b(t-\delta,s,X^u_1(s),X^u_2(s),u(s))\rd s\\
	&\hspace{3cm}+\int^t_0\1_{[\delta,\infty)}(t-s)\sigma(t-\delta,s,X^u_1(s),X^u_2(s),u(s))\rd W(s),\ t\geq0.
\end{align*}
Therefore, we see that the dynamics of the auxiliary state process $\bX^u(\cdot)$ is described by the following SVIE:
\begin{equation}\label{SDVIE: aux_state}
	\bX^u(t)=\tilde{\varphi}(t)+\int^t_0\tilde{b}(t,s,\bX^u(s),u(s))\rd s+\int^t_0\tilde{\sigma}(t,s,\bX^u(s),u(s))\rd W(s),\ t\geq0,
\end{equation}
where
\begin{equation*}
	\tilde{\varphi}(t):=\left(\begin{array}{c}\varphi(t)\\\varphi(t-\delta)\end{array}\right)
\end{equation*}
for $t\geq0$ and, with the notations $J_1:=(I_{n\times n},0_{n\times n})$ and $J_2:=(0_{n\times n},I_{n\times n})$,
\begin{align*}
	&\tilde{b}(t,s,\bX,u):=\left(\begin{array}{c}b(t,s,J_1\bX,J_2\bX,u)\\\1_{[\delta,\infty)}(t-s)b(t-\delta,s,J_1\bX,J_2\bX,u)\end{array}\right),\\
	&\tilde{\sigma}(t,s,\bX,u):=\left(\begin{array}{c}\sigma(t,s,J_1\bX,J_2\bX,u)\\\1_{[\delta,\infty)}(t-s)\sigma(t-\delta,s,J_1\bX,J_2\bX,u)\end{array}\right),
\end{align*}
for $(t,s)\in\Delta^\comp[0,\infty)$, $\bX\in\bR^{2n}$, and $u\in \bR^\ell$. Also, the cost functional is written by
\begin{equation}\label{SDVIE: aux_cost}
	J_\lambda(u(\cdot))=\bE\Bigl[\int^\infty_0e^{-\lambda t}\tilde{h}(t,\bX^u(t),u(t))\rd t\Bigr],
\end{equation}
where $\tilde{h}:[0,\infty)\times\bR^{2n}\times\bR^\ell\to\bR$ is defined by $\tilde{h}(t,\bX,u):=h(t,J_1\bX,J_2\bX,u)$ for $(t,\bX,u)\in[0,\infty)\times\bR^{2n}\times\bR^\ell$. It is easy to see that there exists a unique solution $X^u(\cdot)\in L^{2,-\mu}_{\bF}(0,\infty;\bR^n)$ of SDVIE~\eqref{1: state} if and only if there exists a unique solution $\bX^u(\cdot)\in L^{2,-\mu}_\bF(0,\infty;\bR^{2n})$ of SVIE~\eqref{SDVIE: aux_state}, and the solutions are connected via the relations $J_1\bX^u(t)=X^u_1(t)=X^u(t)$ and $J_2\bX^u(t)=X^u_2(t)=X^u(t-\delta)$ for $t\geq0$. Observe that, for $f=b,\sigma$,
\begin{align*}
	&|\tilde{f}(t,s,\bX,u)-\tilde{f}(t,s,\bX',u')|\\
	&\leq|f(t,s,J_1\bX,J_2\bX,u)-f(t,s,J_1\bX',J_2\bX',u')|\\
	&\hspace{1cm}+\1_{[\delta,\infty)}(t-s)|f(t-\delta,s,J_1\bX,J_2\bX,u)-f(t-\delta,s,J_1\bX',J_2\bX',u')|\\
	&\leq K_{\tilde{f},\bX}(t-s)|\bX-\bX'|+K_{\tilde{f},u}(t-s)|u-u'|
\end{align*}
for any $\bX,\bX'\in\bR^{2n}$ and $u,u'\in\bR^\ell$, for a.e.\ $(t,s)\in\Delta^\comp[0,\infty)$. Here, $K_{\tilde{b},\bX},K_{\tilde{b},u}\in L^{1,*}(0,\infty;\bR_+)$ and $K_{\tilde{\sigma},\bX},K_{\tilde{\sigma},u}\in L^{2,*}(0,\infty;\bR_+)$ are defined by
\begin{align*}
	&K_{\tilde{f},\bX}(\tau):=K_{f,x_1}(\tau)+K_{f,x_2}(\tau)+\1_{[\delta,\infty)}(\tau)\bigl\{K_{f,x_1}(\tau-\delta)+K_{f,x_2}(\tau-\delta)\bigr\},\\
	&K_{\tilde{f},u}(\tau):=K_{f,u}(\tau)+\1_{[\delta,\infty)}(\tau)K_{f,u}(\tau-\delta),
\end{align*}
for $\tau\geq0$ and $f=b,\sigma$. Under the above notations, we have
\begin{align*}
	[K_{\tilde{b},\bX}]_1(\rho)&=\int^\infty_0e^{-\rho\tau}K_{\tilde{b},\bX}(\tau)\rd\tau\\
	&=\int^\infty_0e^{-\rho\tau}K_{b,x_1}(\tau)\rd\tau+\int^\infty_0e^{-\rho\tau}K_{b,x_2}(\tau)\rd\tau\\
	&\hspace{1cm}+\int^\infty_\delta e^{-\rho\tau}K_{b,x_1}(\tau-\delta)\rd\tau+\int^\infty_\delta e^{-\rho\tau}K_{b,x_2}(\tau-\delta)\rd\tau\\
	&=(1+e^{-\rho\delta})([K_{b,x_1}]_1(\rho)+[K_{b,x_2}]_1(\rho))
\end{align*}
and
\begin{align*}
	[K_{\tilde{\sigma},\bX}]_2(\rho)&=\Bigl(\int^\infty_0e^{-2\rho\tau}K_{\tilde{\sigma},\bX}(\tau)^2\rd\tau\Bigr)^{1/2}\\
	&\leq\Bigl(\int^\infty_0e^{-2\rho\tau}K_{\sigma,x_1}(\tau)^2\rd\tau\Bigr)^{1/2}+\Bigl(\int^\infty_0e^{-2\rho\tau}K_{\sigma,x_2}(\tau)^2\rd\tau\Bigr)^{1/2}\\
	&\hspace{1cm}+\Bigl(\int^\infty_\delta e^{-2\rho\tau}K_{\sigma,x_1}(\tau-\delta)^2\rd\tau\Bigr)^{1/2}+\Bigl(\int^\infty_\delta e^{-2\rho\tau}K_{\sigma,x_2}(\tau-\delta)^2\rd\tau\Bigr)^{1/2}\\
	&=(1+e^{-\rho\delta})([K_{\sigma,x_1}]_2(\rho)+[K_{\sigma,x_2}]_2(\rho))
\end{align*}
for $\rho\in\bR$. Similarly, we have
\begin{equation*}
	[K_{\tilde{b},u}]_1(\rho)=(1+e^{-\rho\delta})[K_{b,u}]_1(\rho)\ \text{and}\ [K_{\tilde{\sigma},u}]_2(\rho)\leq(1+e^{-\rho\delta})[K_{\sigma,u}]_2(\rho),\ \rho\in\bR.
\end{equation*}
Therefore, as to the constant \eqref{SVIE: criterion} for the auxiliary SVIE~\eqref{SDVIE: aux_state}, we can take
\begin{equation}\label{SDVIE: criterion}
	\rho_{b,\sigma;x_1,x_2,u}:=\inf\left\{\rho\in\bR_+\relmiddle|\begin{aligned}&[K_{b,u}]_1(\rho)+[K_{\sigma,u}]_2(\rho)<\infty,\\&(1+e^{-\rho\delta})\bigl\{[K_{b,x_1}]_1(\rho)+[K_{b,x_2}]_1(\rho)+[K_{\sigma,x_1}]_2(\rho)+[K_{\sigma,x_2}]_2(\rho)\bigr\}\leq1\end{aligned}\right\}.
\end{equation}
Suppose that $\mu$ and $\lambda$ satisfy $\mu>\rho_{b,\sigma;x_1,x_2,u}$ and $\lambda\geq2\mu$. Then by Proposition~2.3 in \cite{Ha21+}, for any $u(\cdot)\in\cU_{-\mu}$, there exists a unique solution $\bX^u(\cdot)\in L^{2,-\mu}_\bF(0,\infty;\bR^{2n})$ to the auxiliary SVIE~\eqref{SDVIE: aux_state}, and the auxiliary cost functional \eqref{SDVIE: aux_cost} is well-defined. Equivalently, there exists a unique solution $X^u(\cdot)\in L^{2,-\mu}_\bF(0,\infty;\bR^n)$ to the controlled SDVIE~\eqref{1: state}, and the original cost functional~\eqref{1: cost} is well-defined.

We apply the results in \cref{sec: SVIE} to the auxiliary control problem \eqref{SDVIE: aux_state}--\eqref{SDVIE: aux_cost}. The auxiliary Hamiltonian functional $\tilde{H}_\lambda:[0,\infty)\times\bR^{2n}\times U\times L^{2,-\mu}(0,\infty;\bR^{2n})\times L^{2,-\mu}(0,\infty;\bR^{(2n)\times d})\to\bR$ becomes
\begin{align*}
	&\tilde{H}_\lambda(t,\bX,u,p(\cdot),q(\cdot))=\tilde{h}(t,\bX,u)+\int^\infty_te^{-\lambda(s-t)}\Bigl\{\langle \tilde{b}(s,t,\bX,u),p(s)\rangle+\sum^d_{k=1}\langle\tilde{\sigma}^k(s,t,\bX,u),q^k(s)\rangle\Bigr\}\rd s\\
	&=h(t,J_1\bX,J_2\bX,u)+\int^\infty_te^{-\lambda(s-t)}\{\langle b(s,t,J_1\bX,J_2\bX,u),J_1p(s)\rangle\\
	&\hspace{6cm}+\1_{[\delta,\infty)}(s-t)\langle b(s-\delta,t,J_1\bX,J_2\bX,u),J_2p(s)\rangle\}\rd s\\
	&\hspace{1cm}+\sum^d_{k=1}\int^\infty_te^{-\lambda(s-t)}\{\langle\sigma^k(s,t,J_1\bX,J_2\bX,u),J_1q^k(s)\rangle\\
	&\hspace{5cm}+\1_{[\delta,\infty)}(s-t)\langle\sigma^k(s-\delta,t,J_1\bX,J_2\bX,u),J_2q^k(s)\rangle\}\rd s\\
	&=h(t,J_1\bX,J_2\bX,u)+\int^\infty_te^{-\lambda(s-t)}\langle b(s,t,J_1\bX,J_2\bX,u),J_1p(s)+e^{-\lambda\delta}J_2p(s+\delta)\rangle\rd s\\
	&\hspace{1cm}+\sum^d_{k=1}\int^\infty_te^{-\lambda(s-t)}\langle\sigma^k(s,t,J_1\bX,J_2\bX,u),J_1q^k(s)+e^{-\lambda\delta}J_2q^k(s+\delta)\rangle\rd s,
\end{align*}
where in the last equality we used the change of variable formula. Fix an arbitrary control process $\hat{u}(\cdot)\in\cU_{-\mu}$, and denote the corresponding state process (resp., auxiliary state process) by $\hat{X}(\cdot):=X^{\hat{u}}(\cdot)$ (resp., $\hat{\bX}(\cdot):=\bX^{\hat{u}}(\cdot)$). The auxiliary adjoint equation
\begin{equation}\label{SDVIE: aux_AE}
	\hat{\bY}(t)=\partial_\bX\tilde{H}_\lambda(t,\hat{\bX}(t),\hat{u}(t),\bE_t[\hat{\bY}(\cdot)],\hat{\bZ}(\cdot,t)),\ t\geq0,
\end{equation}
admits a unique adapted M-solution $(\hat{\bY}(\cdot),\hat{\bZ}(\cdot,\cdot))\in\cM^{2,-\mu}_\bF(0,\infty;\bR^{2n}\times\bR^{(2n)\times d})$. Also, the optimality condition for the auxiliary problem \eqref{SDVIE: aux_state}--\eqref{SDVIE: aux_cost} is
\begin{equation}\label{SDVIE: aux_OC}
	\langle\partial_u\tilde{H}_\lambda(t,\hat{\bX}(t),\hat{u}(t),\bE_t[\hat{\bY}(\cdot)],\hat{\bZ}(\cdot,t)),u-\hat{u}(t)\rangle\geq0,\ \forall\,u\in U.
\end{equation}

Now we further proceed the above investigation and obtain the necessary and sufficient maximum principles for the original problem \eqref{1: state}--\eqref{1: cost}. For $i=1,2$, defining $\hat{Y}_i(\cdot):=J_i\hat{\bY}(\cdot)$ and $\hat{Z}_i(\cdot,\cdot):=J_i\hat{\bZ}(\cdot,\cdot)$, we see that $(\hat{Y}_i(\cdot),\hat{Z}_i(\cdot,\cdot))$ is in $\cM^{2,-\mu}_\bF(0,\infty;\bR^n\times\bR^{n\times d})$ and solves the following equation:
\begin{equation}\label{SDVIE: AE_component}
\begin{split}
	\hat{Y}_i(t)&=h_{x_i}(t)+\int^\infty_te^{-\lambda(s-t)}b_{x_i}(s,t)^\top\bE_t[\hat{Y}_1(s)+e^{-\lambda\delta}\hat{Y}_2(s+\delta)]\rd s\\
	&\hspace{1cm}+\sum^d_{k=1}\int^\infty_te^{-\lambda(s-t)}\sigma^k_{x_i}(s,t)^\top\{\hat{Z}^k_1(s,t)+e^{-\lambda\delta}\hat{Z}^k_2(s+\delta,t)\}\rd s,\ t\geq0,
\end{split}
\end{equation}
where $h_{x_i}(t):=\partial_{x_i}h(t,J_1\hat{\bX}(t),J_2\hat{\bX}(t),\hat{u}(t))=\partial_{x_i}h(t,\hat{X}(t),\hat{X}(t-\delta),\hat{u}(t))$, $t\geq0$, and
\begin{equation*}
	f_{x_i}(t,s):=\partial_{x_i}f(t,s,J_1\hat{\bX}(s),J_2\hat{\bX}(s),\hat{u}(s))=\partial_{x_i}f(t,s,\hat{X}(s),\hat{X}(s-\delta),\hat{u}(s)),\ (t,s)\in\Delta^\comp[0,\infty),
\end{equation*}
for $f=b,\sigma^k$, $k=1,\dots,d$. Define
\begin{equation*}
	\hat{Y}(t):=\hat{Y}_1(t)+e^{-\lambda\delta}\bE_t[\hat{Y}_2(t+\delta)]\ \text{and}\ \hat{Z}(t,s):=\hat{Z}_1(t,s)+e^{-\lambda\delta}\hat{Z}_2(t+\delta,s)
\end{equation*}
for $(t,s)\in[0,\infty)^2$. Since $(\hat{Y}_i(\cdot),\hat{Z}_i(\cdot,\cdot))\in\cM^{2,-\mu}_\bF(0,\infty;\bR^n\times\bR^{n\times d})$, we have
\begin{align*}
	\hat{Y}(t)&=\bE[\hat{Y}_1(t)]+\int^t_0\hat{Z}_1(t,s)\rd W(s)+e^{-\lambda\delta}\Bigl\{\bE[\hat{Y}_2(t+\delta)]+\int^t_0\hat{Z}_2(t+\delta,s)\rd W(s)\Bigr\}\\
	&=\bE[\hat{Y}(t)]+\int^t_0\hat{Z}(t,s)\rd W(s)
\end{align*}
for a.e.\ $t\geq0$, a.s., which implies that $(\hat{Y}(\cdot),\hat{Z}(\cdot,\cdot))$ is in $\cM^{2,-\mu}_\bF(0,\infty;\bR^n\times\bR^{n\times d})$. Furthermore, by \eqref{SDVIE: AE_component}, it holds that
\begin{align*}
	\hat{Y}(t)&=h_{x_1}(t)+\int^\infty_te^{-\lambda(s-t)}\Bigl\{b_{x_1}(s,t)^\top\bE_t[\hat{Y}(s)]+\sum^d_{k=1}\sigma^k_{x_1}(s,t)^\top\hat{Z}^k(s,t)\Bigr\}\rd s\\
	&\hspace{0.3cm}+e^{-\lambda\delta}\bE_t\Bigl[h_{x_2}(t+\delta)+\int^\infty_{t+\delta}e^{-\lambda(s-t-\delta)}\Bigl\{b_{x_2}(s,t+\delta)^\top\bE_{t+\delta}[\hat{Y}(s)]+\sum^d_{k=1}\sigma^k_{x_2}(s,t+\delta)^\top\hat{Z}^k(s,t+\delta)\Bigr\}\rd s\Bigr]\\
	&=\bE_t\Bigl[h_{x_1}(t)+e^{-\lambda\delta}h_{x_2}(t+\delta)+\int^\infty_te^{-\lambda(s-t)}\Bigl\{b_{x_1}(s,t)^\top\hat{Y}(s)+\sum^d_{k=1}\sigma^k_{x_1}(s,t)^\top\hat{Z}^k(s,t)\\
	&\hspace{3cm}+e^{-\lambda\delta}\bE_s\Bigl[b_{x_2}(s+\delta,t+\delta)^\top\hat{Y}(s+\delta)+\sum^d_{k=1}\sigma^k_{x_2}(s+\delta,t+\delta)^\top\hat{Z}^k(s+\delta,t+\delta)\Bigr]\Bigr\}\rd s\Bigr].
\end{align*}
By redefining $\hat{Z}(t,s)$ for $(t,s)\in\Delta[0,\infty)$ (which is uniquely determined by $\hat{Y}(t)$ and $\hat{Z}(t,s)$ for $(t,s)\in\Delta^\comp[0,\infty)$), we see that $(\hat{Y}(\cdot),\hat{Z}(\cdot,\cdot))\in\cM^{2,-\mu}_\bF(0,\infty;\bR^n\times\bR^{n\times d})$ solves the following infinite horizon \emph{anticipated BSVIE} (ABSVIE, for short):
\begin{equation}\label{SDVIE: AE}
\begin{split}
	\hat{Y}(t)&=h_{x_1}(t)+e^{-\lambda\delta}h_{x_2}(t+\delta)+\int^\infty_te^{-\lambda(s-t)}\Bigl\{b_{x_1}(s,t)^\top\hat{Y}(s)+\sum^d_{k=1}\sigma^k_{x_1}(s,t)^\top\hat{Z}^k(s,t)\\
	&\hspace{2cm}+e^{-\lambda\delta}\bE_s\Bigl[b_{x_2}(s+\delta,t+\delta)^\top\hat{Y}(s+\delta)+\sum^d_{k=1}\sigma^k_{x_2}(s+\delta,t+\delta)^\top\hat{Z}^k(s+\delta,t+\delta)\Bigr]\Bigr\}\rd s\\
	&\hspace{1cm}-\int^\infty_t\hat{Z}(t,s)\rd W(s),\ t\geq0.
\end{split}
\end{equation}
Conversely, if $(\hat{Y}(\cdot),\hat{Z}(\cdot,\cdot))\in\cM^{2,-\mu}_\bF(0,\infty;\bR^n\times\bR^{n\times d})$ satisfies the infinite horizon ABSVIE \eqref{SDVIE: AE}, then the pair $(\hat{\bY}(\cdot),\hat{\bZ}(\cdot,\cdot))\in\cM^{2,-\mu}_\bF(0,\infty;\bR^{2n}\times\bR^{(2n)\times d})$ defined by
\begin{align*}
	J_i\hat{\bY}(t):=h_{x_i}(t)+\int^\infty_te^{-\lambda(s-t)}\Bigl\{b_{x_i}(s,t)^\top\bE_t[\hat{Y}(s)]+\sum^d_{k=1}\sigma^k_{x_i}(s,t)^\top\hat{Z}^k(s,t)\Bigr\}\rd s,\ t\geq0,\ i=1,2,
\end{align*}
and $\hat{\bY}(t)=\bE[\hat{\bY}(t)]+\int^t_0\hat{\bZ}(t,s)\rd W(s)$, $t\geq0$, satisfies the relations
\begin{equation}\label{SDVIE: ABSVIE_BSVIE}
	\hat{Y}(t)=J_1\hat{\bY}(t)+e^{-\lambda\delta}\bE_t[J_2\hat{\bY}(t+\delta)]\ \text{and}\ \hat{Z}(t,s)=J_1\hat{\bZ}(t,s)+e^{-\lambda\delta}J_2\hat{\bZ}(t+\delta,s),
\end{equation}
for $(t,s)\in\Delta^\comp[0,\infty)$. Therefore, we see that $(\hat{\bY}(\cdot),\hat{\bZ}(\cdot,\cdot))\in\cM^{2,-\mu}_\bF(0,\infty;\bR^{2n}\times\bR^{(2n)\times d})$ is an adapted M-solution to the infinite horizon (standard) BSVIE \eqref{SDVIE: aux_AE}. By the existence and uniqueness of the adapted M-solution to the infinite horizon (standard) BSVIE (see Theorem~3.7 in \cite{Ha21+}), we see that the infinite horizon ABSVIE \eqref{SDVIE: AE} admits a unique adapted M-solution $(\hat{Y}(\cdot),\hat{Z}(\cdot,\cdot))\in\cM^{2,-\mu}_\bF(0,\infty;\bR^n\times\bR^{n\times d})$ given by \eqref{SDVIE: ABSVIE_BSVIE}. Furthermore, the optimality condition \eqref{SDVIE: aux_OC} becomes
\begin{equation}\label{SDVIE: OC}
	\Bigl\langle h_u(t)+\int^\infty_te^{-\lambda(s-t)}\Bigl\{b_u(s,t)^\top\bE_t[\hat{Y}(s)]+\sum^d_{k=1}\sigma^k_u(s,t)^\top\hat{Z}^k(s,t)\Bigr\}\rd s,u-\hat{u}(t)\Bigr\rangle\geq0,\ \forall\,u\in U,
\end{equation}
where $h_u(t):=\partial_uh(t,\hat{X}(t),\hat{X}(t-\delta),\hat{u}(t))$ and
\begin{equation*}
	f_u(t,s):=\partial_uf(t,s,\hat{X}(s),\hat{X}(s-\delta),\hat{u}(s)),\ (t,s)\in\Delta^\comp[0,\infty),\ f=b,\sigma^k,\ k=1,\dots,d.
\end{equation*}
Consequently, we obtain the following (necessary and sufficient) maximum principles for discounted control problems of SDVIEs.


\begin{theo}[The necessary and sufficient maximum principles for SDVIEs]\label{SDVIE: theo_MP}
Suppose that the coefficients $b$, $\sigma$ and $h$ satisfy \cref{SDVIE: assum}. Let $\mu$ and $\lambda$ satisfy $\mu>\rho_{b,\sigma;x_1,x_2,u}$ and $\lambda\geq2\mu$, where $\rho_{b,\sigma;x_1,x_2,u}\in[0,\infty)$ is defined by \eqref{SDVIE: criterion}. Fix an arbitrary control process $\hat{u}(\cdot)\in\cU_{-\mu}$, and denote by $\hat{X}(\cdot):=X^{\hat{u}}(\cdot)$ the corresponding solution to the controlled SDVIE \eqref{1: state}. Also, let $(\hat{Y}(\cdot),\hat{Z}(\cdot,\cdot))\in \cM^{2,-\mu}_\bF(0,\infty;\bR^n\times\bR^{n\times d})$ be the unique adapted M-solution to the infinite horizon ABSVIE \eqref{SDVIE: AE}. Then the following hold:
\begin{itemize}
\item[(i)]
If $\hat{u}(\cdot)$ is an optimal control for the discounted control problem \eqref{1: state}--\eqref{1: cost}, then the optimality condition \eqref{SDVIE: OC} holds for a.e.\ $t\geq0$, a.s.
\item[(ii)]
If the map
\begin{align*}
	&\bR^n\times\bR^n\times U\ni(x_1,x_2,u)\mapsto h(t,x_1,x_2,u)\\
	&\hspace{1cm}+\int^\infty_te^{-\lambda(s-t)}\Bigl\{\langle b(s,t,x_1,x_2,u),\bE_t[\hat{Y}(s)]\rangle+\sum^d_{k=1}\langle\sigma^k(s,t,x_1,x_2,u),\hat{Z}^k(s,t)\rangle\Bigr\}\rd s\in\bR
\end{align*}
is convex for a.e.\ $t\geq0$, a.s., and if the optimality condition \eqref{SDVIE: OC} holds for a.e.\ $t\geq0$, a.s., then $\hat{u}(\cdot)$ is an optimal control for the discounted control problem \eqref{1: state}--\eqref{1: cost}.
\end{itemize}
\end{theo}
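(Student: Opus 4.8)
The plan is to read the theorem off from the SVIE maximum principles of \cref{sec: SVIE}, applied to the auxiliary (delay-free) control problem \eqref{SDVIE: aux_state}--\eqref{SDVIE: aux_cost} built from the lift-up $\bX^u=(X^u_1,X^u_2)^\top$. First I would verify that the auxiliary data $(\tilde\varphi,\tilde b,\tilde\sigma,\tilde h)$ satisfy \cref{SVIE: assum} with $n$ replaced by $2n$. The differentiability and Lipschitz estimates for $\tilde b,\tilde\sigma$, together with the bounds on $[K_{\tilde b,\bX}]_1$, $[K_{\tilde\sigma,\bX}]_2$, $[K_{\tilde b,u}]_1$, $[K_{\tilde\sigma,u}]_2$, were already established in the text, so the relevant constant \eqref{SVIE: criterion} for \eqref{SDVIE: aux_state} is dominated by $\rho_{b,\sigma;x_1,x_2,u}$ defined in \eqref{SDVIE: criterion}, whence $\mu$ is admissible and $\lambda\ge2\mu$; the growth bounds for $\tilde h$ and its derivatives are immediate from those for $h$ since $|J_i\bX|\le|\bX|$; and $\tilde\varphi\in L^{2,-\mu}_\bF(0,\infty;\bR^{2n})$ because $\varphi|_{[0,\infty)}\in L^{2,-\mu}_\bF$ and the second component $t\mapsto\varphi(t-\delta)$ is adapted and is a bounded continuous function on $[0,\delta]$ patched with an $e^{\mu\delta}$-reweighted copy of $\varphi|_{[0,\infty)}$ on $[\delta,\infty)$. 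Hence \cref{SVIE: prop_nMP,SVIE: prop_sMP} apply to \eqref{SDVIE: aux_state}--\eqref{SDVIE: aux_cost}.

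Since $\tilde h(t,\bX^u(t),u(t))=h(t,X^u(t),X^u(t-\delta),u(t))$, the cost functionals \eqref{1: cost} and \eqref{SDVIE: aux_cost} coincide for every $u(\cdot)\in\cU_{-\mu}$, so $\hat u(\cdot)$ is optimal for the problem \eqref{1: state}--\eqref{1: cost} if and only if it is optimal for the auxiliary problem. For (i): assuming $\hat u(\cdot)$ optimal, \cref{SVIE: prop_nMP} gives the auxiliary optimality condition \eqref{SDVIE: aux_OC}; differentiating the displayed formula for $\tilde H_\lambda$ in $u$ and substituting the relations \eqref{SDVIE: ABSVIE_BSVIE} — which express $(\hat Y(\cdot),\hat Z(\cdot,\cdot))$ in terms of the auxiliary adapted M-solution $(\hat\bY(\cdot),\hat\bZ(\cdot,\cdot))$ of \eqref{SDVIE: aux_AE}, and which, as shown above, correspond exactly to the unique adapted M-solution of the ABSVIE \eqref{SDVIE: AE} — turns \eqref{SDVIE: aux_OC} into \eqref{SDVIE: OC}, as already noted in the text. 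This proves (i).

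For (ii): the convexity hypothesis in the statement is equivalent to convexity of the map $(\bX,u)\mapsto\tilde H_\lambda(t,\bX,u,\bE_t[\hat\bY(\cdot)],\hat\bZ(\cdot,t))$; indeed $\tilde H_\lambda$ depends on $\bX$ only through the linear bijection $\bX\mapsto(J_1\bX,J_2\bX)$ of $\bR^{2n}$ onto $\bR^n\times\bR^n$, and, after the same change of variable $s\mapsto s+\delta$ and reweighting by $e^{-\lambda\delta}$ used to derive \eqref{SDVIE: AE}, it agrees up to this relabelling with the map in (ii), and composition with a linear bijection preserves convexity. Together with the optimality condition \eqref{SDVIE: OC} (equivalent to \eqref{SDVIE: aux_OC} by the computation in (i)), \cref{SVIE: prop_sMP} yields optimality of $\hat u(\cdot)$ for the auxiliary problem, hence for \eqref{1: state}--\eqref{1: cost}.

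The only real work is the bookkeeping in the translation: checking that the $s\mapsto s+\delta$ shift and the $e^{-\lambda\delta}$ weights are handled consistently so that \eqref{SDVIE: aux_OC} and the auxiliary convexity condition become \emph{exactly} \eqref{SDVIE: OC} and the convexity condition in (ii), and that the $(\hat Y,\hat Z)$ produced from $(\hat\bY,\hat\bZ)$ via \eqref{SDVIE: ABSVIE_BSVIE} is the same as the adapted M-solution of \eqref{SDVIE: AE} named in the statement. All of these rearrangements already appear in the derivation of the displayed form of $\tilde H_\lambda$ and of \eqref{SDVIE: AE} from \eqref{SDVIE: aux_AE}, so no new analytic difficulty arises; once they are in place the theorem follows by invoking \cref{SVIE: prop_nMP,SVIE: prop_sMP}.
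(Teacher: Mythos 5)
Your proposal is correct and follows essentially the same route as the paper: the theorem is obtained by applying \cref{SVIE: prop_nMP,SVIE: prop_sMP} to the lifted auxiliary problem \eqref{SDVIE: aux_state}--\eqref{SDVIE: aux_cost} and translating the adjoint equation and optimality condition back via \eqref{SDVIE: ABSVIE_BSVIE}, which is exactly the derivation the paper carries out in the text preceding the theorem. Your additional checks (that $\tilde\varphi$, $\tilde b$, $\tilde\sigma$, $\tilde h$ satisfy \cref{SVIE: assum} and that convexity is preserved under the linear bijection $\bX\mapsto(J_1\bX,J_2\bX)$) are the right bookkeeping and are consistent with what the paper implicitly uses.
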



\begin{rem}\label{SDVIE: rem_ABSVIE}
The adjoint equation \eqref{SDVIE: AE} can be written as
\begin{equation}\label{SDVIE: ABSVIE}
	Y(t)=\psi(t)+\int^\infty_te^{-\lambda(s-t)}g(t,s,Y(s),Z(s,t),Y(s+\delta),Z(s+\delta,t+\delta))\rd s-\int^\infty_tZ(t,s)\rd W(s),\ t\geq0,
\end{equation}
for suitable free term $\psi(\cdot)$ and driver $g$. This is a novel class of (infinite horizon) ABSVIE, which is a BSVIE whose driver depends on the ``anticipated terms'' $Y(s+\delta)$ and $Z(s+\delta,t+\delta)$. Indeed, Wen and Shi~\cite{WeSh20} studied the well-posedness of (finite horizon) ABSVIEs of the following form:
\begin{equation*}
	\begin{dcases}
	Y(t)=\psi(t)+\int^T_tg(t,s,Y(s),Z(t,s),Z(s,t),Y(s+\delta),Z(t,s+\delta),Z(s+\delta,t))\rd s\\
	\hspace{2cm}-\int^T_tZ(t,s)\rd W(s),\ t\in[0,T],\\
	Y(t)=\psi(t),\ t\in[T,T+\delta],\ Z(t,s)=\eta(t,s),\ (t,s)\in[0,T+\delta]^2\setminus[0,T]^2,
	\end{dcases}
\end{equation*}
where $\psi(\cdot)$ and $\eta(\cdot,\cdot)$ are some given processes. Besides the fact that \eqref{SDVIE: ABSVIE} is defined on the infinite horizon, it is beyond the class of \cite{WeSh20} since the driver of our ABSVIE~\eqref{SDVIE: ABSVIE} depends on the new anticipated term $Z(s+\delta,t+\delta)$ for $(t,s)\in\Delta[0,\infty)$. \cref{SDVIE: theo_MP} shows that (infinite horizon) ABSVIEs of the form \eqref{SDVIE: ABSVIE} are appropriate tools to derive the maximum principle for SDVIEs. It is remarkable that the infinite horizon ABSVIE~\eqref{SDVIE: AE} can be obtained by an appropriate transformation of the infinite horizon (non-anticipated) BSVIE~\eqref{SDVIE: aux_AE} which is the adjoint equation for the auxiliary problem \eqref{SDVIE: aux_state}--\eqref{SDVIE: aux_cost} of an SVIE (without delay).
\end{rem}


\section{Discounted control problems for SDDEs}\label{sec: SDDE}

In this section, we apply the above results to discounted control problems of stochastic delay differential equations (SDDEs, for short). In finite horizon settings, similar problems were studied by Chen and Wu~\cite{ChWu10} and {\O}ksendal, Sulem, and Zhang~\cite{OkSuZh11}. However, our method deriving the maximum principle is different from \cite{ChWu10,OkSuZh11} since it is based on BSVIE theory discussed in Sections~\ref{sec: SVIE} and \ref{sec: SDVIE}.

Suppose that the state process $X^u(\cdot)$ solves the controlled SDDE~\eqref{1: SDDE_state}, which we rewrite for readers' convenience:
\begin{equation*}
	\begin{dcases}
	\mathrm{d}X^u(t)=b(t,X^u(t),X^u(t-\delta),u(t))\rd t+\sigma(t,X^u(t),X^u(t-\delta),u(t))\rd W(t),\ t\geq0,\\
	X^u(t)=\varphi(t),\ t\in[-\delta,0].
	\end{dcases}
\end{equation*}
Here, $\delta\geq0$ is a given constant, $(\varphi(t))_{t\in[-\delta,0]}$ is a given initial condition which is assumed to be an $\bR^n$-valued deterministic function, and $b:[0,\infty)\times\bR^n\times\bR^n\times\bR^\ell\to\bR^n$ and $\sigma:[0,\infty)\times\bR^n\times\bR^n\times\bR^\ell\to\bR^{n\times d}$ are given measurable maps. We impose the following assumptions on the coefficients.


\begin{assum}\label{SDDE: assum}
\begin{itemize}
\item[(i)]
$(\varphi(t))_{t\in[-\delta,0]}$ is a deterministic continuous function on $[-\delta,0]$ with values in $\bR^n$.
\item[(ii)]
$b(s,x_1,x_2,u)$ and $\sigma(s,x_1,x_2,u)$ are continuously differentiable with respect to $(x_1,x_2,u)\in\bR^n\times\bR^n\times\bR^\ell$ for a.e.\ $s\geq0$; there exist six constants $L_{b,x_1},L_{b,x_2},L_{b,u},L_{\sigma,x_1},L_{\sigma,x_2},L_{\sigma,u}>0$ such that
\begin{align*}
	&|b(s,x_1,x_2,u)-b(s,x'_1,x'_2,u')|\leq L_{b,x_1}|x_1-x'_1|+L_{b,x_2}|x_2-x'_2|+L_{b,u}|u-u'|,\\
	&|\sigma(s,x_1,x_2,u)-\sigma(s,x'_1,x'_2,u')|\leq L_{\sigma,x_1}|x_1-x'_1|+L_{\sigma,x_2}|x_2-x'_2|+L_{\sigma,u}|u-u'|,
\end{align*}
for any $x_1,x'_1,x_2,x'_2\in\bR^n$ and $u,u'\in\bR^\ell$, for a.e.\ $s\geq0$; it holds that
\begin{equation*}
	\int^\infty_0e^{-\mu s}|b(s,0,0,0)|\rd s+\int^\infty_0e^{-2\mu s}|\sigma(s,0,0,0)|^2\rd s<\infty.
\end{equation*}
\end{itemize}
\end{assum}

Consider the cost functional \eqref{1: cost} with a given discount rate $\lambda\in\bR$ and a given measurable function $h:[0,\infty)\times\bR^n\times\bR^n\times\bR^\ell\to\bR$ satisfying the condition (iii) in \cref{SDVIE: assum}. Our problem is to minimize the discounted cost functional \eqref{1: cost} over all control processes $u(\cdot)\in\cU_{-\mu}$, subject to the state equation given by the controlled SDDE~\eqref{1: SDDE_state}.

The control problem \eqref{1: SDDE_state}--\eqref{1: cost} is a special case of the framework in \cref{sec: SDVIE}. In this case, we can take $K_{f,\xi}(\tau)=L_{f,\xi}$ for any $\tau\geq0$, $f=b,\sigma$, and $\xi=x_1,x_2,u$. Thus, simple calculations show that the constant \eqref{SDVIE: criterion} becomes $\rho_{b,\sigma;x_1,x_2,u}=\rho_1:=\inf\bigl\{\rho>0\,|\,(1+e^{-\rho\delta})\bigl(\frac{L_{b,x_1}+L_{b,x_2}}{\rho}+\frac{L_{\sigma,x_1}+L_{\sigma,x_2}}{\sqrt{2\rho}}\bigr)\leq1\bigr\}$. Suppose that the constants $\mu$ and $\lambda$ satisfy $\mu>\rho_1$ and $\lambda\geq2\mu$. Then for any $u(\cdot)\in\cU_{-\mu}$, there exists a unique solution $X^u(\cdot)\in L^{2,-\mu}_\bF(0,\infty;\bR^n)$ to the controlled SDDE~\eqref{1: SDDE_state}, and the cost functional \eqref{1: cost} is well-defined.

Fix an arbitrary control process $\hat{u}(\cdot)$, and denote the corresponding state process by $\hat{X}(\cdot):=X^{\hat{u}}(\cdot)$. In this section, we use the notations $f_{\xi}(t):=\partial_\xi f(t,\hat{X}(t),\hat{X}(t-\delta),\hat{u}(t))$ for each $t\geq0$, $\xi=x_1,x_2,u$, and $f=h,b,\sigma^k$, $k=1,\dots,d$. By taking the conditional expectations and applying the change of variable formula, we see that the infinite horizon ABSVIE~\eqref{SDVIE: AE} becomes
\begin{align*}
	\hat{Y}(t)&=h_{x_1}(t)+b_{x_1}(t)^\top\int^\infty_te^{-\lambda(s-t)}\bE_t[\hat{Y}(s)]\rd s+\sum^d_{k=1}\sigma^k_{x_1}(t)^\top\int^\infty_te^{-\lambda(s-t)}\hat{Z}^k(s,t)\rd s\\
	&\hspace{0.3cm}+e^{-\lambda\delta}\bE_t\Bigl[h_{x_2}(t+\delta)+b_{x_2}(t+\delta)^\top\int^\infty_{t+\delta}e^{-\lambda(s-t-\delta)}\bE_{t+\delta}[\hat{Y}(s)]\\
	&\hspace{3cm}+\sum^d_{k=1}\sigma^k_{x_2}(t+\delta)^\top\int^\infty_{t+\delta}e^{-\lambda(s-t-\delta)}\hat{Z}^k(s,t+\delta)\rd s\Bigr],\ t\geq0,
\end{align*}
which admits a unique adapted M-solution $(\hat{Y}(\cdot),\hat{Z}(\cdot,\cdot))\in\cM^{2,-\mu}_\bF(0,\infty;\bR^n\times\bR^{n\times d})$. Defining
\begin{equation}\label{SDDE: ABSDE_BSVIE}
	\hat{\cY}(t):=\int^\infty_te^{-\lambda(s-t)}\bE_t[\hat{Y}(s)]\rd s\ \text{and}\ \hat{\cZ}(t):=\int^\infty_te^{-\lambda(s-t)}\hat{Z}(s,t)\rd s
\end{equation}
for $t\geq0$, the above equation is simplified to
\begin{equation}\label{SDDE: AE_component}
\begin{split}
	\hat{Y}(t)&=h_{x_1}(t)+b_{x_1}(t)^\top\hat{\cY}(t)+\sum^d_{k=1}\sigma^k_{x_1}(t)^\top\hat{\cZ}^k(t)\\
	&+e^{-\lambda\delta}\bE_t\Bigl[h_{x_2}(t+\delta)+b_{x_2}(t+\delta)^\top\hat{\cY}(t+\delta)+\sum^d_{k=1}\sigma^k_{x_2}(t+\delta)^\top\hat{\cZ}^k(t+\delta)\Bigr],\ t\geq0.
\end{split}
\end{equation}
On the other hand, Lemma~3.16 in \cite{Ha21+} yields that $(\hat{\cY}(\cdot),\hat{\cZ}(\cdot))$ is in $L^{2,-\mu}_\bF(0,\infty;\bR^n)\times L^{2,-\mu}_\bF(0,\infty;\bR^{n\times d})$ and that it is the unique adapted solution to the infinite horizon backward stochastic differential equation (BSDE, for short):
\begin{align*}
	\mathrm{d}\hat{\cY}(t)&=-\{\hat{Y}(t)-\lambda\hat{\cY}(t)\}\rd t+\hat{\cZ}(t)\rd W(t),\ t\geq0.
\end{align*}
By inserting the formula \eqref{SDDE: AE_component} into the above equation, we obtain the following \emph{infinite horizon anticipated BSDE} (ABSDE, for short):
\begin{equation}\label{SDDE: AE}
\begin{split}
	\mathrm{d}\hat{\cY}(t)&=-\Bigl\{h_{x_1}(t)+b_{x_1}(t)^\top\hat{\cY}(t)+\sum^d_{k=1}\sigma^k_{x_1}(t)^\top\hat{\cZ}^k(t)\\
	&\hspace{1cm}+e^{-\lambda\delta}\bE_t\Bigl[h_{x_2}(t+\delta)+b_{x_2}(t+\delta)^\top\hat{\cY}(t+\delta)+\sum^d_{k=1}\sigma^k_{x_2}(t+\delta)^\top\hat{\cZ}^k(t+\delta)\Bigr]\Bigr\}\rd t\\
	&\hspace{0.5cm}+\lambda\hat{\cY}(t)\rd t+\hat{\cZ}(t)\rd W(t),\ t\geq0.
\end{split}
\end{equation}
By construction, together with Lemma~3.16 in \cite{Ha21+}, the above infinite horizon ABSDE has a unique adapted solution $(\hat{\cY}(\cdot),\hat{\cZ}(\cdot))\in L^{2,-\mu}_\bF(0,\infty;\bR^n)\times L^{2,-\mu}_\bF(0,\infty;\bR^{n\times d})$ given by \eqref{SDDE: ABSDE_BSVIE}. Furthermore, the optimality condition \eqref{SDVIE: OC} becomes
\begin{equation}\label{SDDE: OC}
	\Bigl\langle h_u(t)+b_u(t)^\top\hat{\cY}(t)+\sum^d_{k=1}\sigma^k_u(t)^\top\hat{\cZ}^k(t),u-\hat{u}(t)\Bigr\rangle\geq0,\ \forall\,u\in U.
\end{equation}

Consequently, we obtain the following (necessary and sufficient) maximum principles for discounted control problems of SDDEs.


\begin{theo}[The necessary and sufficient maximum principles for SDDEs]\label{SDDE: theo_MP}
Suppose that $b$ and $\sigma$ satisfy \cref{SDDE: assum}, and $h$ satisfies the condition (iii) in \cref{SDVIE: assum}. Let $\mu$ and $\lambda$ satisfy $\mu>\rho_1$ and $\lambda\geq2\mu$. Fix an arbitrary control process $\hat{u}(\cdot)\in\cU_{-\mu}$, and denote by $\hat{X}(\cdot):=X^{\hat{u}}(\cdot)$ the corresponding solution to the controlled SDDE \eqref{1: SDDE_state}. Also, let $(\hat{\cY}(\cdot),\hat{\cZ}(\cdot))\in L^{2,-\mu}_\bF(0,\infty;\bR^n)\times L^{2,-\mu}_\bF(0,\infty;\bR^{n\times d})$ be the unique adapted solution to the infinite horizon ABSDE \eqref{SDDE: AE}. Then the following hold:
\begin{itemize}
\item[(i)]
If $\hat{u}(\cdot)$ is an optimal control for the discounted control problem \eqref{1: SDDE_state}--\eqref{1: cost}, then the optimality condition \eqref{SDDE: OC} holds for a.e.\ $t\geq0$, a.s.
\item[(ii)]
If the map
\begin{equation*}
	\bR^n\times\bR^n\times U\ni(x_1,x_2,u)\mapsto h(t,x_1,x_2,u)+\langle b(t,x_1,x_2,u),\hat{\cY}(t)\rangle+\sum^d_{k=1}\langle\sigma^k(t,x_1,x_2,u),\hat{\cZ}^k(t)\rangle\in\bR
\end{equation*}
is convex for a.e.\ $t\geq0$, a.s., and if the optimality condition \eqref{SDDE: OC} holds for a.e.\ $t\geq0$, a.s., then $\hat{u}(\cdot)$ is an optimal control for the discounted control problem \eqref{1: SDDE_state}--\eqref{1: cost}.
\end{itemize}
\end{theo}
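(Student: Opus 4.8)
The plan is to obtain \cref{SDDE: theo_MP} as a direct specialization of \cref{SDVIE: theo_MP}, since the controlled SDDE \eqref{1: SDDE_state} is precisely the SDVIE \eqref{1: state} whose coefficients $b(t,s,x_1,x_2,u)$ and $\sigma(t,s,x_1,x_2,u)$ depend only on the inner variable $s$ and whose free term is of the form $\varphi(t)=\varphi(t\wedge0)$ (extended by $\varphi(t)=\varphi(0)$ for $t\geq0$). First I would check that \cref{SDDE: assum}, together with condition~(iii) of \cref{SDVIE: assum}, implies \cref{SDVIE: assum}: the Lipschitz kernels may be taken as the constants $K_{f,\xi}\equiv L_{f,\xi}$, which lie in $L^{1,*}(0,\infty;\bR_+)$ (resp.\ $L^{2,*}$) because $\int^\infty_0e^{-\rho\tau}\rd\tau<\infty$ for $\rho>0$; the restriction $\varphi(\cdot)|_{[0,\infty)}$ is a constant, hence in $L^{2,-\mu}_\bF(0,\infty;\bR^n)$ for $\mu>0$; and the growth bound on $(b(\cdot,0,0,0),\sigma(\cdot,0,0,0))$ in \cref{SDDE: assum}(ii) yields the required estimate $\int^\infty_0e^{-2\mu t}(\int^t_0|b(s,0,0,0)|\rd s)^2\rd t<\infty$ upon writing $e^{-\mu t}\int^t_0|b(s,0,0,0)|\rd s=\int^t_0e^{-\mu(t-s)}\bigl(e^{-\mu s}|b(s,0,0,0)|\bigr)\rd s$ and applying Young's convolution inequality with the $L^2$ kernel $\tau\mapsto e^{-\mu\tau}\1_{[0,\infty)}(\tau)$ and the $L^1$ function $s\mapsto e^{-\mu s}|b(s,0,0,0)|$ (and, trivially, the $L^2$ bound for $\sigma$). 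The elementary computation already displayed in the text gives $\rho_{b,\sigma;x_1,x_2,u}=\rho_1$, so the standing hypotheses $\mu>\rho_1$ and $\lambda\geq2\mu$ are exactly those of \cref{SDVIE: theo_MP}.

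Next I would organize the reduction of the adjoint equation, most of which has been carried out in the discussion preceding the theorem. Starting from the infinite horizon ABSVIE \eqref{SDVIE: AE} with $t$-independent coefficients, taking $\bE_t[\cdot]$ and using the change of variable $s\mapsto s+\delta$ gives the displayed equation for $\hat Y(\cdot)$ in terms of the pair $(\hat\cY(\cdot),\hat\cZ(\cdot))$ defined by \eqref{SDDE: ABSDE_BSVIE}. The key technical input is Lemma~3.16 in \cite{Ha21+}, which asserts that for the M-solution $(\hat Y(\cdot),\hat Z(\cdot,\cdot))\in\cM^{2,-\mu}_\bF(0,\infty;\bR^n\times\bR^{n\times d})$ the pair $(\hat\cY(\cdot),\hat\cZ(\cdot))$ lies in $L^{2,-\mu}_\bF(0,\infty;\bR^n)\times L^{2,-\mu}_\bF(0,\infty;\bR^{n\times d})$ and is the unique adapted solution of the linear infinite horizon BSDE $\mathrm{d}\hat\cY(t)=-\{\hat Y(t)-\lambda\hat\cY(t)\}\rd t+\hat\cZ(t)\rd W(t)$; substituting the expression \eqref{SDDE: AE_component} for $\hat Y(t)$ into this BSDE produces the infinite horizon ABSDE \eqref{SDDE: AE}. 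For the existence-and-uniqueness assertion of \cref{SDDE: theo_MP} I would verify that $(\hat Y,\hat Z)\mapsto(\hat\cY,\hat\cZ)$ is a bijection between M-solutions of \eqref{SDVIE: AE} and adapted solutions of \eqref{SDDE: AE}: given a solution $(\hat\cY,\hat\cZ)$ of \eqref{SDDE: AE}, defining $\hat Y(t)$ by the right-hand side of \eqref{SDDE: AE_component} and then $\hat Z(t,s)$ by the martingale representation of $\hat Y(t)$ (with the usual redefinition on $\Delta[0,\infty)$) recovers an M-solution of \eqref{SDVIE: AE}, and uniqueness transfers from the well-posedness already established for \eqref{SDVIE: AE}.

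Finally I would translate the optimality and convexity conditions. Specializing \eqref{SDVIE: OC} to $t$-independent coefficients and inserting \eqref{SDDE: ABSDE_BSVIE} turns $\int^\infty_te^{-\lambda(s-t)}b_u(s,t)^\top\bE_t[\hat Y(s)]\rd s$ and $\int^\infty_te^{-\lambda(s-t)}\sigma^k_u(s,t)^\top\hat Z^k(s,t)\rd s$ into $b_u(t)^\top\hat\cY(t)$ and $\sigma^k_u(t)^\top\hat\cZ^k(t)$, giving exactly \eqref{SDDE: OC}. The same pull-out of the ($s$-independent) maps $b(t,x_1,x_2,u)$ and $\sigma^k(t,x_1,x_2,u)$ out of the $\rd s$-integral shows that the convexity map in \cref{SDVIE: theo_MP}(ii) coincides, for a.e.\ $t\geq0$ and a.s., with $(x_1,x_2,u)\mapsto h(t,x_1,x_2,u)+\langle b(t,x_1,x_2,u),\hat\cY(t)\rangle+\sum^d_{k=1}\langle\sigma^k(t,x_1,x_2,u),\hat\cZ^k(t)\rangle$, which is the map in \cref{SDDE: theo_MP}(ii). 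Parts~(i) and (ii) then follow immediately from the corresponding parts of \cref{SDVIE: theo_MP}. I expect the only genuinely delicate points to be the invocation of Lemma~3.16 in \cite{Ha21+} and the verification that the ABSVIE-to-ABSDE correspondence is a solution-space–preserving bijection; the remainder is routine bookkeeping with the change of variable $s\mapsto s+\delta$ and Fubini's theorem.
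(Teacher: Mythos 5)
Your proposal is correct and follows essentially the same route as the paper: specialize \cref{SDVIE: theo_MP} to $t$-independent coefficients, verify that \cref{SDDE: assum} yields \cref{SDVIE: assum} with constant kernels so that $\rho_{b,\sigma;x_1,x_2,u}=\rho_1$, pass from the ABSVIE \eqref{SDVIE: AE} to the ABSDE \eqref{SDDE: AE} via the transformation \eqref{SDDE: ABSDE_BSVIE} and Lemma~3.16 of \cite{Ha21+}, and translate the optimality and convexity conditions. Your extra care in checking the integrability of $b(\cdot,0,0,0)$ via Young's inequality and in verifying that the ABSVIE-to-ABSDE correspondence is a bijection (which underlies the uniqueness claim for \eqref{SDDE: AE}) is consistent with, and slightly more explicit than, the paper's own discussion preceding the theorem.
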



\begin{rem}\label{SDDE: rem_ABSDE}
The above results generalize the frameworks of \cite{ChWu10,OkSuZh11} to the infinite horizon setting. The adjoint equation \eqref{SDDE: AE} is an infinite horizon ABSDE, which is an infinite horizon BSDE whose driver depends on the ``anticipated terms'' $\hat{\cY}(t+\delta)$ and $\hat{\cZ}(t+\delta)$ of the adapted solution. For detailed analysis on (finite horizon) nonlinear ABSDEs, see \cite{PeYa09}.
\end{rem}


\section{Discounted control problems for fractional SDDEs}\label{sec: FSDDE}

In this section, we consider discounted control problems for fractional SDEs with finite delay, which can be seen as a special case of SDVIEs. First, in Subsection~\ref{subsec: FSDDE_general}, we state the necessary and sufficient maximum principles for general Caputo fractional SDDEs. Then, in Subsection~\ref{subsec: FSDDE_LQ}, we apply the general results to a linear-quadratic (LQ, for short) regulator problem for a Caputo fractional SDDE and give an explicit expression for the optimal control.


\subsection{General results}\label{subsec: FSDDE_general}

Let $b:[0,\infty)\times\bR^n\times\bR^n\times\bR^\ell\to\bR^n$ and $\sigma:[0,\infty)\times\bR^n\times\bR^n\times\bR^\ell\to\bR^{n\times d}$ satisfy \cref{SDDE: assum}, and let a continuous function $(\varphi(t))_{t\in[-\delta,0]}$ be fixed. Let $\delta\geq0$ and $\alpha\in(\frac{1}{2},1]$. Consider the controlled Caputo fractional SDDE~\eqref{1: FSDDE_state}, which we rewrite for readers' convenience:
\begin{equation*}
	\begin{dcases}
	\CD X^u(t)=b(t,X^u(t),X^u(t-\delta),u(t))+\sigma(t,X^u(t),X^u(t-\delta),u(t))\frac{\mathrm{d}W(t)}{\mathrm{d}t},\ t\geq0,\\
	X^u(t)=\varphi(t),\ t\in[-\delta,0],
	\end{dcases}
\end{equation*}
where $\CD$ denotes the Caputo fractional derivative of order $\alpha$ defined by
\begin{equation*}
	\CD f(t):=\frac{1}{\Gamma(1-\alpha)}\frac{\mathrm{d}}{\mathrm{d}t}\int^t_0(t-s)^{-\alpha}\{f(s)-f(0)\}\rd s,\ t\geq0,
\end{equation*}
for suitable function $f:[0,\infty)\to\bR$. Here and elsewhere, $\Gamma(\alpha)=\int^\infty_0e^{-\tau}\tau^{\alpha-1}\rd\tau$ denotes the Gamma function. Following \cite{ZhAgLiPeYoZh17,MoZhLoTeMo20}, we say that $X^u(\cdot)\in L^{2,*}_\bF(0,\infty;\bR^n)$ is a solution of the controlled Caputo fractional SDDE~\eqref{1: FSDDE_state} if it holds that
\begin{equation*}
	\begin{dcases}
	X^u(t)=\varphi(0)+\frac{1}{\Gamma(\alpha)}\int^t_0(t-s)^{\alpha-1}b(s,X^u(s),X^u(s-\delta),u(s))\rd s\\
	\hspace{2cm}+\frac{1}{\Gamma(\alpha)}\int^t_0(t-s)^{\alpha-1}\sigma(s,X^u(s),X^u(s-\delta),u(s))\rd W(s),\ t\geq0,\\
	X^u(t)=\varphi(t),\ t\in[-\delta,0].
	\end{dcases}
\end{equation*}
The above can be seen as an SDVIE with the coefficients
\begin{equation*}
	b(t,s,x_1,x_2,u)=\frac{1}{\Gamma(\alpha)}(t-s)^{\alpha-1}b(s,x_1,x_2,u)\ \text{and}\ \sigma(t,s,x_1,x_2,u)=\frac{1}{\Gamma(\alpha)}(t-s)^{\alpha-1}\sigma(s,x_1,x_2,u).
\end{equation*}
The corresponding Lipschitz coefficients can be taken as
\begin{equation*}
	K_{f,\xi}(\tau)=\frac{L_{f,\xi}}{\Gamma(\alpha)}\tau^{\alpha-1},\ \tau\geq0,\ \xi=x_1,x_2,u,\ f=b,\sigma.
\end{equation*}
Note that the above coefficients are singular in the sense that their Lipschitz coefficients with respect to $x_1$, $x_2$ and $u$ diverge as $s\to t$. We consider the minimization problem for the discounted cost functional \eqref{1: cost}, subject to the controlled Caputo fractional SDDE \eqref{1: FSDDE_state}. We can apply the general results in \cref{sec: SDVIE}. In this case, simple calculations show that $\rho_{b,\sigma;x_1,x_2,u}=\rho_\alpha$, where
\begin{equation}\label{FSDDE: criterion}
	\rho_\alpha:=\inf\left\{\rho>0\relmiddle|(1+e^{-\rho\delta})\Bigl\{(L_{b,x_1}+L_{b,x_2})\rho^{-\alpha}+(L_{\sigma,x_1}+L_{\sigma,x_2})\frac{\sqrt{\Gamma(2\alpha-1)}}{\Gamma(\alpha)}(2\rho)^{-(\alpha-1/2)}\Bigr\}\leq1\right\}.
\end{equation}
Therefore, if $\mu>\rho_\alpha$ and $\lambda\geq2\mu$, then for each control process $u(\cdot)\in\cU_{-\mu}$, there exists a unique solution $X^u(\cdot)\in L^{2,-\mu}_\bF(0,\infty;\bR^n)$ to the controlled Caputo fractional SDDE \eqref{1: FSDDE_state}. Furthermore, the cost functional \eqref{1: cost} is well-defined. Fix an arbitrary control process $\hat{u}(\cdot)$, and denote the corresponding state process $\hat{X}(\cdot):=X^{\hat{u}}(\cdot)$. Applying the observation in \cref{sec: SDVIE}, we see that the corresponding adjoint equation \eqref{SDVIE: AE} becomes the following infinite horizon ABSVIE:
\begin{equation}\label{FSDDE: AE}
\begin{split}
	\hat{Y}(t)&=h_{x_1}(t)+e^{-\lambda\delta}h_{x_2}(t)\\
	&\hspace{1cm}+\frac{1}{\Gamma(\alpha)}\int^\infty_te^{-\lambda(s-t)}(s-t)^{\alpha-1}\Bigl\{b_{x_1}(t)^\top\hat{Y}(s)+e^{-\lambda\delta}\bE_s[b_{x_2}(t+\delta)^\top\hat{Y}(s+\delta)]\\
	&\hspace{2.5cm}+\sum^d_{k=1}\sigma^k_{x_1}(t)^\top\hat{Z}^k(s,t)+\sum^d_{k=1}e^{-\lambda\delta}\bE_s[\sigma^k_{x_2}(t+\delta)^\top\hat{Z}^k(s+\delta,t+\delta)]\Bigr\}\rd s\\
	&\hspace{1cm}-\int^\infty_t\hat{Z}(t,s)\rd W(s),\ t\geq0,
\end{split}
\end{equation}
The above equation admits a unique adapted M-solution $(\hat{Y}(\cdot),\hat{Z}(\cdot,\cdot))\in\cM^{2,-\mu}_\bF(0,\infty;\bR^n\times\bR^{n\times d})$. Furthermore, the optimality condition becomes
\begin{equation}\label{FSDDE: OC}
\begin{split}
	&\Bigl\langle h_u(t)+\frac{1}{\Gamma(\alpha)}b_u(t)^\top\int^\infty_te^{-\lambda(s-t)}(s-t)^{\alpha-1}\bE_t\bigl[\hat{Y}(s)\bigr]\rd s\\
	&\hspace{1cm}+\frac{1}{\Gamma(\alpha)}\sum^d_{k=1}\sigma^k_u(t)^\top\int^\infty_te^{-\lambda(s-t)}(s-t)^{\alpha-1}\hat{Z}^k(s,t)\rd s,u-\hat{u}(t)\Bigr\rangle\geq0,\ \forall\,u\in U.
\end{split}
\end{equation}
By \cref{SDVIE: theo_MP}, if $\hat{u}(\cdot)$ is an optimal control, then the optimality condition \eqref{FSDDE: OC} holds for a.e.\ $t\geq0$, a.s. Conversely, if the map
\begin{equation}\label{FSDDE: convex}
\begin{split}
	&(x_1,x_2,u)\mapsto h(t,x_1,x_2,u)+\frac{1}{\Gamma(\alpha)}\Bigl\langle b(t,x_1,x_2,u),\int^\infty_te^{-\lambda(s-t)}(s-t)^{\alpha-1}\bE_t\bigl[\hat{Y}(s)\bigr]\rd s\Bigr\rangle\\
	&\hspace{4cm}+\frac{1}{\Gamma(\alpha)}\sum^d_{k=1}\Bigl\langle\sigma^k(t,x_1,x_2,u),\int^\infty_te^{-\lambda(s-t)}(s-t)^{\alpha-1}\hat{Z}^k(s,t)\rd s\Bigr\rangle
\end{split}
\end{equation}
is convex for a.e.\ $t\geq0$, a.s., and if \eqref{FSDDE: OC} holds for a.e.\ $t\geq0$, a.s., then the control process $\hat{u}(\cdot)$ is optimal.


\subsection{Infinite horizon LQ regulator problems for fractional SDDEs}\label{subsec: FSDDE_LQ}

As an example, we consider an infinite horizon LQ regulator problem for a Caputo fractional SDDE. For simplicity of notation, in this subsection, we consider the one-dimensional case, that is, $d=n=\ell=1$. Suppose that we are given constants $x_0,b,\sigma\in\bR$, $c\in\bR\setminus\{0\}$, $\gamma>0$, $\alpha\in(\frac{1}{2},1]$, $\delta\geq0$, and $\lambda>0$, and assume that the control set $U$ is the whole real line. Consider the following controlled linear Caputo fractional SDDE:
\begin{equation}\label{LQ: state}
	\begin{dcases}
	\CD X^u(t)=bX^u(t-\delta)+cu(t)+\sigma\frac{\mathrm{d}W(t)}{\mathrm{d}t},\ t\geq0,\\
	X^u(t)=x_0,\ t\in[-\delta,0].
	\end{dcases}
\end{equation}
The cost functional is defined by
\begin{equation}\label{LQ: cost}
	J_\lambda(u(\cdot))=\frac{1}{2}\bE\Bigl[\int^\infty_0e^{-\lambda t}\Bigl\{|X^u(t)|^2+\frac{1}{\gamma}|u(t)|^2\Bigr\}\rd t\Bigr].
\end{equation}
We can apply the general results in Subsection~\ref{subsec: FSDDE_general}. As to the constant \eqref{FSDDE: criterion}, we have $\rho_\alpha=0$ when $b=0$. On the other hand, when $b\neq0$, $\rho_\alpha >0$ is the constant satisfying $|b|(1+e^{-\rho_\alpha\delta})\rho^{-\alpha}_\alpha=1$. Fix $\mu>\rho_\alpha$, and let $\lambda\geq2\mu$. Then for any control process $u(\cdot)\in\cU_{-\mu}=L^{2,-\mu}_\bF(0,\infty;\bR)$, there exists a unique solution $X^u(\cdot)\in L^{2,-\mu}_\bF(0,\infty;\bR)$ to the linear Caputo fractional SDDE \eqref{LQ: state}, and the cost functional \eqref{LQ: cost} is well-defined. Note that, by the definition, $X^u(\cdot)$ solves the following SDVIE:
\begin{equation}\label{LQ: state'}
	\begin{dcases}
	X^u(t)=x_0+\frac{1}{\Gamma(\alpha)}\int^t_0(t-s)^{\alpha-1}\bigl\{bX^u(s-\delta)+cu(s)\bigr\}\rd s+\frac{1}{\Gamma(\alpha)}\int^t_0(t-s)^{\alpha-1}\sigma\rd W(s),\ t\geq0,\\
	X^u(t)=x_0,\ t\in[-\delta,0].
	\end{dcases}
\end{equation}
The LQ regulator problem is to minimize the quadratic cost functional \eqref{LQ: cost} over all control processes $u(\cdot)\in\cU_{-\mu}$, subject to the linear Caputo fractional SDDE \eqref{LQ: state} (or equivalently the linear SDVIE \eqref{LQ: state'}).

We shall show that, when the parameter $\mu>0$ is sufficiently large, there exists a unique optimal control for the LQ regulator problem \eqref{LQ: state}--\eqref{LQ: cost}. Furthermore, we show that the optimal control is given by a \emph{Gaussian state-feedback representation formula}. The precise statement of the main result (\cref{LQ: theo_characterization}) will be given later, since we need some new notations which will appear in the following arguments.


\begin{rem}\label{LQ: rem_other}
\begin{itemize}
\item[(i)]
Our framework includes the cases without delay ($\delta=0$), the deterministic case ($\sigma=0$), and the classical derivative case ($\alpha=1$).
\item[(ii)]
We can easily extend the arguments below to the more general case where the state dynamics~\eqref{LQ: state} and the cost functional~\eqref{LQ: cost} are replaced by
\begin{equation*}
	\begin{dcases}
	\CD X^u(t)=b_1X^u(t)+b_2X^u(t-\delta)+cu(t)+\sigma\frac{\mathrm{d}W(t)}{\mathrm{d}t},\ t\geq0,\\
	X^u(t)=x_0,\ t\in[-\delta,0],
	\end{dcases}
\end{equation*}
and
\begin{equation*}
	J_\lambda(u(\cdot))=\bE\Bigl[\int^\infty_0e^{-\lambda t}\Bigl\{h_1|X^u(t)|^2+h_2|X^u(t-\delta)|^2+\frac{1}{\gamma}|u(t)|^2\Bigr\}\rd t\Bigr],
\end{equation*}
respectively, for some constants $b_1,b_2\in\bR$ and $h_1,h_2>0$. However, in the following, we consider \eqref{LQ: state} and \eqref{LQ: cost} for notational simplicity.
\end{itemize}
\end{rem}

First, we prove the uniqueness of the optimal control.


\begin{lemm}\label{LQ: lemm_unique}
Let $\mu>\rho_\alpha$ and $\lambda\geq2\mu$. Then the LQ regulator problem \eqref{LQ: state}--\eqref{LQ: cost} has at most one optimal control in $\cU_{-\mu}$.
\end{lemm}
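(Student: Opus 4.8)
The plan is to exploit the \emph{strict convexity} of the quadratic cost functional in the control variable together with the \emph{affine dependence} of the state on the control.

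First I would note that the controlled equation \eqref{LQ: state'} is affine in the pair $(X^u(\cdot),u(\cdot))$: the free term $x_0$ and the stochastic convolution $\frac{\sigma}{\Gamma(\alpha)}\int^t_0(t-s)^{\alpha-1}\rd W(s)$ do not involve the control, while the remaining terms depend linearly on $X^u(s-\delta)$ and $u(s)$. Since $\cU_{-\mu}=L^{2,-\mu}_\bF(0,\infty;\bR)$ is a linear space, for any two controls $u_1(\cdot),u_2(\cdot)\in\cU_{-\mu}$ the midpoint $\bar{u}(\cdot):=\tfrac12(u_1(\cdot)+u_2(\cdot))$ again lies in $\cU_{-\mu}$. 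By the linearity of \eqref{LQ: state'} and the uniqueness of the solution in $L^{2,-\mu}_\bF(0,\infty;\bR)$ (valid under the standing hypotheses $\mu>\rho_\alpha$ and $\lambda\geq2\mu$, recalled from \cite{Ha21+}), the corresponding state satisfies $X^{\bar{u}}(\cdot)=\tfrac12\bigl(X^{u_1}(\cdot)+X^{u_2}(\cdot)\bigr)$.

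Next I would apply the elementary identity $\bigl|\tfrac{a+b}{2}\bigr|^2=\tfrac12|a|^2+\tfrac12|b|^2-\tfrac14|a-b|^2$ to both the state term and the control term appearing in \eqref{LQ: cost}. This gives
\[
	J_\lambda(\bar{u}(\cdot))=\tfrac12 J_\lambda(u_1(\cdot))+\tfrac12 J_\lambda(u_2(\cdot))-\tfrac18\,\bE\Bigl[\int^\infty_0e^{-\lambda t}\Bigl\{|X^{u_1}(t)-X^{u_2}(t)|^2+\tfrac1\gamma|u_1(t)-u_2(t)|^2\Bigr\}\rd t\Bigr].
\]
Now, if $u_1(\cdot)$ and $u_2(\cdot)$ are both optimal, then $J_\lambda(u_1(\cdot))=J_\lambda(u_2(\cdot))=\inf_{u(\cdot)\in\cU_{-\mu}}J_\lambda(u(\cdot))=:m$, and since $\bar{u}(\cdot)\in\cU_{-\mu}$ we have $J_\lambda(\bar{u}(\cdot))\geq m$. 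Substituting into the identity above forces $\bE\bigl[\int^\infty_0e^{-\lambda t}\{|X^{u_1}(t)-X^{u_2}(t)|^2+\tfrac1\gamma|u_1(t)-u_2(t)|^2\}\rd t\bigr]\leq0$; as the integrand is nonnegative and $\gamma>0$, this yields $u_1(t)=u_2(t)$ for a.e.\ $t\geq0$, a.s., i.e.\ $u_1(\cdot)=u_2(\cdot)$ in $\cU_{-\mu}$.

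I do not expect a genuine obstacle in this argument; the only point requiring a little care is the justification of the affine relation $X^{\bar{u}}(\cdot)=\tfrac12(X^{u_1}(\cdot)+X^{u_2}(\cdot))$, which rests on the well-posedness of the linear SVIE \eqref{LQ: state'} in the weighted space $L^{2,-\mu}_\bF(0,\infty;\bR)$, together with the (automatic) finiteness of $J_\lambda$ on $\cU_{-\mu}$ guaranteed by $\mu>\rho_\alpha$ and $\lambda\geq2\mu$. Note that this proof does not use the maximum principle; existence of an optimal control (hence that ``at most one'' becomes ``exactly one'') is the harder matter deferred to the subsequent characterization result.
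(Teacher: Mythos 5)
Your proof is correct and follows essentially the same route as the paper: both arguments derive strict convexity of $J_\lambda$ on $\cU_{-\mu}$ from the affine dependence of $X^u(\cdot)$ on $u(\cdot)$ (justified by uniqueness of the solution to the linear state equation) together with the quadratic structure of the cost, the only cosmetic difference being that you instantiate convexity at the midpoint via the parallelogram identity while the paper states it for a general convex combination.
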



\begin{proof}
We show that the cost functional $u(\cdot)\mapsto J_\lambda(u(\cdot))$ is strictly convex on $\cU_{-\mu}$. Let $u_0(\cdot),u_1(\cdot)\in\cU_{-\mu}$ be two control processes such that $u_0(t)\neq u_1(t)$ with positive measure with respect to $\mathrm{Leb}\otimes\bP$, where $\mathrm{Leb}$ denotes the Lebesgue measure on $[0,\infty)$. Denote the corresponding state processes by $X_0(\cdot):=X^{u_0}(\cdot)$ and $X_1(\cdot):=X^{u_1}(\cdot)$, respectively. For each $\theta\in(0,1)$, define $u_\theta(\cdot):=(1-\theta)u_0(\cdot)+\theta u_1(\cdot)\in\cU_{-\mu}$ and $X_\theta(\cdot):=X^{u_\theta}(\cdot)$. By the uniqueness of the solution of the controlled linear Caputo fractional SDDE \eqref{LQ: state}, we see that $X_\theta(\cdot)=(1-\theta)X_0(\cdot)+\theta X_1(\cdot)$. Hence, by the quadratic structure of the cost functional, we can easily show that $J_\lambda(u_\theta(\cdot))<(1-\theta)J_\lambda(u_0(\cdot))+\theta J_\lambda(u_1(\cdot))$. This implies that $u(\cdot)\mapsto J_\lambda(u(\cdot))$ is strictly convex on $\cU_{-\mu}$, and thus the optimal control is, if it exists, unique.
\end{proof}

Next, we give a characterization of the optimal control in several steps. The following lemma is the first step, which is based on the necessary and sufficient maximum principles.


\begin{lemm}\label{LQ: lemm_OC1}
Let $\mu>\rho_\alpha$ and $\lambda\geq2\mu$. A control process $\hat{u}(\cdot)\in\cU_{-\mu}$ is optimal if and only if the following holds:
\begin{equation}\label{LQ: OC1}
	\hat{u}(t)+\frac{c\gamma}{\Gamma(\alpha)}\int^\infty_te^{-\lambda(s-t)}(s-t)^{\alpha-1}\bE_t[\hat{X}(s)]\rd s-\frac{be^{-\lambda\delta}}{\Gamma(\alpha)}\int^\infty_te^{-\lambda(s-t)}(s-t)^{\alpha-1}\bE_t[\hat{u}(s+\delta)]\rd s=0
\end{equation}
for a.e.\ $t\geq0$, a.s., where $\hat{X}(\cdot):=X^{\hat{u}}(\cdot)$ denotes the state process corresponding to $\hat{u}(\cdot)$.
\end{lemm}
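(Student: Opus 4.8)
The plan is to apply the necessary and sufficient maximum principles of Subsection~\ref{subsec: FSDDE_general} and then eliminate the adjoint process from the optimality condition. For the present data ($n=d=\ell=1$, with $b(s,x_1,x_2,u)=bx_2+cu$, $\sigma$ constant, and $h(t,x_1,x_2,u)=\tfrac12(|x_1|^2+\tfrac1\gamma|u|^2)$) one has $b_{x_1}=\sigma_{x_1}=\sigma_{x_2}=\sigma_u\equiv0$, $b_{x_2}\equiv b$, $b_u\equiv c$, $h_{x_1}(t)=\hat X(t)$, $h_{x_2}\equiv0$, and $h_u(t)=\tfrac1\gamma\hat u(t)$, so the adjoint ABSVIE~\eqref{FSDDE: AE} collapses to
\[
	\hat Y(t)=\hat X(t)+\frac{be^{-\lambda\delta}}{\Gamma(\alpha)}\int_t^\infty e^{-\lambda(s-t)}(s-t)^{\alpha-1}\bE_s\bigl[\hat Y(s+\delta)\bigr]\rd s-\int_t^\infty\hat Z(t,s)\rd W(s),
\]
and, since $U=\bR$, the optimality condition~\eqref{FSDDE: OC} becomes the equality
\[
	\hat u(t)+\frac{c\gamma}{\Gamma(\alpha)}\int_t^\infty e^{-\lambda(s-t)}(s-t)^{\alpha-1}\bE_t\bigl[\hat Y(s)\bigr]\rd s=0,
\]
which, writing $P(t):=\tfrac1{\Gamma(\alpha)}\int_t^\infty e^{-\lambda(s-t)}(s-t)^{\alpha-1}\bE_t[\hat Y(s)]\rd s$, reads $\hat u(t)=-c\gamma P(t)$; call this $(\star)$. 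Since $h$ is a convex quadratic ($\gamma>0$) and $b,\sigma$ are affine in $(x_1,x_2,u)$, the map~\eqref{FSDDE: convex} is automatically convex, so by \cref{SDVIE: theo_MP} (as specialized in Subsection~\ref{subsec: FSDDE_general}) the control $\hat u(\cdot)$ is optimal if and only if $(\star)$ holds for a.e.\ $t\ge0$, a.s. It therefore remains to prove $(\star)\Longleftrightarrow\eqref{LQ: OC1}$.

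For the key identity I would take $\bE_t[\cdot]$ in the simplified adjoint equation written at time $s\ge t$, apply the change of variable $v=r+\delta$ (so $r-s=v-(s+\delta)$) and the tower property, and obtain the always-valid relation $\bE_t[\hat Y(s)]=\bE_t[\hat X(s)]+be^{-\lambda\delta}\bE_t[P(s+\delta)]$ for $s\ge t$. Integrating this against $\tfrac1{\Gamma(\alpha)}e^{-\lambda(s-t)}(s-t)^{\alpha-1}$ over $s\in[t,\infty)$ gives
\[
	P(t)=Q(t)+\frac{be^{-\lambda\delta}}{\Gamma(\alpha)}\int_t^\infty e^{-\lambda(s-t)}(s-t)^{\alpha-1}\bE_t\bigl[P(s+\delta)\bigr]\rd s,
\]
where $Q(t)$ (resp.\ $R(t)$) denotes the same weighted integral applied to $\bE_t[\hat X(\cdot)]$ (resp.\ $\bE_t[\hat u(\cdot+\delta)]$). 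If $(\star)$ holds, then $P(\cdot)=-\tfrac1{c\gamma}\hat u(\cdot)$; substituting this into the last display and multiplying by $-c\gamma$ turns it into $\hat u(t)+c\gamma Q(t)-be^{-\lambda\delta}R(t)=0$, which is exactly \eqref{LQ: OC1}.

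For the converse, I would assume \eqref{LQ: OC1} and set $D(t):=\hat u(t)+c\gamma P(t)$. Using \eqref{LQ: OC1} together with the displayed identity for $P$, a short computation gives
\[
	D(t)=\frac{be^{-\lambda\delta}}{\Gamma(\alpha)}\int_t^\infty e^{-\lambda(s-t)}(s-t)^{\alpha-1}\bE_t\bigl[D(s+\delta)\bigr]\rd s.
\]
Since $D\in L^{2,-\mu}_\bF(0,\infty;\bR)$ (because $\hat u(\cdot)\in\cU_{-\mu}$ and $P(\cdot)$ lies in the same space by the estimates of \cite{Ha21+}) and, under $\mu>\rho_\alpha$ and $\lambda\ge2\mu$, the anticipating Volterra operator on the right has operator norm $<1$ on $L^{2,-\mu}_\bF(0,\infty;\bR)$ — estimated as in the well-posedness arguments of \cite{Ha21+}, using $e^{-\lambda\delta}e^{\mu\delta}\le e^{-\mu\delta}$, $(\lambda-\mu)^{-\alpha}\le\mu^{-\alpha}$, and $|b|(1+e^{-\mu\delta})\mu^{-\alpha}<1$ — it follows that $D\equiv0$, i.e.\ $(\star)$ holds, whence $\hat u(\cdot)$ is optimal by the sufficient maximum principle. (Alternatively, one may verify that the explicit candidate $\tilde Y(t):=\hat X(t)-\tfrac{be^{-\lambda\delta}}{c\gamma}\bE_t[\hat u(t+\delta)]$ solves the simplified adjoint ABSVIE and invoke uniqueness of its adapted M-solution, reaching the same conclusion.)

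The hard part will be this converse: the forward implication is a plain substitution, but \eqref{LQ: OC1} no longer contains $\hat Y$, so one must reconstruct the adjoint process, and the argument rests on the contraction estimate for the anticipating Volterra operator $D(\cdot)\mapsto\tfrac{be^{-\lambda\delta}}{\Gamma(\alpha)}\int_\cdot^\infty e^{-\lambda(s-\cdot)}(s-\cdot)^{\alpha-1}\bE_\cdot[D(s+\delta)]\rd s$ — exactly where the standing bounds $\mu>\rho_\alpha$ and $\lambda\ge2\mu$ are needed. The remaining steps (the conditional-expectation manipulations, the shift $v=r+\delta$, and the interchange of $\bE_t$ with the Lebesgue integral) are routine.
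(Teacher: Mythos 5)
Your proposal is correct, and its skeleton coincides with the paper's: specialize the maximum principle of Subsection~\ref{subsec: FSDDE_general} to get the adjoint equation \eqref{LQ: AE} and the equality form \eqref{LQ: OC0} of the optimality condition (your $(\star)$), note convexity is automatic, and then prove $(\star)\Leftrightarrow\eqref{LQ: OC1}$. The forward implication is the same substitution in both versions. Where you diverge is the converse: the paper defines the explicit candidate $Y(t):=\hat X(t)-\frac{be^{-\lambda\delta}}{c\gamma}\bE_t[\hat u(t+\delta)]$, checks that \eqref{LQ: OC1} forces $Y$ to satisfy the conditional-expectation form \eqref{LQ: AE} of the adjoint equation, and concludes $Y=\hat Y$ by uniqueness of the solution to \eqref{LQ: AE} in $L^{2,-\mu}_\bF(0,\infty;\bR)$ — exactly your parenthetical alternative. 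Your primary route instead isolates the defect $D=\hat u+c\gamma P$, shows it solves the homogeneous anticipating Volterra equation, and kills it by the operator-norm bound $|b|e^{-(\lambda-\mu)\delta}(\lambda-\mu)^{-\alpha}\leq|b|e^{-\mu\delta}\mu^{-\alpha}<1$, which indeed follows from $\mu>\rho_\alpha$ and $\lambda\geq2\mu$. The two devices are equivalent in substance — the uniqueness the paper invokes is itself underwritten by the same contraction — but yours is self-contained at the cost of redoing the estimate, while the paper's is shorter because it delegates the contraction to the already-established well-posedness of the adjoint ABSVIE. Both are valid.
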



\begin{proof}
For each $\hat{u}(\cdot)\in\cU_{-\mu}$, the corresponding adjoint equation \eqref{FSDDE: AE} becomes
\begin{equation*}
	\hat{Y}(t)=\hat{X}(t)+\frac{be^{-\lambda\delta}}{\Gamma(\alpha)}\int^\infty_te^{-\lambda(s-t)}(s-t)^{\alpha-1}\bE_s[\hat{Y}(s+\delta)]\rd s-\int^\infty_t\hat{Z}(t,s)\rd W(s),\ t\geq0,
\end{equation*}
which admits a unique adapted M-solution $(\hat{Y}(\cdot),\hat{Z}(\cdot,\cdot))\in\cM^{2,-\mu}_\bF(0,\infty;\bR\times\bR)$. By the change of variable formula and the tower property of conditional expectations, the above equation can be rewritten as
\begin{equation}\label{LQ: AE}
	\hat{Y}(t)=\hat{X}(t)+be^{-\lambda\delta}\bE_t\Bigl[\frac{1}{\Gamma(\alpha)}\int^\infty_{t+\delta}e^{-\lambda(s-t-\delta)}(s-t-\delta)^{\alpha-1}\bE_{t+\delta}[\hat{Y}(s)]\rd s\Bigr],\ t\geq0.
\end{equation}
Also, noting that $U=\bR$, the optimality condition \eqref{FSDDE: OC} becomes
\begin{equation}\label{LQ: OC0}
	\frac{1}{\gamma}\hat{u}(t)+\frac{c}{\Gamma(\alpha)}\int^\infty_te^{-\lambda(s-t)}(s-t)^{\alpha-1}\bE_t[\hat{Y}(s)]\rd s=0.
\end{equation}
Clearly, the map \eqref{FSDDE: convex} is convex. Thus, by the necessary and sufficient maximum principles (\cref{SDVIE: theo_MP}), $\hat{u}(\cdot)$ is optimal if and only if \eqref{LQ: OC0} holds for a.e.\ $t\geq0$, a.s. Now we show that \eqref{LQ: OC0} is equivalent to \eqref{LQ: OC1}.

First, assume that \eqref{LQ: OC0} holds for a.e.\ $t\geq0$, a.s. Then by inserting \eqref{LQ: OC0} with $t$ replaced by $t+\delta$ into the adjoint equation \eqref{LQ: AE}, we have
\begin{equation*}
	\hat{Y}(t)=\hat{X}(t)-\frac{be^{-\lambda\delta}}{c\gamma}\bE_t[\hat{u}(t+\delta)],\ t\geq0.
\end{equation*}
Successively, by inserting this formula into \eqref{LQ: OC0}, we get \eqref{LQ: OC1}.

Conversely, assume that \eqref{LQ: OC1} holds for a.e.\ $t\geq0$, a.s. Define
\begin{equation*}
	Y(t):=\hat{X}(t)-\frac{be^{-\lambda\delta}}{c\gamma}\bE_t[\hat{u}(t+\delta)],\ t\geq0.
\end{equation*}
Then $Y(\cdot)\in L^{2,-\mu}_\bF(0,\infty;\bR)$, and \eqref{LQ: OC1} is written as
\begin{equation*}
	\hat{u}(t)+\frac{c\gamma}{\Gamma(\alpha)}\int^\infty_te^{-\lambda(s-t)}(s-t)^{\alpha-1}\bE_t[Y(s)]\rd s=0.
\end{equation*}
By inserting this formula with $t$ replaced by $t+\delta$ into the definition of $Y(\cdot)$, we see that
\begin{equation*}
	Y(t)=\hat{X}(t)+be^{-\lambda\delta}\bE_t\Bigl[\frac{1}{\Gamma(\alpha)}\int^\infty_{t+\delta}e^{-\lambda(s-t-\delta)}(s-t-\delta)^{\alpha-1}\bE_{t+\delta}[Y(s)]\rd s\Bigr],\ t\geq0.
\end{equation*}
Therefore, by the uniqueness of the solution to the equation \eqref{LQ: AE} in $L^{2,-\mu}_\bF(0,\infty;\bR)$, we have $Y(t)=\hat{Y}(t)$ for a.e.\ $t\geq0$, a.s., and thus \eqref{LQ: OC0} holds. This completes the proof.
\end{proof}

Now we transform the state-control pair. For each control process $u(\cdot)$, define $\cT[u](\cdot)\in\cU_{-\mu}$ by $\cT[u](t):=\frac{b}{c}X^u(t-\delta)+u(t)$ for $t\geq0$. Noting the uniqueness of the solution to the controlled fractional SDDE~\eqref{LQ: state}, it can be easily shown that the map $\cT$ is bijective on $\cU_{-\mu}$, and the inverse map $\cT^{-1}$ is given by $\cT^{-1}[v](t)=-\frac{b}{c}\cX^v(t-\delta)+v(t)$, $t\geq0$, for each $v(\cdot)\in\cU_{-\mu}$, where
\begin{equation*}
	\begin{dcases}
	\cX^v(t):=x_0+\frac{c}{\Gamma(\alpha)}\int^t_0(t-s)^{\alpha-1}v(s)\rd s+\frac{\sigma}{\Gamma(\alpha)}\int^t_0(t-s)^{\alpha-1}\rd W(s),\ t\geq0,\\
	\cX^v(t):=x_0,\ t\in[-\delta,0].
	\end{dcases}
\end{equation*}
Furthermore, it holds that $\cX^v(\cdot)=X^{\cT^{-1}[v]}(\cdot)$ for each $v(\cdot)\in\cU_{-\mu}$. The following lemma is the second step to characterize the optimal control.


\begin{lemm}\label{LQ: lemm_OC2}
Let $\mu>\rho_\alpha$ and $\lambda\geq2\mu$. A control process $\hat{u}(\cdot)\in\cU_{-\mu}$ is optimal if and only if the control process $\hat{v}(\cdot):=\cT[\hat{u}](\cdot)\in\cU_{-\mu}$ satisfies the following \emph{stochastic Fredholm integral equation}:
\begin{equation}\label{LQ: SFIE}
	\hat{v}(t)+\int^t_0g_\lambda(t-s)\hat{v}(s)\rd s+\int^\infty_te^{-\lambda(s-t)}g_\lambda(s-t)\bE_t[\hat{v}(s)]\rd s+\frac{\sigma}{c}\int^t_0g_\lambda(t-s)\rd W(s)+K_\lambda x_0=0,\ t\geq0,
\end{equation}
where $K_\lambda\in\bR$ and $g_\lambda:[0,\infty)\to\bR$ are defined by
\begin{equation}\label{LQ: K_lambda}
	K_\lambda:=\frac{\bigl(c^2\gamma+b^2e^{-\lambda\delta}\bigr)\lambda^{-\alpha}-b}{c}
\end{equation}
and
\begin{equation}\label{LQ: g_lambda}
	g_\lambda(\tau):=\frac{c^2\gamma+b^2e^{-\lambda\delta}}{\Gamma(\alpha)^2}\int^\infty_0e^{-\lambda\theta}\theta^{\alpha-1}(\theta+\tau)^{\alpha-1}\rd\theta-\frac{b}{\Gamma(\alpha)}(\tau-\delta)^{\alpha-1}_+,\ \tau\geq0,
\end{equation}
respectively. Here, we define $x^{\alpha-1}_+:=x^{\alpha-1}\1_{(0,\infty)}(x)$ for any $x\in\bR$.
\end{lemm}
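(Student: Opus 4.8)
The plan is to start from \cref{LQ: lemm_OC1}, which already reduces optimality of $\hat{u}(\cdot)\in\cU_{-\mu}$ to the identity \eqref{LQ: OC1}, and to show that \eqref{LQ: OC1} is, after substituting $\hat{u}\mapsto\hat{v}=\cT[\hat{u}]$, precisely the stochastic Fredholm equation \eqref{LQ: SFIE}. From the definition of $\cT$ we have $\hat{u}(t)=\hat{v}(t)-\frac{b}{c}\hat{X}(t-\delta)$ and $\hat{u}(s+\delta)=\hat{v}(s+\delta)-\frac{b}{c}\hat{X}(s)$, and, by the uniqueness of the solution to \eqref{LQ: state'} together with the identity $\cX^{v}(\cdot)=X^{\cT^{-1}[v]}(\cdot)$ recorded above, $\hat{X}(\cdot)=\cX^{\hat{v}}(\cdot)$, that is,
\[
	\hat{X}(s)=x_0+\frac{c}{\Gamma(\alpha)}\int^s_0(s-r)^{\alpha-1}\hat{v}(r)\rd r+\frac{\sigma}{\Gamma(\alpha)}\int^s_0(s-r)^{\alpha-1}\rd W(r),\quad s\geq0.
\]
I would insert these three relations into \eqref{LQ: OC1}. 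The $\hat{u}(t)$ term yields $\hat{v}(t)$ plus the ``past'' term $-\frac{b}{c}\hat{X}(t-\delta)$, while the $\hat{X}(s)$-integral and the $-\frac{b}{c}\hat{X}(s)$-part of the $\hat{u}(s+\delta)$-integral combine so that $\bE_t[\hat{X}(s)]$ appears with the single coefficient $\frac{c\gamma}{\Gamma(\alpha)}+\frac{b^2e^{-\lambda\delta}}{c\Gamma(\alpha)}=\frac{c^2\gamma+b^2e^{-\lambda\delta}}{c\Gamma(\alpha)}$.

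Next I would carry out the (stochastic) Fubini interchanges, the only nontrivial ingredient being the kernel identities obtained via $\theta=s-t$ and $\theta=s-r$,
\[
	\frac{1}{\Gamma(\alpha)}\int^\infty_te^{-\lambda(s-t)}(s-t)^{\alpha-1}(s-r)^{\alpha-1}\rd s=\frac{1}{\Gamma(\alpha)}\int^\infty_0e^{-\lambda\theta}\theta^{\alpha-1}\bigl(\theta+(t-r)\bigr)^{\alpha-1}\rd\theta\quad(r\leq t),
\]
\[
	\frac{1}{\Gamma(\alpha)}\int^\infty_re^{-\lambda(s-t)}(s-t)^{\alpha-1}(s-r)^{\alpha-1}\rd s=\frac{e^{-\lambda(r-t)}}{\Gamma(\alpha)}\int^\infty_0e^{-\lambda\theta}\theta^{\alpha-1}\bigl(\theta+(r-t)\bigr)^{\alpha-1}\rd\theta\quad(r\geq t),
\]
together with $\frac{1}{\Gamma(\alpha)}\int^\infty_te^{-\lambda(s-t)}(s-t)^{\alpha-1}\rd s=\lambda^{-\alpha}$. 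Using these, the $\hat{v}$-part of $\hat{X}(s)=\cX^{\hat{v}}(s)$ turns, after interchanging $\rd s$ with $\rd r$, into the kernel $\frac{c^2\gamma+b^2e^{-\lambda\delta}}{\Gamma(\alpha)^2}\int^\infty_0e^{-\lambda\theta}\theta^{\alpha-1}(\theta+\cdot)^{\alpha-1}\rd\theta$ convolved against $\hat{v}$ over $[0,t]$ (from the $r\leq t$ part of $\bE_t[\hat{X}(s)]$) and against $e^{-\lambda(\cdot)}\bE_t[\hat{v}(\cdot)]$ over $[t,\infty)$ (from the $r>t$ part), the $W$-part into the same kernel convolved against $\rd W$ over $[0,t]$ with prefactor $\frac{\sigma}{c}$, and the $x_0$-part into $\frac{c^2\gamma+b^2e^{-\lambda\delta}}{c}\lambda^{-\alpha}x_0$. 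Using $(t-\delta-r)^{\alpha-1}\1_{\{r\leq t-\delta\}}=\bigl((t-r)-\delta\bigr)^{\alpha-1}_+$, the term $-\frac{b}{c}\hat{X}(t-\delta)$ contributes $-\frac{b}{\Gamma(\alpha)}(\tau-\delta)^{\alpha-1}_+$ (with $\tau=t-r$) to the $[0,t]$ Lebesgue convolution and, times $\frac{\sigma}{c}$, to the stochastic convolution, plus $-\frac{b}{c}x_0$ to the constant; and, after the change of variables $r=s+\delta$, the term $-\frac{be^{-\lambda\delta}}{\Gamma(\alpha)}\int^\infty_te^{-\lambda(s-t)}(s-t)^{\alpha-1}\bE_t[\hat{v}(s+\delta)]\rd s$ contributes $-\frac{b}{\Gamma(\alpha)}\bigl((r-t)-\delta\bigr)^{\alpha-1}_+$ to the anticipated integral over $[t,\infty)$. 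Assembling the two kernel pieces gives $g_\lambda$ as in \eqref{LQ: g_lambda}, and the constants collect to $\frac{(c^2\gamma+b^2e^{-\lambda\delta})\lambda^{-\alpha}-b}{c}x_0=K_\lambda x_0$ as in \eqref{LQ: K_lambda}; hence \eqref{LQ: OC1} is equivalent to \eqref{LQ: SFIE}, and combining with \cref{LQ: lemm_OC1} proves the lemma.

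Finally I would record the routine justifications. Since $\alpha>\frac{1}{2}$, the power kernel $\tau\mapsto\tau^{\alpha-1}$ is locally integrable and locally square-integrable, and $\int^\infty_0e^{-\lambda\theta}\theta^{\alpha-1}(\theta+\tau)^{\alpha-1}\rd\theta$ is finite and locally bounded for $\tau\geq0$ (bound $(\theta+\tau)^{\alpha-1}\leq\theta^{\alpha-1}$ near $\tau=0$ and use exponential decay as $\theta\to\infty$), so $g_\lambda$ lies in a suitable weighted $L^1\cap L^2$ space and every integral in \eqref{LQ: SFIE} is well-defined for $\hat{v}(\cdot)\in\cU_{-\mu}$; the (stochastic) Fubini theorem applies with these bounds; and $\bE_t[\cdot]$ commutes with the $\rd s$-integrals and sends $\int^s_0(s-r)^{\alpha-1}\rd W(r)$ to $\int^{s\wedge t}_0(s-r)^{\alpha-1}\rd W(r)$ and $\int^s_0(s-r)^{\alpha-1}\hat{v}(r)\rd r$ to $\int^t_0(s-r)^{\alpha-1}\hat{v}(r)\rd r+\int^s_t(s-r)^{\alpha-1}\bE_t[\hat{v}(r)]\rd r$ for $s\geq t$. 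Since each step is reversible and $\hat{v}=\cT[\hat{u}]\in\cU_{-\mu}$ (as $\cT$ is a bijection on $\cU_{-\mu}$), the equivalence follows. I expect the main difficulty to be purely the bookkeeping: correctly distributing each of the three summands of \eqref{LQ: OC1}, after the substitution, over the three structurally different terms of \eqref{LQ: SFIE}, while keeping the powers of $\Gamma(\alpha)$ and the factor $c^2\gamma+b^2e^{-\lambda\delta}$ straight and justifying the Fubini steps with the singular kernels.
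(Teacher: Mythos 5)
Your proposal is correct and follows essentially the same route as the paper: substitute $\hat u=\cT^{-1}[\hat v]$ and $\hat X=\cX^{\hat v}$ into the optimality condition \eqref{LQ: OC1} from \cref{LQ: lemm_OC1}, combine the two $\bE_t[\hat X(s)]$ contributions into the coefficient $\tfrac{c^2\gamma+b^2e^{-\lambda\delta}}{c\Gamma(\alpha)}$, and use stochastic Fubini together with the two kernel identities (the paper's $f_\lambda$) and the shift $(t-\delta-r)^{\alpha-1}_+$ bookkeeping to assemble $g_\lambda$ and $K_\lambda$. All coefficients check out, so this is the paper's argument with the routine integrability justifications made slightly more explicit.
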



\begin{proof}
Let $\hat{u}(\cdot)\in\cU_{-\mu}$ be fixed, and define $\hat{v}(\cdot):=\cT[\hat{u}](\cdot)\in\cU_{-\mu}$. By the definition, it holds that $\hat{u}(t)=\cT^{-1}[\hat{v}](t)=-\frac{b}{c}\hat{\cX}(t-\delta)+\hat{v}(t)$, $t\geq0$, where $\hat{\cX}(\cdot):=\cX^{\hat{v}}(\cdot)\in L^{2,-\mu}_\bF(0,\infty;\bR)$. Furthermore, we have $\hat{X}(\cdot):=X^{\hat{u}}(\cdot)=\hat{\cX}(\cdot)$. Therefore, the left-hand side of the optimality condition \eqref{LQ: OC1} is rewritten as follows:
\begin{equation}\label{LQ: OC1_LHS}
\begin{split}
	&\hat{u}(t)+\frac{c\gamma}{\Gamma(\alpha)}\int^\infty_te^{-\lambda(s-t)}(s-t)^{\alpha-1}\bE_t[\hat{X}(s)]\rd s-\frac{be^{-\lambda\delta}}{\Gamma(\alpha)}\int^\infty_te^{-\lambda(s-t)}(s-t)^{\alpha-1}\bE_t[\hat{u}(s+\delta)]\rd s\\
	&=-\frac{b}{c}\hat{\cX}(t-\delta)+\hat{v}(t)+\frac{c^2\gamma+b^2e^{-\lambda\delta}}{c\Gamma(\alpha)}\int^\infty_te^{-\lambda(s-t)}(s-t)^{\alpha-1}\bE_t[\hat{\cX}(s)]\rd s\\
	&\hspace{1cm}-\frac{be^{-\lambda\delta}}{\Gamma(\alpha)}\int^\infty_te^{-\lambda(s-t)}(s-t)^{\alpha-1}\bE_t[\hat{v}(s+\delta)]\rd s.
\end{split}
\end{equation}
Observe that
\begin{equation*}
	\hat{\cX}(t-\delta)=x_0+\frac{c}{\Gamma(\alpha)}\int^t_0(t-s-\delta)^{\alpha-1}_+\hat{v}(s)\rd s+\frac{\sigma}{\Gamma(\alpha)}\int^t_0(t-s-\delta)^{\alpha-1}_+\rd W(s),\ t\geq0,
\end{equation*}
where $x^{\alpha-1}_+:=x^{\alpha-1}\1_{(0,\infty)}(x)$ for any $x\in\bR$. Also, by the change of variable formula, we have
\begin{align*}
	\int^\infty_te^{-\lambda(s-t)}(s-t)^{\alpha-1}\bE_t[\hat{v}(s+\delta)]\rd s&=\int^\infty_{t+\delta}e^{-\lambda(s-t-\delta)}(s-t-\delta)^{\alpha-1}\bE_t[\hat{v}(s)]\rd s\\
	&=e^{\lambda\delta}\int^\infty_te^{-\lambda(s-t)}(s-t-\delta)^{\alpha-1}_+\bE_t[\hat{v}(s)]\rd s,\ t\geq0.
\end{align*}
By using (stochastic) Fubini's theorem, the integral term containing the conditional expectation of $\hat{\cX}(\cdot)$ can be calculated as follows:
\begin{align*}
	&\int^\infty_te^{-\lambda(s-t)}(s-t)^{\alpha-1}\bE_t[\hat{\cX}(s)]\rd s\\
	&=\int^\infty_te^{-\lambda(s-t)}(s-t)^{\alpha-1}\Bigl\{x_0+\frac{c}{\Gamma(\alpha)}\int^t_0(s-\theta)^{\alpha-1}\hat{v}(\theta)\rd\theta+\frac{c}{\Gamma(\alpha)}\int^s_t(s-\theta)^{\alpha-1}\bE_t[\hat{v}(\theta)]\rd\theta\\
	&\hspace{5cm}+\frac{\sigma}{\Gamma(\alpha)}\int^t_0(s-\theta)^{\alpha-1}\rd W(\theta)\Bigr\}\rd s\\
	&=\lambda^{-\alpha}\Gamma(\alpha)x_0+\frac{c}{\Gamma(\alpha)}\int^t_0\int^\infty_te^{-\lambda(\theta-t)}(\theta-t)^{\alpha-1}(\theta-s)^{\alpha-1}\rd\theta\,\hat{v}(s)\rd s\\
	&\hspace{2cm}+\frac{c}{\Gamma(\alpha)}\int^\infty_t\int^\infty_se^{-\lambda(\theta-t)}(\theta-t)^{\alpha-1}(\theta-s)^{\alpha-1}\rd\theta\,\bE_t[\hat{v}(s)]\rd s\\
	&\hspace{2cm}+\frac{\sigma}{\Gamma(\alpha)}\int^t_0\int^\infty_te^{-\lambda(\theta-t)}(\theta-t)^{\alpha-1}(\theta-s)^{\alpha-1}\rd\theta\rd W(s).
\end{align*}
Note that, for $(t,s)\in\Delta^\comp[0,\infty)$,
\begin{equation*}
	\frac{1}{\Gamma(\alpha)}\int^\infty_te^{-\lambda(\theta-t)}(\theta-t)^{\alpha-1}(\theta-s)^{\alpha-1}\rd\theta=\frac{1}{\Gamma(\alpha)}\int^\infty_0e^{-\lambda\theta}\theta^{\alpha-1}(\theta+t-s)^{\alpha-1}\rd\theta=f_\lambda(t-s),
\end{equation*}
where
\begin{equation*}
	f_\lambda(\tau):=\frac{1}{\Gamma(\alpha)}\int^\infty_0e^{-\lambda\theta}\theta^{\alpha-1}(\theta+\tau)^{\alpha-1}\rd\theta,\ \tau\geq0.
\end{equation*}
Also, for $(t,s)\in\Delta[0,\infty)$,
\begin{equation*}
	\frac{1}{\Gamma(\alpha)}\int^\infty_se^{-\lambda(\theta-t)}(\theta-t)^{\alpha-1}(\theta-s)^{\alpha-1}\rd\theta=\frac{e^{-\lambda(s-t)}}{\Gamma(\alpha)}\int^\infty_0e^{-\lambda\theta}\theta^{\alpha-1}(\theta+s-t)^{\alpha-1}\rd\theta=e^{-\lambda(s-t)}f_\lambda(s-t).
\end{equation*}
Therefore, we obtain
\begin{align*}
	&\int^\infty_te^{-\lambda(s-t)}(s-t)^{\alpha-1}\bE_t[\hat{\cX}(s)]\rd s\\
	&=\lambda^{-\alpha}\Gamma(\alpha)x_0+c\int^t_0f_\lambda(t-s)\hat{v}(s)\rd s+c\int^\infty_te^{-\lambda(s-t)}f_\lambda(s-t)\bE_t[\hat{v}(s)]\rd s+\sigma\int^t_0f_\lambda(t-s)\rd W(s),\ t\geq0.
\end{align*}
The above calculations yield that the right-hand side of the equality \eqref{LQ: OC1_LHS} is equal to
\begin{align*}
	&-\frac{b}{c}\Bigl\{x_0+\frac{c}{\Gamma(\alpha)}\int^t_0(t-s-\delta)^{\alpha-1}_+\hat{v}(s)\rd s+\frac{\sigma}{\Gamma(\alpha)}\int^t_0(t-s-\delta)^{\alpha-1}_+\rd W(s)\Bigr\}+\hat{v}(t)\\
	&+\frac{c^2\gamma+b^2e^{-\lambda\delta}}{c\Gamma(\alpha)}\Big\{\lambda^{-\alpha}\Gamma(\alpha)x_0+c\int^t_0f_\lambda(t-s)\hat{v}(s)\rd s+c\int^\infty_te^{-\lambda(s-t)}f_\lambda(s-t)\bE_t[\hat{v}(s)]\rd s\\
	&\hspace{3cm}+\sigma\int^t_0f_\lambda(t-s)\rd W(s)\Bigr\}\\
	&-\frac{be^{-\lambda\delta}}{\Gamma(\alpha)}e^{\lambda\delta}\int^\infty_te^{-\lambda(s-t)}(s-t-\delta)^{\alpha-1}_+\bE_t[\hat{v}(s)]\rd s\\
	&=\hat{v}(t)+\frac{1}{\Gamma(\alpha)}\int^t_0\bigl\{\bigl(c^2\gamma+b^2e^{-\lambda\delta}\bigr)f_\lambda(t-s)-b(t-s-\delta)^{\alpha-1}_+\bigr\}\hat{v}(s)\rd s\\
	&\hspace{1cm}+\frac{1}{\Gamma(\alpha)}\int^\infty_te^{-\lambda(s-t)}\bigl\{\bigl(c^2\gamma+b^2e^{-\lambda\delta}\bigr)f_\lambda(s-t)-b(s-t-\delta)^{\alpha-1}_+\bigr\}\bE_t[\hat{v}(s)]\rd s\\
	&\hspace{1cm}+\frac{\sigma}{c\Gamma(\alpha)}\int^t_0\bigl\{\bigl(c^2\gamma+b^2e^{-\lambda\delta}\bigr)f_\lambda(t-s)-b(t-s-\delta)^{\alpha-1}_+\bigr\}\rd W(s)\\
	&\hspace{1cm}+\frac{\bigl(c^2\gamma+b^2e^{-\lambda\delta}\bigr)\lambda^{-\alpha}-b}{c}x_0\\
	&=\hat{v}(t)+\int^t_0g_\lambda(t-s)\hat{v}(s)\rd s+\int^\infty_te^{-\lambda(s-t)}g_\lambda(s-t)\bE_t[\hat{v}(s)]\rd s+\frac{\sigma}{c}\int^t_0g_\lambda(t-s)\rd W(s)+K_\lambda x_0,
\end{align*}
where $K_\lambda\in\bR$ and $g_\lambda:[0,\infty)\to\bR$ are defined by \eqref{LQ: K_lambda} and \eqref{LQ: g_lambda}, respectively. Therefore, by \cref{LQ: lemm_OC1}, we get the assertions.
\end{proof}


\begin{rem}\label{LQ: rem_g_lambda}
Noting that $\alpha\in(\frac{1}{2},1]$, simple calculations show that
\begin{equation}\label{LQ: g_lambda_estimate}
	|g_\lambda(\tau)|\leq\frac{c^2\gamma+b^2e^{-\lambda\delta}}{\Gamma(\alpha)}\lambda^{-\alpha}\tau^{\alpha-1}+\frac{|b|}{\Gamma(\alpha)}(\tau-\delta)^{\alpha-1}_+,\ \forall\,\tau>0,
\end{equation}
and that $g_\lambda\in L^{2,\beta}(0,\infty;\bR)$ for any $\beta<0$.
\end{rem}

By \cref{LQ: lemm_OC2}, the existence of an optimal control of the LQ regulator problem \eqref{LQ: state}--\eqref{LQ: cost} is characterized by the solvability of the (linear) stochastic Fredholm integral equation \eqref{LQ: SFIE}. In the following, we show that the stochastic Fredholm integral equation \eqref{LQ: SFIE} admits a unique solution $\hat{v}(\cdot)\in\cU_{-\mu}=L^{2,-\mu}_\bF(0,\infty;\bR)$ when the parameter $\mu>0$ is sufficiently large. A main idea is to divide the equation \eqref{LQ: SFIE} into two deterministic Fredholm integral equations which correspond to the ``expectation part'' and the ``martingale integrand part'' of $\hat{v}(\cdot)$, respectively, by means of the martingale representation theorem.

To do so, we need the following abstract lemma. We state the result in a multi-dimensional setting (that is, the Brownian motion $W(\cdot)$ is $d$-dimensional with $d\in\bN$).


\begin{lemm}\label{LQ: lemm_MR}
For any $v(\cdot)\in L^{2,\eta}_\bF(0,\infty;\bR^m)$ with $\eta\in\bR$ and $m\in\bN$, there exists a unique pair $(\varphi,\psi)$ of a deterministic function $\varphi:[0,\infty)\to\bR^m$ and a random field $\psi:\Omega\times[0,\infty)^2\to\bR^{m\times d}$ satisfying the following conditions:
\begin{itemize}
\item[(i)]
$\varphi\in L^{2,\eta}(0,\infty;\bR^m)$;
\item[(ii)]
$\psi(\cdot,\cdot)$ is measurable, $\psi(t,\cdot)$ is adapted for a.e.\ $t\geq0$, and $\bE\bigl[\int^\infty_0\int^\infty_0e^{2\eta(t+s)}|\psi(t,s)|^2\rd s\rd t\bigr]<\infty$;
\item[(iii)]
$v(t)=\varphi(t)+\int^t_0\psi(t-\theta,\theta)\rd W(\theta)$ for a.e.\ $t\geq0$, a.s.
\end{itemize}
\end{lemm}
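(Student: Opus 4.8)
The plan is to recognize the statement as a \emph{parametrized martingale representation theorem} and to prove it through an exact isometry combined with a density argument. First I would set $\varphi(t):=\bE[v(t)]$; since $v(t)$ is $\cF_t$-measurable and square-integrable, the ordinary martingale representation theorem gives, for each fixed $t\ge0$, a (unique up to $\mathrm{d}\theta\otimes\bP$) adapted integrand $\theta\mapsto z(t,\theta)\in L^2([0,t]\times\Omega;\bR^{m\times d})$ with $v(t)=\bE[v(t)]+\int^t_0z(t,\theta)\rd W(\theta)$. Setting $\psi(\tau,s):=z(\tau+s,s)$ (so that $\psi(t-\theta,\theta)=z(t,\theta)$), conditions (i)--(iii) become precisely the requirement that such a $z$ can be chosen jointly measurable with the weighted integrability in (ii); equivalently, that the pair $(v,z)$ lies in a weighted analogue of the space $\cM^{2,\eta}_\bF(0,\infty;\bR^m\times\bR^{m\times d})$, with $z$ supported on $\Delta^\comp[0,\infty)$.

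The key computation is the \emph{isometry identity}. For any pair $(\varphi,\psi)$ satisfying (i)--(iii), the integrand $\theta\mapsto\psi(t-\theta,\theta)$ lies in $L^2([0,t]\times\Omega)$ for a.e.\ $t$ (by (ii) and Fubini), so the stochastic integral is a true martingale; hence $\bE[v(t)]=\varphi(t)$ and, by It\^{o}'s isometry, $\bE[|v(t)|^2]=|\varphi(t)|^2+\bE\bigl[\int^t_0|\psi(t-\theta,\theta)|^2\rd\theta\bigr]$. Multiplying by $e^{2\eta t}$, integrating over $t\ge0$, and substituting $t=\tau+\theta$ in the last term gives
\begin{equation*}
	\bE\Bigl[\int^\infty_0e^{2\eta t}|v(t)|^2\rd t\Bigr]=\int^\infty_0e^{2\eta t}|\varphi(t)|^2\rd t+\bE\Bigl[\int^\infty_0\int^\infty_0e^{2\eta(t+s)}|\psi(t,s)|^2\rd s\rd t\Bigr].
\end{equation*}
In particular $\varphi\in L^{2,\eta}(0,\infty;\bR^m)$ is automatic, and applying this identity to the difference of two candidate pairs for the same $v$ yields \textbf{uniqueness} at once. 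For \textbf{existence} I would argue by density: on the dense subspace of simple adapted processes $v(t)=\sum_i\1_{(t_i,t_{i+1}]}(t)\xi_i$ with $\xi_i\in L^2_{\cF_{t_i}}(\Omega;\bR^m)$, the martingale representation theorem applied to each $\xi_i$ produces progressively measurable $\zeta_i\in L^2([0,t_i]\times\Omega;\bR^{m\times d})$, and one reads off explicitly $\varphi(t)=\sum_i\1_{(t_i,t_{i+1}]}(t)\bE[\xi_i]$ and $\psi(\tau,s)=\sum_i\1_{(t_i,t_{i+1}]}(\tau+s)\1_{[0,t_i]}(s)\zeta_i(s)$, which are jointly measurable and satisfy (i)--(iii). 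Since simple adapted processes are dense in $L^{2,\eta}_\bF(0,\infty;\bR^m)$ (pass to a progressively measurable modification, truncate the horizon using $\int^\infty_Te^{2\eta t}\bE|v(t)|^2\rd t\to0$, and approximate on each bounded interval), the isometry identity shows that $v\mapsto(\varphi,\psi)$ extends from this subspace to an isometry on all of $L^{2,\eta}_\bF(0,\infty;\bR^m)$ into the complete space of pairs obeying (i) and (ii); passing to the limit along simple approximants $v_n\to v$ and using It\^{o}'s isometry in $v_n(t)=\varphi_n(t)+\int^t_0\psi_n(t-\theta,\theta)\rd W(\theta)$ shows that the limiting pair still satisfies (iii).

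The delicate point -- the one I expect to require the most care -- is the \emph{joint measurability} of $\psi$ (equivalently of the parametrized integrand $z(t,\cdot)$) in $(t,s,\omega)$: there is no canonical measurable selection of the representing integrand as the parameter $t$ varies. The density argument sidesteps this, since measurability is transparent for simple $v$ and is inherited through the isometric extension because the codomain is a genuine complete space of random fields; alternatively one may invoke directly the well-posedness of the M-solution spaces established in \cite{Ha21+}. The remaining ingredients -- existence of progressively measurable modifications and joint measurability of the parametrized stochastic integral $t\mapsto\int^t_0\psi(t-\theta,\theta)\rd W(\theta)$ -- are routine and I would not dwell on them.
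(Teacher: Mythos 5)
Your proof is correct and follows essentially the same route as the paper's: pointwise martingale representation in $t$, the reparametrization $\psi(t,s)=z(t+s,s)$, the weighted It\^{o}/Fubini isometry identity, and an approximation argument to secure joint measurability. The only difference is cosmetic: the paper first treats $v$ with $t\mapsto v(t)$ continuous in $L^2(\Omega;\bR^m)$ (deducing continuity of $t\mapsto z(t,\cdot)$ and hence a jointly measurable version) and then invokes a standard approximation, whereas you start from simple adapted processes and extend by the isometry --- both are instances of the same density argument.
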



\begin{proof}
The uniqueness is clear. We prove the existence. Assume that the map $t\mapsto v(t)\in L^2_{\cF_\infty}(\Omega;\bR^m)$ is continuous. By the martingale representation theorem, for any $t\geq0$, there exists a unique process $z(t,\cdot)\in L^2_\bF(0,\infty;\bR^{m\times d})$ with $z(t,s)=0$ for $s>t$ such that $v(t)=\bE[v(t)]+\int^t_0z(t,s)\rd W(s)$ a.s. It is clear that the map $t\mapsto\bE[v(t)]$ is continuous. Furthermore, since $\bE\bigl[\int^\infty_0|z(t',s)-z(t,s)|^2\rd s\bigr]\leq\bE\bigl[|v(t')-v(t)|^2\bigr]\to0$ as $t'\to t$ for any $t\geq0$, we see that the map $t\mapsto z(t,\cdot)\in L^2_\bF(0,\infty;\bR^{m\times d})$ is also continuous. Thus, there exists a jointly measurable version $\tilde{z}:\Omega\times[0,\infty)^2\to\bR^{m\times d}$ of $z(\cdot,\cdot)$. Define $\varphi(t):=\bE[v(t)]$ and $\psi(t,s):=\tilde{z}(t+s,s)$ for $t,s\geq0$. Then $\varphi:[0,\infty)\to\bR^m$ and $\psi:\Omega\times[0,\infty)^2\to\bR^{m\times d}$ are jointly measurable, the process $\psi(t,\cdot)$ is adapted for each $t\geq0$, and it holds that $v(t)=\varphi(t)+\int^t_0\psi(t-\theta,\theta)\rd W(\theta)$ a.s.\ for any $t\geq0$. Moreover, by using Fubini's theorem and the change of variable formula,
\begin{align*}
	\bE\Bigl[\int^\infty_0e^{2\eta t}|v(t)|^2\rd t\Bigr]&=\int^\infty_0e^{2\eta t}|\varphi(t)|^2\rd t+\bE\Bigl[\int^\infty_0e^{2\eta t}\int^t_0|\psi(t-\theta,\theta)|^2\rd \theta\rd t\Bigr]\\
	&=\int^\infty_0e^{2\eta t}|\varphi(t)|^2\rd t+\bE\Bigl[\int^\infty_0e^{2\eta \theta}\int^\infty_\theta e^{2\eta(t-\theta)}|\psi(t-\theta,\theta)|^2\rd t\rd \theta\Bigr]\\
	&=\int^\infty_0e^{2\eta t}|\varphi(t)|^2\rd t+\bE\Bigl[\int^\infty_0e^{2\eta s}\int^\infty_0e^{2\eta t}|\psi(t,s)|^2\rd t\rd s\Bigr]\\
	&=\int^\infty_0e^{2\eta t}|\varphi(t)|^2\rd t+\bE\Bigl[\int^\infty_0\int^\infty_0e^{2\eta(t+s)}|\psi(t,s)|^2\rd s\rd t\Bigr],
\end{align*}
and hence $\varphi\in L^{2,\eta}(0,\infty;\bR^m)$ and $\bE\bigl[\int^\infty_0\int^\infty_0e^{2\eta(t+s)}|\psi(t,s)|^2\rd s\rd t\bigr]<\infty$. This proves the lemma when $t\mapsto v(t)\in L^2_{\cF_\infty}(\Omega,\bR^m)$ is continuous. The general case can be proved by a standard approximation technique (see Lemma~3.5 in \cite{Ha21+}), and thus we omit the details.
\end{proof}

The above lemma is a simple application of the martingale representation theorem and a reparametrization of the time-parameters. Note that, in the martingale integrand for $v(t)$, we use the parametrization of the form $\psi(t-\theta,\theta)\rd W(\theta)$, not of the form $z(t,s)\rd W(s)$. Surprisingly, it turns out that this simple reparametrization is appropriate for the study of the stochastic Fredholm integral equation \eqref{LQ: SFIE}, as shown in the following lemma.


\begin{lemm}\label{LQ: lemm_FIE}
Let $\mu>0$ and $\lambda\geq2\mu$. Fix an arbitrary $\hat{v}(\cdot)\in \cU_{-\mu}=L^{2,-\mu}_\bF(0,\infty;\bR)$, and denote the corresponding pair appearing in \cref{LQ: lemm_MR} by $(\hat{\varphi},\hat{\psi})$. Then the following two conditions are equivalent:
\begin{itemize}
\item[(i)]
$\hat{v}(\cdot)$ satisfies the stochastic Fredholm integral equation \eqref{LQ: SFIE}.
\item[(ii)]
The function $\hat{\varphi}\in L^{2,-\mu}(0,\infty;\bR)$ satisfies the deterministic Fredholm integral equation
\begin{equation}\label{LQ: DFIE_phi}
	\hat{\varphi}(t)+\int^\infty_0k_\lambda(t,s)\hat{\varphi}(s)\rd s+K_\lambda x_0=0,\ t\geq0.
\end{equation}
Furthermore, for a.e.\ $\theta\geq0$ and a.e.\ $\omega\in\Omega$, the function $\hat{\psi}\in L^{2,-\mu}(0,\infty;\bR)$ defined by $\hat{\psi}(t):=\hat{\psi}(t,\theta)(\omega)$, $t\geq0$, satisfies the deterministic Fredholm integral equation
\begin{equation}\label{LQ: DFIE_psi}
	\hat{\psi}(t)+\int^\infty_0k_\lambda(t,s)\hat{\psi}(s)\rd s+\frac{\sigma}{c}g_\lambda(t)=0,\ t\geq0.
\end{equation}
Here, $k_\lambda:[0,\infty)^2\to\bR$ is defined by
\begin{equation}\label{LQ: k_lambda}
	k_\lambda(t,s):=
	\begin{dcases}
	g_\lambda(t-s)\ &\text{for}\ (t,s)\in\Delta^\comp[0,\infty),\\
	e^{-\lambda(s-t)}g_\lambda(s-t)\ &\text{for}\ (t,s)\in\Delta[0,\infty).
	\end{dcases}
\end{equation}
\end{itemize}
\end{lemm}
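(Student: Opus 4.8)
The plan is to insert the martingale-representation decomposition of \cref{LQ: lemm_MR}, namely $\hat v(t)=\hat\varphi(t)+\int^t_0\hat\psi(t-\theta,\theta)\rd W(\theta)$, into the stochastic Fredholm integral equation \eqref{LQ: SFIE}, and then use the \emph{uniqueness} part of \cref{LQ: lemm_MR} to split \eqref{LQ: SFIE} into its ``deterministic part'' and its ``martingale-integrand part''. Write $\Lambda(\cdot)$ for the left-hand side of \eqref{LQ: SFIE}. The first step is to verify that, under $\mu>0$ and $\lambda\geq2\mu$, using the bound \eqref{LQ: g_lambda_estimate} and the facts from \cref{LQ: rem_g_lambda} that $g_\lambda\in L^{2,\beta}(0,\infty;\bR)$ for all $\beta<0$ and that the extra factor $e^{-\lambda(s-t)}$ makes $e^{-\lambda\cdot}g_\lambda(\cdot)\in L^{2,\beta}$ for all $\beta<\lambda$, each of the four terms of $\Lambda(\cdot)$ lies in a common weighted space $L^{2,\eta}_\bF(0,\infty;\bR)$; these are standard Young-type convolution estimates together with Cauchy--Schwarz in the exponentially weighted inner product. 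Consequently $\Lambda(\cdot)$ admits a unique decomposition as in \cref{LQ: lemm_MR}, and \eqref{LQ: SFIE} (i.e.\ $\Lambda\equiv0$) is equivalent to the vanishing of both its deterministic part and its martingale integrand.

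The second step is to compute these two parts explicitly. Substituting the decomposition of $\hat v$ into the convolution term $\int^t_0g_\lambda(t-s)\hat v(s)\rd s$ and into the anticipating term $\int^\infty_te^{-\lambda(s-t)}g_\lambda(s-t)\bE_t[\hat v(s)]\rd s$ --- here using $\bE_t[\hat v(s)]=\hat\varphi(s)+\int^t_0\hat\psi(s-\theta,\theta)\rd W(\theta)$ for $s\geq t$, since the stochastic integral over $[t,s]$ has zero $\cF_t$-conditional expectation --- and interchanging the $\rd s$- and $\rd W(\theta)$-integrals by the stochastic Fubini theorem, the deterministic part collapses to $\hat\varphi(t)+\int^t_0g_\lambda(t-s)\hat\varphi(s)\rd s+\int^\infty_te^{-\lambda(s-t)}g_\lambda(s-t)\hat\varphi(s)\rd s+K_\lambda x_0$, which by the definition \eqref{LQ: k_lambda} of $k_\lambda$ is precisely the left-hand side of \eqref{LQ: DFIE_phi}. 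The key observation is that, after the change of variables $r=s-\theta$ and with $\tau:=t-\theta$, the $\rd W(\theta)$-integrand at $\theta$ becomes a function of $(\tau,\theta)$ alone, namely
\[
	\hat\psi(\tau,\theta)+\int^\tau_0g_\lambda(\tau-r)\hat\psi(r,\theta)\rd r+\int^\infty_\tau e^{-\lambda(r-\tau)}g_\lambda(r-\tau)\hat\psi(r,\theta)\rd r+\frac{\sigma}{c}g_\lambda(\tau),
\]
where the last summand comes from the noise term $\frac{\sigma}{c}\int^t_0g_\lambda(t-s)\rd W(s)$; by \eqref{LQ: k_lambda} this equals the left-hand side of \eqref{LQ: DFIE_psi} evaluated at the function $\tau\mapsto\hat\psi(\tau,\theta)(\omega)$. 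This is exactly why the parametrization $\hat\psi(t-\theta,\theta)\rd W(\theta)$ of \cref{LQ: lemm_MR} is the right one: it turns the martingale-integrand identity into a Fredholm equation in the single variable $\tau$ governed by the \emph{same} kernel $k_\lambda$ as the expectation part, with only the forcing term changed.

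Putting the two computations together: by the equivalence from the first step, $\hat v$ solves \eqref{LQ: SFIE} if and only if the deterministic part vanishes --- which is \eqref{LQ: DFIE_phi} for $\hat\varphi$ --- and the martingale integrand vanishes for a.e.\ $(\tau,\theta)$, a.s. --- which, reading off the displayed formula, says exactly that for a.e.\ $\theta\geq0$ and a.e.\ $\omega$ the function $\hat\psi(\cdot,\theta)(\omega)$ solves \eqref{LQ: DFIE_psi}. This establishes (i)$\Leftrightarrow$(ii). I expect the main obstacle to be the bookkeeping required to justify the two invocations of stochastic Fubini and, in tandem, to confirm that $\Lambda(\cdot)$ and the pieces into which it decomposes satisfy the integrability conditions (i)--(ii) of \cref{LQ: lemm_MR}, so that its uniqueness statement applies; the delicate point there is the anticipating term, where the exponential weight $e^{-\lambda(s-t)}$ must be balanced against the $L^{2,-\mu}$-type growth of $\hat\psi$ in its first argument --- this is precisely where the hypothesis $\lambda\geq2\mu$ (in fact $\lambda>\mu$ would suffice) enters. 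Once that is in place, the recombination of the integrals into $k_\lambda$ is purely algebraic.
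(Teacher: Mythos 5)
Your proposal is correct and follows essentially the same route as the paper: substitute the decomposition $\hat v(t)=\hat\varphi(t)+\int^t_0\hat\psi(t-\theta,\theta)\rd W(\theta)$ into \eqref{LQ: SFIE}, use $\bE_t[\hat v(s)]=\hat\varphi(s)+\int^t_0\hat\psi(s-\theta,\theta)\rd W(\theta)$ and the stochastic Fubini theorem, and then separate the deterministic part from the $\rd W(\theta)$-integrand, the change of variables $\tau=t-\theta$ recombining both parts into the single kernel $k_\lambda$. The paper justifies the final splitting by the direct observation that a deterministic function plus a stochastic integral vanishes a.s.\ iff both pieces vanish, which is the same uniqueness fact from \cref{LQ: lemm_MR} that you invoke.
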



\begin{proof}
By using the representation formula $\hat{v}(t)=\hat{\varphi}(t)+\int^t_0\hat{\psi}(t-\theta,\theta)\rd W(\theta)$ and the stochastic Fubini's theorem, we have
\begin{align*}
	&\hat{v}(t)+\int^t_0g_\lambda(t-s)\hat{v}(s)\rd s+\int^\infty_te^{-\lambda(s-t)}g_\lambda(s-t)\bE_t[\hat{v}(s)]\rd s+\frac{\sigma}{c}\int^t_0g_\lambda(t-s)\rd W(s)+K_\lambda x_0\\
	&=\hat{\varphi}(t)+\int^t_0\hat{\psi}(t-\theta,\theta)\rd W(\theta)+\int^t_0g_\lambda(t-s)\Bigl\{\hat{\varphi}(s)+\int^s_0\hat{\psi}(s-\theta,\theta)\rd W(\theta)\Bigr\}\rd s\\
	&\hspace{1cm}+\int^\infty_te^{-\lambda(s-t)}g_\lambda(s-t)\Bigl\{\hat{\varphi}(s)+\int^t_0\hat{\psi}(s-\theta,\theta)\rd W(\theta)\Bigr\}\rd s+\frac{\sigma}{c}\int^t_0g_\lambda(t-\theta)\rd W(\theta)+K_\lambda x_0\\
	&=\hat{\varphi}(t)+\int^t_0g_\lambda(t-s)\hat{\varphi}(s)\rd s+\int^\infty_te^{-\lambda(s-t)}g_\lambda(s-t)\hat{\varphi}(s)\rd s+K_\lambda x_0\\
	&\hspace{1cm}+\int^t_0\Bigl\{\hat{\psi}(t-\theta,\theta)+\int^t_\theta g_\lambda(t-s)\hat{\psi}(s-\theta,\theta)\rd s\\
	&\hspace{3cm}+\int^\infty_te^{-\lambda(s-t)}g_\lambda(s-t)\hat{\psi}(s-\theta,\theta)\rd s+\frac{\sigma}{c}g_\lambda(t-\theta)\Bigr\}\rd W(\theta)\\
	&=\hat{\varphi}(t)+\int^\infty_0k_\lambda(t,s)\hat{\varphi}(s)\rd s+K_\lambda x_0\\
	&\hspace{1cm}+\int^t_0\Bigl\{\hat{\psi}(t-\theta,\theta)+\int^\infty_0 k_\lambda(t-\theta,s)\hat{\psi}(s,\theta)\rd s+\frac{\sigma}{c}g_\lambda(t-\theta)\Bigr\}\rd W(\theta)
\end{align*}
for a.e.\ $t\geq0$, a.s. Since $\hat{\varphi}$ is deterministic, for each $t\geq0$, the above is equal to zero a.s.\ if and only if
\begin{equation*}
	\hat{\varphi}(t)+\int^\infty_0k_\lambda(t,s)\hat{\varphi}(s)\rd s+K_\lambda x_0=0
\end{equation*}
and
\begin{equation*}
	\hat{\psi}(t-\theta,\theta)+\int^\infty_0 k_\lambda(t-\theta,s)\hat{\psi}(s,\theta)\rd s+\frac{\sigma}{c}g_\lambda(t-\theta)=0
\end{equation*}
for a.e.\ $\theta\in[0,t]$, a.s. Clearly, the last equality holds for a.e.\ $t\geq0$ and a.e.\ $\theta\in[0,t]$ if and only if
\begin{equation*}
	\hat{\psi}(t,\theta)+\int^\infty_0 k_\lambda(t,s)\hat{\psi}(s,\theta)\rd s+\frac{\sigma}{c}g_\lambda(t)=0
\end{equation*}
for a.e.\ $t\geq0$ and a.e.\ $\theta\geq0$. This completes the proof.
\end{proof}

\begin{rem}\label{5-2: rem: FIE}
It is remarkable that if $\hat{v}(t)=\hat{\varphi}(t)+\int^t_0\hat{\psi}(t-\theta,\theta)\rd W(\theta)$ satisfies the stochastic Fredholm integral equation \eqref{LQ: SFIE}, then the function $t\mapsto\hat{\psi}(t,s)(\omega)$ solves the common deterministic Fredholm integral equation \eqref{LQ: DFIE_psi} for a.e.\ $s\geq0$, a.s. Therefore, if \eqref{LQ: DFIE_psi} has at most one solution, then $\hat{\psi}(t)=\hat{\psi}(t,s)(\omega)$ is deterministic and independent of the second time-parameter $s\geq0$, and thus $\hat{v}(\cdot)$ must be a Gaussian process of the form of a stochastic convolution $\hat{v}(t)=\hat{\varphi}(t)+\int^t_0\hat{\psi}(t-\theta)\rd W(\theta)$.
\end{rem}

From the above lemma, we see that the linear deterministic Fredholm integral equations of the form
\begin{equation}\label{LQ: DFIE_abstract}
	x(t)+\int^\infty_0k_\lambda(t,s)x(s)\rd s=a(t),\ t\geq0,
\end{equation}
with $a(t)=-K_\lambda x_0$ and $a(t)=-\frac{\sigma}{c}g_\lambda(t)$ characterize the optimal control. This type of equations can be treated by means of the resolvent of the kernel $k_\lambda$. For the general theory on deterministic Fredholm integral equations, we refer to Chapter~9 of the textbook \cite{GrLoSt90}. Specifically, as in Definitions~9.2.1 and 9.2.2 in \cite{GrLoSt90}, for each $p,q\in[1,\infty]$ satisfying $\frac{1}{p}+\frac{1}{q}=1$, we say that $k:[0,\infty)^2\to\bR$ is a \emph{Fredholm kernel} of type $L^p$ on $[0,\infty)$ if it is measurable and $\res k\res_{L^p(0,\infty;\bR)}<\infty$, where
\begin{equation*}
	\res k\res_{L^p(0,\infty;\bR)}:=\sup_{\substack{\|g\|_{L^q(0,\infty;\bR)\leq1}\\\|f\|_{L^p(0,\infty;\bR)\leq1}}}\int^\infty_0\int^\infty_0|g(t)k(t,s)f(s)|\rd s\rd t,
\end{equation*}
and the supremum is taken over functions $g$ and $f$ in $L^q(0,\infty;\bR)$ and $L^p(0,\infty;\bR)$, respectively. Furthermore, for each $\eta\in\bR$, we say that $k$ is a Fredholm kernel of type $L^{p,\eta}$ on $[0,\infty)$ if the function $(t,s)\mapsto e^{\eta(t-s)}k(t,s)$ is a Fredholm kernel of type $L^p$ on $[0,\infty)$. The following lemma shows the solvability of the deterministic Fredholm integral equation~\eqref{LQ: DFIE_abstract}. Recall that the constant $\rho_\alpha$ is defined as the positive number satisfying $|b|(1+e^{-\rho_\alpha\delta})\rho^{-\alpha}_\alpha=1$ if $b\neq0$, and $\rho_\alpha:=0$ if $b=0$.


\begin{lemm}\label{LQ: lemm_resolvent}
Define $\tilde{\rho}_\alpha>\rho_\alpha$ as the number satisfying
\begin{equation}\label{LQ: criterion}
	\bigl\{\bigl(c^2\gamma+b^2e^{-2\tilde{\rho}_\alpha\delta}\bigr)(2\tilde{\rho}_\alpha)^{-\alpha}+|b|(1+e^{-\tilde{\rho}_\alpha\delta})\bigr\}\tilde{\rho}^{-\alpha}_\alpha=\frac{1}{2}.
\end{equation}
Then for any $\mu>\tilde{\rho}_\alpha$ and $\lambda\geq2\mu$, the following assertions hold:
\begin{itemize}
\item[(i)]
$k_\lambda$ is a Fredholm kernel of type $L^{2,-\mu}$ on $[0,\infty)$.
\item[(ii)]
There exists a unique \emph{Fredholm resolvent} $r_\lambda$ of type $L^{2,-\mu}$ of $k_\lambda$ on $[0,\infty)$, that is, $r_\lambda$ is the unique Fredholm kernel of type $L^{2,-\mu}$ on $[0,\infty)$ satisfying the resolvent equation
\begin{equation*}
	r_\lambda(t,s)+\int^\infty_0k_\lambda(t,\theta)r_\lambda(\theta,s)\rd\theta=r_\lambda(t,s)+\int^\infty_0r_\lambda(t,\theta)k_\lambda(\theta,s)\rd\theta=k_\lambda(t,s)
\end{equation*}
for a.e.\ $t,s\geq0$.
\item[(iii)]
For any $a\in L^{2,-\mu}(0,\infty;\bR)$, the Fredholm integral equation \eqref{LQ: DFIE_abstract} admits a unique solution $x$ in $L^{2,-\mu}(0,\infty;\bR)$. Furthermore, this solution is given by the variation of constant formula
\begin{equation*}
	x(t)=a(t)-\int^\infty_0r_\lambda(t,s)a(s)\rd s
\end{equation*}
for a.e.\ $t\geq0$.
\end{itemize}
\end{lemm}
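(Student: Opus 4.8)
The plan is to derive all three assertions from the general theory of Fredholm kernels of type $L^{p,\eta}$ on $[0,\infty)$ developed in Chapter~9 of \cite{GrLoSt90}; the only genuinely new ingredient is a quantitative bound on the Fredholm norm $\res k_\lambda\res_{L^{2,-\mu}}$. As preliminaries I would record, from \cref{LQ: rem_g_lambda}: (a) since $\alpha\in(\frac12,1]$, one has $g_\lambda\in L^{2,\beta}(0,\infty;\bR)$ for every $\beta<0$; (b) using $\int^\infty_0e^{-\rho\tau}\tau^{\alpha-1}\rd\tau=\Gamma(\alpha)\rho^{-\alpha}$ for $\rho>0$, the weighted quantity $G(\rho):=\int^\infty_0e^{-\rho\tau}|g_\lambda(\tau)|\rd\tau$ is finite for every $\rho>0$, is non-increasing in $\rho$, and satisfies $G(\rho)\le(c^2\gamma+b^2e^{-\lambda\delta})\lambda^{-\alpha}\rho^{-\alpha}+|b|e^{-\rho\delta}\rho^{-\alpha}$. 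I would also note that $\rho\mapsto\rho^{-\alpha}\bigl\{(c^2\gamma+b^2e^{-2\rho\delta})(2\rho)^{-\alpha}+|b|(1+e^{-\rho\delta})\bigr\}$ is positive and strictly decreasing on $(0,\infty)$, tends to $+\infty$ as $\rho\to0^+$ and to $0$ as $\rho\to\infty$, and at $\rho=\rho_\alpha$ (for $b\neq0$) equals $1$ plus a strictly positive quantity, hence exceeds $1$; thus $\tilde\rho_\alpha$ is well defined by \eqref{LQ: criterion} and automatically $\tilde\rho_\alpha>\rho_\alpha$ (the case $b=0$ being similar, since then the function still blows up at $0^+$ and $\rho_\alpha=0$).

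For part (i), set $\tilde{k}_\lambda(t,s):=e^{-\mu(t-s)}k_\lambda(t,s)$, so that by \eqref{LQ: k_lambda} one has $\tilde{k}_\lambda(t,s)=e^{-\mu(t-s)}g_\lambda(t-s)$ on $\Delta^\comp[0,\infty)$ and $\tilde{k}_\lambda(t,s)=e^{-(\lambda-\mu)(s-t)}g_\lambda(s-t)$ on $\Delta[0,\infty)$; by definition $k_\lambda$ is a Fredholm kernel of type $L^{2,-\mu}$ iff $\tilde{k}_\lambda$ is one of type $L^2$, and then $\res k_\lambda\res_{L^{2,-\mu}}=\res\tilde{k}_\lambda\res_{L^2(0,\infty;\bR)}$. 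Splitting the integral $\int^\infty_0|\tilde{k}_\lambda(t,s)|\rd s$ at $s=t$, changing variables, and using $\lambda-\mu\ge\mu>0$ together with the monotonicity of $G$, I get $\int^\infty_0|\tilde{k}_\lambda(t,s)|\rd s\le G(\mu)+G(\lambda-\mu)\le2G(\mu)$ for every $t\ge0$, and the analogous bound for $\sup_s\int^\infty_0|\tilde{k}_\lambda(t,s)|\rd t$; hence, by Schur's test, $\res k_\lambda\res_{L^{2,-\mu}}\le2G(\mu)<\infty$, which is assertion (i). Feeding in the bound on $G(\mu)$ and $\lambda\ge2\mu$ (so $\lambda^{-\alpha}\le(2\mu)^{-\alpha}$ and $e^{-\lambda\delta}\le e^{-2\mu\delta}$) yields $\res k_\lambda\res_{L^{2,-\mu}}\le2\mu^{-\alpha}\bigl\{(c^2\gamma+b^2e^{-2\mu\delta})(2\mu)^{-\alpha}+|b|e^{-\mu\delta}\bigr\}\le2\mu^{-\alpha}\bigl\{(c^2\gamma+b^2e^{-2\mu\delta})(2\mu)^{-\alpha}+|b|(1+e^{-\mu\delta})\bigr\}$, and by the monotonicity observed above and the defining relation \eqref{LQ: criterion} this last expression equals $1$ at $\mu=\tilde\rho_\alpha$ and is $<1$ whenever $\mu>\tilde\rho_\alpha$.

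For part (ii), since $k_\lambda$ is a Fredholm kernel of type $L^{2,-\mu}$ with $\res k_\lambda\res_{L^{2,-\mu}}<1$, the Neumann iteration for $k_\lambda$ converges in Fredholm norm and produces a Fredholm kernel $r_\lambda$ of type $L^{2,-\mu}$ satisfying both resolvent equations; uniqueness of such $r_\lambda$ is part of the general theory in \cite{GrLoSt90}. For part (iii), both right-hand sides relevant to the earlier lemmas lie in $L^{2,-\mu}(0,\infty;\bR)$ (the constant $-K_\lambda x_0$ because $\mu>0$, and $-\tfrac{\sigma}{c}g_\lambda$ by preliminary (a), as $-\mu<0$), so it suffices to treat a general $a\in L^{2,-\mu}(0,\infty;\bR)$: substituting $x(t):=a(t)-\int^\infty_0r_\lambda(t,s)a(s)\rd s$ into \eqref{LQ: DFIE_abstract} and using $r_\lambda(t,s)+\int^\infty_0k_\lambda(t,\theta)r_\lambda(\theta,s)\rd\theta=k_\lambda(t,s)$ shows this $x$ solves \eqref{LQ: DFIE_abstract} and lies in $L^{2,-\mu}(0,\infty;\bR)$, while if $x\in L^{2,-\mu}(0,\infty;\bR)$ satisfies the homogeneous equation then composing with $r_\lambda$ and using $r_\lambda(t,s)+\int^\infty_0r_\lambda(t,\theta)k_\lambda(\theta,s)\rd\theta=k_\lambda(t,s)$ forces $x=0$; this is precisely the variation-of-constants argument for Fredholm equations admitting a resolvent of type $L^{2,-\mu}$ (\cite{GrLoSt90}, Chapter~9).

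The main obstacle is the Fredholm-norm estimate in the second paragraph: one must produce a bound that is simultaneously finite (for (i)) and strictly below $1$ for every $\mu>\tilde\rho_\alpha$ (which is what powers (ii) and (iii)). This requires combining the sharp pointwise estimate \eqref{LQ: g_lambda_estimate} for $g_\lambda$, the explicit Gamma integral, a Schur-test estimate carried out separately on the two triangular pieces of $k_\lambda$, and a careful use of $\lambda\ge2\mu$ to trade all $\lambda$-dependence for $\mu$-dependence, so that the resulting sufficient condition is exactly the one built into \eqref{LQ: criterion}. Once this bound is in hand, parts (ii) and (iii) follow immediately from the general Fredholm theory of \cite{GrLoSt90}.
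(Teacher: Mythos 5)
Your proposal is correct and follows essentially the same route as the paper: reduce to the unweighted kernel $\tilde{k}_{\mu,\lambda}(t,s)=e^{-\mu(t-s)}k_\lambda(t,s)$, bound both marginal integrals $\sup_t\int|\tilde{k}_{\mu,\lambda}(t,s)|\rd s$ and $\sup_s\int|\tilde{k}_{\mu,\lambda}(t,s)|\rd t$ by the same convolution-type quantity using \eqref{LQ: g_lambda_estimate}, the Gamma integral, and $\lambda\geq2\mu$, conclude $\res k_\lambda\res_{L^{2,-\mu}}<1$ for $\mu>\tilde{\rho}_\alpha$, and then invoke the resolvent/variation-of-constants machinery of \cite{GrLoSt90}. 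The only cosmetic difference is that you pass from the $L^1$ and $L^\infty$ marginal bounds to the $L^2$ Fredholm norm via Schur's test, whereas the paper cites the Riesz interpolation theorem for Fredholm kernels (Theorem~9.2.6 in \cite{GrLoSt90}); these yield the identical estimate.
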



\begin{proof}
Let $\mu>\tilde{\rho}_\alpha$ and $\lambda\geq2\mu$, and define $\tilde{k}_{\mu,\lambda}(t,s):=e^{-\mu(t-s)}k_\lambda(t,s)$ for $t,s\geq0$. Then the conditions (i), (ii) and (iii) in this lemma are equivalent to the following conditions, respectively:
\begin{itemize}
\item[(i)']
$\tilde{k}_{\mu,\lambda}$ is a Fredholm kernel of type $L^2$ on $[0,\infty)$.
\item[(ii)']
There exists a unique Fredholm resolvent $\tilde{r}_{\mu,\lambda}$ of type $L^2$ of $\tilde{k}_{\mu,\lambda}$ on $[0,\infty)$.
\item[(iii)']
For any $\tilde{a}\in L^2(0,\infty;\bR)$, the Fredholm integral equation
\begin{equation*}
	\tilde{x}(t)+\int^\infty_0\tilde{k}_{\mu,\lambda}(t,s)\tilde{x}(s)\rd s=\tilde{a}(t),\ t\geq0,
\end{equation*}
admits a unique solution $\tilde{x}$ in $L^2(0,\infty;\bR)$. Furthermore, this solution is given by the variation of constant formula
\begin{equation*}
	\tilde{x}(t)=\tilde{a}(t)-\int^\infty_0\tilde{r}_{\mu,\lambda}(t,s)\tilde{a}(s)\rd s
\end{equation*}
for a.e.\ $t\geq0$.
\end{itemize}
Indeed, the equivalence (i)\,$\Leftrightarrow$\,(i)' is clear from the definition, and (ii)\,$\Leftrightarrow$\,(ii)'  and (iii)\,$\Leftrightarrow$\,(iii)' easily follow by considering the relations $\tilde{r}_{\mu,\lambda}(t,s)=e^{-\mu(t-s)}r_\lambda(t,s)$, $\tilde{a}(t)=e^{-\mu t}a(t)$ and $\tilde{x}(t)=e^{-\mu t}x(t)$. For more detailed treatments of integral equations in weighted spaces, see Chapter~4 in \cite{GrLoSt90}.

We show that $\res\tilde{k}_{\mu,\lambda}\res_{L^2(0,\infty;\bR)}<1$. If this is the case, the assertions (i)', (ii)' and (iii)' follow from Corollary~9.3.10, Lemma~9.3.3, and Theorem~9.3.6 in \cite{GrLoSt90}. Note that
\begin{equation*}
	\tilde{k}_{\mu,\lambda}(t,s)=
	\begin{dcases}
	e^{-\mu(t-s)}g_\lambda(t-s)\ &\text{for}\ (t,s)\in\Delta^\comp[0,\infty),\\
	e^{-(\lambda-\mu)(s-t)}g_\lambda(s-t)\ &\text{for}\ (t,s)\in\Delta[0,\infty).
	\end{dcases}
\end{equation*}
Noting $\lambda\geq2\mu$ and \eqref{LQ: g_lambda_estimate}, we see that $|\tilde{k}_{\mu,\lambda}(t,s)|\leq M_\mu(t-s)$ for any $(t,s)\in\{(t,s)\in[0,\infty)^2\,|\,t\neq s\}$, where $M_\mu\in L^1(\bR;\bR)$ is defined by
\begin{equation*}
	M_\mu(x):=\frac{c^2\gamma+b^2e^{-2\mu\delta}}{\Gamma(\alpha)}(2\mu)^{-\alpha}e^{-\mu|x|}|x|^{\alpha-1}_++\frac{|b|}{\Gamma(\alpha)}e^{-\mu|x|}(|x|-\delta)^{\alpha-1}_+,\ x\in\bR.
\end{equation*}
Here, we recall the definition $x^{\alpha-1}_+:=x^{\alpha-1}\1_{(0,\infty)}(x)$ for any $x\in\bR$. Noting Proposition~9.2.7 in \cite{GrLoSt90}, we have
\begin{align*}
	\res\tilde{k}_{\mu,\lambda}\res_{L^\infty(0,\infty;\bR)}&=\underset{t\geq0}{\mathrm{ess\,sup}}\int^\infty_0|\tilde{k}_{\mu,\lambda}(t,s)|\rd s\\
	&\leq\int^\infty_{-\infty}M_\mu(x)\rd x=2\bigl\{\bigl(c^2\gamma+b^2e^{-2\mu\delta}\bigr)(2\mu)^{-\alpha}+|b|e^{-\mu\delta}\bigr\}\mu^{-\alpha}\\
	&<1,
\end{align*}
where the last inequality follows since $\mu>\tilde{\rho}_\alpha$ and $\tilde{\rho}_\alpha$ satisfies \eqref{LQ: criterion}.
Similarly, we have
\begin{equation*}
	\res\tilde{k}_{\mu,\lambda}\res_{L^1(0,\infty;\bR)}=\underset{s\geq0}{\mathrm{ess\,sup}}\int^\infty_0|\tilde{k}_{\mu,\lambda}(t,s)|\rd t\leq\int^\infty_{-\infty}M_\mu(x)\rd x<1.
\end{equation*}
Thus, the Fredholm kernel $\tilde{k}_{\mu,\lambda}$ is of both type $L^1$ and $L^\infty$ on $[0,\infty)$. By the Riesz interpolation theorem for Fredholm kernels (see Theorem~9.2.6 in \cite{GrLoSt90}), we see that $\tilde{k}_{\mu,\lambda}$ is of type $L^2$ on $[0,\infty)$ and satisfies
\begin{equation*}
	\res\tilde{k}_{\mu,\lambda}\res_{L^2(0,\infty;\bR)}\leq\res\tilde{k}_{\mu,\lambda}\res^{1/2}_{L^1(0,\infty;\bR)}\,\res\tilde{k}_{\mu,\lambda}\res^{1/2}_{L^\infty(0,\infty;\bR)}<1.
\end{equation*}
Therefore, we obtain the assertions.
\end{proof}

Now we are ready to state the main result of this subsection.


\begin{theo}\label{LQ: theo_characterization}
Let $\tilde{\rho}_\alpha>\rho_\alpha $ be the constant specified by \eqref{LQ: criterion}, and fix $\lambda>2\tilde{\rho}_\alpha$. Then there exists a unique optimal control $\hat{u}(\cdot)$ of the LQ regulator problem \eqref{LQ: state}--\eqref{LQ: cost} over all control processes in $\cU_{-\lambda/2}$. Furthermore, the optimal control $\hat{u}(\cdot)$ belongs to the smaller control space $\cU_{-\mu}$ for any $\mu\in(\tilde{\rho}_\alpha,\lambda/2]$, and it is given by the following Gaussian state-feedback representation formula:
\begin{align*}
	\hat{u}(t)=&-\frac{b}{c}\hat{X}(t-\delta)+\frac{\bigl(c^2\gamma+b^2e^{-\lambda\delta}\bigr)\lambda^{-\alpha}-b}{c}\Bigl(\int^\infty_0r_\lambda(t,s)\rd s-1\Bigr)x_0\\
	&\hspace{1cm}+\frac{\sigma}{c}\int^t_0\Bigl\{\int^\infty_0r_\lambda(t-\theta,s)g_\lambda(s)\rd s-g_\lambda(t-\theta)\Bigr\}\rd W(\theta),\ t\geq0,
\end{align*}
where $\hat{X}(\cdot):=X^{\hat{u}}(\cdot)\in L^{2,-\mu}_\bF(0,\infty;\bR)$ is the optimal state process corresponding to $\hat{u}(\cdot)$, $g_\lambda:[0,\infty)\to\bR$ is defined by \eqref{LQ: g_lambda}, $r_\lambda:[0,\infty)^2\to\bR$ is the Fredholm resolvent of type $L^{2,-\mu}$ of $k_\lambda$ on $[0,\infty)$, and $k_\lambda:[0,\infty)^2\to\bR$ is the Fredholm kernel of type $L^{2,-\mu}$ on $[0,\infty)$ defined by \eqref{LQ: k_lambda}.
\end{theo}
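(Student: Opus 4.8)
The proof assembles the reduction lemmas \cref{LQ: lemm_OC2,LQ: lemm_FIE} with the resolvent theory of \cref{LQ: lemm_resolvent}, and then invokes the uniqueness in \cref{LQ: lemm_unique}. Fix $\lambda>2\tilde{\rho}_\alpha$ once and for all, so that every $\mu\in(\tilde{\rho}_\alpha,\lambda/2]$ satisfies $\mu>\rho_\alpha$ and $\lambda\geq2\mu$; hence all the lemmas of this subsection are available at level $\mu$, and $X^u(\cdot)$, $J_\lambda(u(\cdot))$ are well-defined for $u(\cdot)\in\cU_{-\mu}$.

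First I would construct the candidate. Fix $\mu\in(\tilde{\rho}_\alpha,\lambda/2]$. The constant function $t\mapsto-K_\lambda x_0$ lies in $L^{2,-\mu}(0,\infty;\bR)$ since $\mu>0$, and $-\frac{\sigma}{c}g_\lambda\in L^{2,-\mu}(0,\infty;\bR)$ by \cref{LQ: rem_g_lambda}. Hence \cref{LQ: lemm_resolvent}(iii) provides unique solutions $\hat{\varphi},\hat{\psi}_0\in L^{2,-\mu}(0,\infty;\bR)$ of the deterministic Fredholm integral equations \eqref{LQ: DFIE_phi} and \eqref{LQ: DFIE_psi}, together with the variation-of-constants formulas $\hat{\varphi}(t)=-K_\lambda x_0+K_\lambda x_0\int^\infty_0r_\lambda(t,s)\rd s$ and $\hat{\psi}_0(t)=-\frac{\sigma}{c}g_\lambda(t)+\frac{\sigma}{c}\int^\infty_0r_\lambda(t,s)g_\lambda(s)\rd s$. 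Put $\hat{v}(t):=\hat{\varphi}(t)+\int^t_0\hat{\psi}_0(t-\theta)\rd W(\theta)$. The isometry computation carried out in the proof of \cref{LQ: lemm_MR} (with $\eta=-\mu$, using $\int^\infty_0e^{-2\mu s}\rd s<\infty$) shows $\hat{v}\in L^{2,-\mu}_\bF(0,\infty;\bR)=\cU_{-\mu}$ and that $(\hat{\varphi},(t,s)\mapsto\hat{\psi}_0(t))$ is, by the uniqueness part of \cref{LQ: lemm_MR}, exactly the pair attached to $\hat{v}$. Consequently \cref{LQ: lemm_FIE} applies in the direction (ii)$\Rightarrow$(i), so $\hat{v}$ solves the stochastic Fredholm integral equation \eqref{LQ: SFIE}.

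Next I would deduce optimality, uniqueness, and the formula. Setting $\hat{u}:=\cT^{-1}[\hat{v}]\in\cU_{-\mu}$, \cref{LQ: lemm_OC2} shows $\hat{u}$ is optimal over $\cU_{-\mu}$. Because the map \eqref{FSDDE: convex} is convex in this LQ setting and the optimality condition \eqref{LQ: OC1} does not depend on $\mu$, the reasoning of \cref{SVIE: rem_large} (equivalently, the sufficient part of \cref{SDVIE: theo_MP} applied on $\cU_{-\lambda/2}$) upgrades this to optimality over $\cU_{-\lambda/2}$; thus an optimal control over $\cU_{-\lambda/2}$ exists. On the other hand, \cref{LQ: lemm_unique} with $\mu=\lambda/2$ (admissible since $\lambda/2>\tilde{\rho}_\alpha>\rho_\alpha$) shows there is at most one optimal control in $\cU_{-\lambda/2}$. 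Hence there is exactly one optimal control $\hat{u}$ over $\cU_{-\lambda/2}$; and since the construction above, carried out for an arbitrary $\mu\in(\tilde{\rho}_\alpha,\lambda/2]$, produces an optimal control in $\cU_{-\mu}$, that control must coincide with $\hat{u}$, so $\hat{u}\in\cU_{-\mu}$ for every such $\mu$. Finally, $\hat{u}(t)=\cT^{-1}[\hat{v}](t)=-\frac{b}{c}\cX^{\hat{v}}(t-\delta)+\hat{v}(t)$ with $\cX^{\hat{v}}(\cdot)=X^{\hat{u}}(\cdot)=\hat{X}(\cdot)$; inserting the formulas for $\hat{\varphi}$ and $\hat{\psi}_0$ from the construction and recalling $K_\lambda$ from \eqref{LQ: K_lambda} yields precisely the asserted Gaussian state-feedback representation.

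The conceptual content is already packaged in the preceding lemmas, so the main obstacles are bookkeeping ones: (a) verifying that the stochastically built $\hat{v}$ admits the \cref{LQ: lemm_MR}-decomposition with a \emph{deterministic} integrand $\hat{\psi}_0$ that is independent of the second time parameter — this is exactly the phenomenon recorded in \cref{5-2: rem: FIE}, forced by uniqueness of the solution to \eqref{LQ: DFIE_psi}; and (b) ensuring that the passage between $\cU_{-\mu}$ and $\cU_{-\lambda/2}$ is consistent, i.e.\ that the a priori $\mu$-dependent resolvent $r_\lambda$ of \cref{LQ: lemm_resolvent} nonetheless produces one and the same optimal control — which again follows from uniqueness (\cref{LQ: lemm_unique}). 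No new estimate beyond those in \cref{LQ: lemm_resolvent,LQ: lemm_MR} is required.
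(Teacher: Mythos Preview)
Your proof is correct and follows essentially the same route as the paper's: solve the deterministic Fredholm equations \eqref{LQ: DFIE_phi}--\eqref{LQ: DFIE_psi} via \cref{LQ: lemm_resolvent}, assemble $\hat{v}$ and invoke \cref{LQ: lemm_FIE} and \cref{LQ: lemm_OC2} to produce an optimal $\hat{u}=\cT^{-1}[\hat{v}]$, upgrade from $\cU_{-\mu}$ to $\cU_{-\lambda/2}$ via \cref{SVIE: rem_large}, and conclude uniqueness from \cref{LQ: lemm_unique}. You are in fact slightly more explicit than the paper on two bookkeeping points---checking that the deterministic $\hat{\psi}_0$ really furnishes the \cref{LQ: lemm_MR}-pair for $\hat{v}$, and arguing via uniqueness that the optimal control is independent of the choice of $\mu\in(\tilde{\rho}_\alpha,\lambda/2]$---but these are refinements of the same argument rather than a different approach.
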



\begin{proof}
Note that each real constant can be seen as an element of $\bigcap_{\beta<0}L^{2,\beta}(0,\infty;\bR)$. Also, by \cref{LQ: rem_g_lambda}, the function $g_\lambda$ is in $\bigcap_{\beta<0}L^{2,\beta}(0,\infty;\bR)$. Fix an arbitrary constant $\mu\in(\tilde{\rho}_\alpha,\lambda/2]$. By \cref{LQ: lemm_resolvent}, the deterministic Fredholm integral equations \eqref{LQ: DFIE_phi} and \eqref{LQ: DFIE_psi} admit unique solutions $\hat{\varphi}$ and $\hat{\psi}$ in $L^{2,-\mu}(0,\infty;\bR)$ given by
\begin{equation*}
	\hat{\varphi}(t)=-K_\lambda x_0+\int^\infty_0r_\lambda(t,s)K_\lambda x_0\rd s=\frac{\bigl(c^2\gamma+b^2e^{-\lambda\delta}\bigr)\lambda^{-\alpha}-b}{c}\Bigl(\int^\infty_0r_\lambda(t,s)\rd s-1\Bigr)x_0,\ t\geq0,
\end{equation*}
and
\begin{equation*}
	\hat{\psi}(t)=-\frac{\sigma}{c}g_\lambda(t)+\int^\infty_0r_\lambda(t,s)\frac{\sigma}{c}g_\lambda(s)\rd s=\frac{\sigma}{c}\Bigl(\int^\infty_0r_\lambda(t,s)g_\lambda(s)\rd s-g_\lambda(t)\Bigr),\ t\geq0,
\end{equation*}
respectively. Then, \cref{LQ: lemm_FIE} implies that the stochastic Fredholm integral equation \eqref{LQ: SFIE} admits a unique solution $\hat{v}(\cdot)$ in $\cU_{-\mu}$, and this solution is a Gaussian process of the form
\begin{align*}
	\hat{v}(t)&=\hat{\varphi}(t)+\int^t_0\hat{\psi}(t-\theta)\rd W(\theta)\\
	&=\frac{\bigl(c^2\gamma+b^2e^{-\lambda\delta}\bigr)\lambda^{-\alpha}-b}{c}\Bigl(\int^\infty_0r_\lambda(t,s)\rd s-1\Bigr)x_0\\
	&\hspace{1cm}+\frac{\sigma}{c}\int^t_0\Bigl\{\int^\infty_0r_\lambda(t-\theta,s)g_\lambda(s)\rd s-g_\lambda(t-\theta)\Bigr\}\rd W(\theta),\ t\geq0.
\end{align*}
Therefore, by \cref{LQ: lemm_OC2}, the control process $\hat{u}(\cdot):=\cT^{-1}[\hat{v}](\cdot)\in\cU_{-\mu}$ is optimal on the control space $\cU_{-\mu}$, and thus it is optimal on the larger control space $\cU_{-\lambda/2}$ too (see \cref{SVIE: rem_large}). Furthermore, by the definition of the bijective map $\cT$, together with the relation $\cX^{\hat{v}}(\cdot)=X^{\hat{u}}(\cdot)=:\hat{X}(\cdot)$, we obtain the following Gaussian state-feedback representation formula:
\begin{align*}
	\hat{u}(t)&=\cT^{-1}[\hat{v}](t)=-\frac{b}{c}\cX^{\hat{v}}(t-\delta)+\hat{v}(t)\\
	&=-\frac{b}{c}\hat{X}(t-\delta)+\frac{\bigl(c^2\gamma+b^2e^{-\lambda\delta}\bigr)\lambda^{-\alpha}-b}{c}\Bigl(\int^\infty_0r_\lambda(t,s)\rd s-1\Bigr)x_0\\
	&\hspace{1cm}+\frac{\sigma}{c}\int^t_0\Bigl\{\int^\infty_0r_\lambda(t-\theta,s)g_\lambda(s)\rd s-g_\lambda(t-\theta)\Bigr\}\rd W(\theta),\ t\geq0.
\end{align*}
The uniqueness of the optimal control follows from \cref{LQ: lemm_unique}, and thus we complete the proof.
\end{proof}


\begin{rem}\label{LQ: rem_Gaussian}
It is easy to see that the optimal state $\hat{X}(\cdot)$ is a Gaussian process. Indeed, under the notations in the above proof, we see that
\begin{align*}
	\hat{X}(t)&=\cX^{\hat{v}}(t)=x_0+\frac{c}{\Gamma(\alpha)}\int^t_0(t-s)^{\alpha-1}\hat{v}(s)\rd s+\frac{\sigma}{\Gamma(\alpha)}\int^t_0(t-s)^{\alpha-1}\rd W(s)\\
	&=x_0+\frac{c}{\Gamma(\alpha)}\int^t_0(t-s)^{\alpha-1}\Bigl\{\hat{\varphi}(s)+\int^s_0\hat{\psi}(s-\theta)\rd W(\theta)\Bigr\}\rd s+\frac{\sigma}{\Gamma(\alpha)}\int^t_0(t-s)^{\alpha-1}\rd W(s)\\
	&=x_0+\frac{c}{\Gamma(\alpha)}\int^t_0(t-s)^{\alpha-1}\hat{\varphi}(s)\rd s+\frac{1}{\Gamma(\alpha)}\int^t_0\Bigl\{c\int^t_\theta(t-s)^{\alpha-1}\hat{\psi}(s-\theta)\rd s+\sigma(t-\theta)^{\alpha-1}\Bigr\}\rd W(\theta)\\
	&=x_0+\frac{c}{\Gamma(\alpha)}\int^t_0(t-s)^{\alpha-1}\hat{\varphi}(s)\rd s+\frac{1}{\Gamma(\alpha)}\int^t_0\Bigl\{c\int^{t-\theta}_0(t-\theta-s)^{\alpha-1}\hat{\psi}(s)\rd s+\sigma(t-\theta)^{\alpha-1}\Bigr\}\rd W(\theta),
\end{align*}
where, in the third line, we used the stochastic Fubini's theorem. Note that $\hat{X}(\cdot)$ is of the form of a stochastic convolution.
\end{rem}


\begin{rem}\label{LQ: rem_summary}
The above method deriving the Gaussian state-feedback representation formula relies on the special structure of the infinite horizon LQ regulator problem \eqref{LQ: state}--\eqref{LQ: cost}. Let us summarize the above arguments and suggest further generalizations.
\begin{itemize}
\item[(i)]
The first step (\cref{LQ: lemm_OC1}) is based on the necessary and sufficient maximum principles (see \cref{SDVIE: theo_MP}). An idea here is to get rid of the adjoint variable $\hat{Y}(\cdot)$ so that the optimality condition is rewritten in terms of $\hat{u}(\cdot)$ and $\hat{X}(\cdot)$ only. This can be done since the diffusion term does not depend on the state or control, since the control set $U$ is the whole real line, and since the SDVIE~\eqref{LQ: state'} is of the convolution type.
\item[(ii)]
The second step (\cref{LQ: lemm_OC2}) is to consider the bijective transformation $\hat{v}(\cdot):=\cT[\hat{u}](\cdot)$ of $\hat{u}(\cdot)$ so that the state variable $\hat{X}(\cdot)$ in the above optimality condition vanishes. As a result, we get the stochastic Fredholm integral equation \eqref{LQ: SFIE} for $\hat{v}(\cdot)$, which characterizes the optimality of $\hat{u}(\cdot)=\cT^{-1}[\hat{v}](\cdot)$. In this step, the infinite horizon framework makes some computations simple. However, the finite horizon case can be treated by the same manner.
\item[(iii)]
The third step (\cref{LQ: lemm_FIE}) is to divide the stochastic Fredholm integral equation~\eqref{LQ: SFIE} into two deterministic Fredholm integral equations \eqref{LQ: DFIE_phi} and \eqref{LQ: DFIE_psi} which correspond to the ``expectation part'' and the ``martingale integrand part'' of $\hat{v}(\cdot)$, respectively. Here, the special parametrization of the martingale integrand of $\hat{v}(\cdot)$ (see \cref{LQ: lemm_MR}) plays a crucial role. We can easily treat this parametrization since it is defined on the infinite horizon. In the finite horizon case, we have to be careful for the treatment of the domain of the two time-parameters. Also, we note that one of the main reason why deterministic equations appear is that all the coefficients of the problem are deterministic.
\item[(iv)]
The last step (\cref{LQ: lemm_resolvent}) is to solve the linear deterministic Fredholm integral equations \eqref{LQ: DFIE_phi} and \eqref{LQ: DFIE_psi} by means of the Fredholm resolvent. This step is based on the well-established general theory in the textbook \cite{GrLoSt90}.
\end{itemize}
We note that, unlike the related papers \cite{BoCoMa12,CoMa11,Ma19,AbMiPh19}, the complete monotonicity of the fractional kernel $\tau^{\alpha-1}$ seems to be not necessary in the above arguments. Also, the multi-dimensional case can be treated by the same manner. Therefore, we conjecture that if the diffusion term does not depend on the state or control, multi-dimensional linear SDVIEs with (not necessarily completely monotone) convolution kernels can be treated by the same manner as above. We hope to report some relevant results for more general cases in the near future.
\end{rem}

\section*{Acknowledgments}
The author was supported by JSPS KAKENHI Grant Number 21J00460 and also partly by JSPS KAKENHI Grant Number 22K13958.


\end{document}